\newtheorem{define}{Definition}[section]
\newtheorem{thm}[define]{Theorem}
\newtheorem{prop}[define]{Proposition}
\newtheorem{rem}[define]{Remark}
\newtheorem{cor}[define]{Corollary}
\newtheorem{lem}[define]{Lemma}
\newtheorem{conj}[define]{Conjecture}
\def\CH{{\rm CH}}
\def\Z{{\mathbb{Z}}}
\def\e{{\acute{e}}}
\def\nr{\mathrm{nr}}
\def\dim{{\mathrm{dim}}}
\def\codim{{\mathrm{codim}}}
\def\P{{\mathbf{P}}}
\def\C{{\mathbb{C}}}
\def\G{{\mathbb{G}}}
\def\Q{{\mathbb{Q}}}
\def\ker{{\mathrm{ker}}}
\def\im{{\mathrm{im}}}
\def\id{{\mathrm{id}}}
\def\cl{{\mathrm{cl}}}
\newcommand{\nospacepunct}[1]{\makebox[0pt][l]{\,#1}}
\keywords{Refined unramified cohomology, Rost's cycle modules}
\date{}
\title[On the functoriality of refined unramified cohomology]{On the functoriality of refined unramified cohomology}
\begin{document}
\author{Kees Kok}
\address{\parbox{0.9\textwidth}{KdV Institute for Mathematics, University of Amsterdam, Netherlands}}
\email{k.kok@uva.nl}
\author{Lin Zhou}
\address{\parbox{0.9\textwidth}{Institute of Algebraic Geometry, Leibniz University Hannover, Hannover, Germany.}}
\email{{zhou@math.uni-hannover.de}}
\maketitle

\begin{abstract}
In this paper, we generalise the construction of the functorial pullback of refined unramified cohomology between smooth schemes, by following the ideas of Fulton's intersection theory and Rost's cycle modules. We also define standard actions of algebraic cycles on the refined unramified cohomology groups of smooth proper schemes avoiding Chow's moving lemma, which coincide with Schreieder's constructions for smooth projective schemes. 
As applications, we prove the projective bundle and blow-up formulas for refined unramified cohomology groups and we reduce the Rost nilpotence principle in characteristic zero to statements concerning certain refined unramified cohomology groups. Moreover, we compute the refined unramified cohomology for smooth projective linear varieties and show that Rost's nilpotence principle holds for these varieties in characteristic zero.
\end{abstract}

\tableofcontents

\section{Introduction}
In \cite{schreieder2020refined}, Schreieder defines the refined unramified cohomology, which can be viewed as an intermediate between cohomology groups and algebraic cycles. Due to the nature of the definition, having a functorial pull-back between these groups is not direct. However, for smooth schemes, the (co)homology groups and Chow groups have natural pull-back maps. So it is expected that refined unramified cohomology should also have natural pull-back maps in this case. Using Chow's moving lemma, Schreieder constructs these maps for smooth quasi-projective schemes which admit smooth compactifications in \cite[Corollary 6.9]{schreieder2022moving}. In \Cref{section general pullbacks} of this paper, we propose a different construction, avoiding the moving lemma, that works for general smooth schemes, which is inspired by the construction of the pull-back map on Chow groups by Fulton in \cite{fulton2013intersection} and cycle modules by Rost in \cite{Ros96}.
\begin{thm}[{cf.~\Cref{lem-the same pullback} and \Cref{thm-functoriality}}]
Let $f\colon X\to Y$ be a morphism between smooth schemes, there is a functorial pull-back map $f^\ast\colon H^p_{q,\nr}(Y,n)\to H^p_{q,\nr}(X,n)$, which coincides with Schreieder's construction in \cite[Corollary 6.9]{schreieder2022moving} when $X$ and $Y$ are smooth quasi-projective schemes admitting a smooth compactification respectively.
\end{thm}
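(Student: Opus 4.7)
The plan is to mimic Fulton's construction of the pullback on Chow groups by factoring an arbitrary morphism $f\colon X\to Y$ of smooth schemes through its graph,
\[
X\xrightarrow{\;\Gamma_f\;} X\times Y\xrightarrow{\;p_2\;} Y,
\]
in which $\Gamma_f$ is a regular closed immersion (because $X$ is smooth) and $p_2$ is smooth. Given a flat pullback along smooth morphisms and a Gysin pullback along regular embeddings of smooth subschemes, I would set $f^\ast:=\Gamma_f^\ast\circ p_2^\ast$. The flat pullback $p_2^\ast$ is essentially formal, since smooth base change preserves the coniveau filtration underlying $H^p_{q,\nr}$, and I expect it to have been established earlier in \Cref{section general pullbacks}.

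The technical heart is the Gysin pullback $i^\ast\colon H^p_{q,\nr}(W,n)\to H^p_{q,\nr}(Z,n)$ for a regular closed embedding $i\colon Z\hookrightarrow W$ of smooth schemes. Following Rost's treatment of cycle modules, I would use the deformation to the normal cone: taking the standard family $M\to\A^1$ with generic fibre $W$ and special fibre the normal bundle $N_{Z/W}$, one defines $i^\ast$ as the composition of the specialisation $H^p_{q,\nr}(W,n)\to H^p_{q,\nr}(N_{Z/W},n)$ with the inverse of the zero-section isomorphism $H^p_{q,\nr}(Z,n)\xrightarrow{\sim}H^p_{q,\nr}(N_{Z/W},n)$ coming from homotopy invariance for vector bundles. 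Both ingredients should be available from earlier sections as formal consequences of the localisation sequence.

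The principal obstacle is functoriality: for $X\xrightarrow{f}Y\xrightarrow{g}Z$, I must prove $(g\circ f)^\ast=f^\ast\circ g^\ast$. Unwinding the two graph factorisations reduces this to three compatibilities: composition of smooth pullbacks (formal), composition of Gysin pullbacks along regular embeddings, and, crucially, a base-change formula for a Cartesian square with one vertical arrow a regular embedding and the other smooth. Fulton proves the base-change formula for Chow groups via a \emph{double} deformation to the normal cone, and I would transcribe that argument to $H^p_{q,\nr}$; the nontrivial input needed is that specialisation commutes with smooth pullback, which should follow from the functoriality of the localisation sequence. Independence of $f^\ast$ from the graph factorisation is a consequence of the same base-change compatibility, applied to the identity morphism.

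Finally, for the coincidence with Schreieder's construction in \cite[Corollary 6.9]{schreieder2022moving}, I would argue by uniqueness. Both constructions agree on smooth pullbacks, both are functorial, and both are compatible with the cycle class map from $\CH^\ast$. Schreieder's pullback via the moving lemma amounts, after moving a representative into transversal position, to set-theoretic restriction along a regular embedding of smooth schemes, which coincides with the Gysin pullback obtained by deformation to the normal cone (the classical self-intersection comparison, lifted from $\CH$ to $H^p_{q,\nr}$). Combining these two observations with the graph factorisation shows the two pullbacks agree whenever both are defined.
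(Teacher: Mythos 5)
Your proposal follows essentially the same route as the paper: graph factorisation into a regular embedding followed by a smooth projection, a Gysin map for the embedding defined by deformation to the normal cone (specialisation composed with the inverse of homotopy invariance for the normal bundle), and functoriality via Rost/Fulton-style base-change and double-deformation lemmas. The one divergence is your framing of the comparison with Schreieder's construction as a ``uniqueness'' argument --- no such uniqueness theorem is invoked in the paper (nor is one readily available); what the paper actually does, and what your parenthetical remark correctly identifies as the real content, is a direct diagram check that on a suitable open $U\supset F_{q+1}Y$ with $F_{q+1}X\subset f^{-1}(U)$ the specialisation of $\{t\}\cup\pi^*p^*\alpha$ to the normal bundle equals the ordinary restriction $(f|_{f^{-1}(U)})^*\alpha$ after applying homotopy invariance.
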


Having this pull-back map, we are in line to construct a correspondence action on the refined unramified cohomology groups, generalising the construction given in \cite[Corollary 6.8]{schreieder2022moving}. We do this by first constructing an external product in \Cref{subsection Cross Product}. Using this, together with the above pull-back map, we can directly define a correspondence action for smooth proper schemes.
\begin{thm}[cf.~\Cref{prop-actions of corr}, \Cref{rem-compatible action}]\label{thm-bi-additive pairing}
Let $X$, $Y$ and $Z$ be smooth proper equi-dimenisonal $k$-schemes of dimension $d_X$, $d_Y$ and $d_Z$ respectively. Then for any $c,p,q\geq 0$, there is a bi-additive pairing  
\begin{eqnarray*}
    \CH^c(X\times Y)\times H^p_{q,\nr}(Y\times Z,n)&\to& H^{p+2c-2d_Y}_{q+c-d_Y,\nr}(X\times Z,n+c-d_Y)\\
    ([\Gamma],[\alpha])&\mapsto&[\Gamma]_*[\alpha],
\end{eqnarray*}
which is compatible with the composition of correspondences. Moreover, it is compatible with the bi-additive pairing on cycle modules via the long exact sequence
\begin{eqnarray*}
    \to H^{p+q-1}_{p-2,\nr}(Y\times Z,n)\to E_2^{p,q}(Y\times Z,n)\to H^{p+q}_{p,\nr}(Y\times Z,n)\to H^{p+q}_{p-1,\nr}(Y\times Z,n)\to,
\end{eqnarray*}
constructed in \cite[\textsection 7.11]{schreieder2020refined}.
\end{thm}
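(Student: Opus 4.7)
The plan is to imitate the classical construction of correspondence actions, replacing each ingredient with its refined-unramified analogue. Given $[\Gamma]\in\CH^c(X\times Y)$ and $[\alpha]\in H^p_{q,\nr}(Y\times Z,n)$, I would define
\[
[\Gamma]_\ast[\alpha] \;:=\; (p_{XZ})_\ast\,(\id_X\times\Delta_Y\times\id_Z)^\ast\bigl(\cl_{X\times Y}(\Gamma)\times\alpha\bigr),
\]
where $\cl\colon\CH^c(-)\to H^{2c}_{c,\nr}(-,c)$ is the cycle class map, $\times$ is the external product from \Cref{subsection Cross Product}, the pull-back along $\id_X\times\Delta_Y\times\id_Z\colon X\times Y\times Z\hookrightarrow X\times Y\times Y\times Z$ is supplied by the preceding theorem (legitimate since all schemes in sight are smooth), and the proper push-forward along $p_{XZ}\colon X\times Y\times Z\to X\times Z$ exists because $Y$ is proper. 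A direct index count yields the target group stated, and bi-additivity is inherited from the bi-additivity of each ingredient.

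The substantive part of the proof is compatibility with composition of correspondences. For a further smooth proper variety $W$ and $[\Gamma']\in\CH^{c'}(W\times X)$, I would verify the equality $([\Gamma]\circ[\Gamma'])_\ast[\alpha] = [\Gamma']_\ast\bigl([\Gamma]_\ast[\alpha]\bigr)$ by expanding both sides on the appropriate product scheme and matching them via three formal identities: associativity and partial symmetry of the external product; base change $f^\ast g_\ast = g'_\ast f'^\ast$ in Cartesian squares with $g$ proper; and the projection formula $g_\ast(g^\ast\beta\cdot\alpha)=\beta\cdot g_\ast\alpha$ in its cross-product form. Each of these must first be established for refined unramified cohomology; my strategy is to reduce them to the corresponding identities for Rost's cycle modules in \cite{Ros96} via the long exact sequence relating $H^{\ast}_{\ast,\nr}$ and the cycle-module cohomology $E_2^{\ast,\ast}$ from \cite[\S7.11]{schreieder2020refined}, exploiting that pull-back, cross product, and push-forward all respect the coniveau-type filtration defining $H^{p}_{q,\nr}$.

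Compatibility with the bi-additive pairing on cycle modules then reduces to checking that each of the four ingredients commutes with the connecting maps in the same long exact sequence, so that on passing to $E_2^{\ast,\ast}$ the composite $[\Gamma]_\ast$ recovers Rost's pairing. The main obstacle I anticipate is establishing the projection and base-change formulas at the refined level: for Chow groups and ordinary cohomology these are classical, but in $H^{p}_{q,\nr}$ the push-forward must interact delicately with the coniveau filtration, and this will likely require either a direct filtration-wise computation or a careful reduction to Rost's formulas via the long exact sequence. Once these two formulas are in place, both the composition compatibility and the cycle-module compatibility should follow by formal diagram chases.
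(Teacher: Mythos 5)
Your overall architecture coincides with the paper's: the action is defined as $(p_{XZ})_\ast\circ(\id_X\times\Delta_Y\times\id_Z)^\ast\circ(\Gamma\times-)$, bi-additivity is inherited from the ingredients, compatibility with composition reduces to a projection formula, base change, and associativity of the cross product, and compatibility with the cycle-module pairing reduces to checking that each ingredient commutes with the long exact sequence (which the paper does in \Cref{prop-exact sequence is compatible with pullback} and \Cref{prop-exact sequence is compatible with cross product}). One small correction: the cross product of \Cref{subsection Cross Product} is a pairing $\CH^c(-)\otimes H^p_{q,\nr}(-)\to H^{p+2c}_{q+c,\nr}(-\times-)$ taking a Chow class in the first slot; the paper never defines an external product of two refined unramified classes, so you should feed $\Gamma$ itself into the pairing rather than $\cl_{X\times Y}(\Gamma)$.

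The genuine gap is in your plan to establish the projection formula, base change, and the compatibility of the cross product with pull-back and push-forward by ``reduction to the corresponding identities for Rost's cycle modules via the long exact sequence.'' That reduction cannot work as stated: the long exact sequence only tells you that if two maps out of $H^{p+q}_{p,\nr}(Y\times Z,n)$ agree after restriction to $H^{p+q}_{p-1,\nr}$, then their difference on a given class lies in the image of $\iota_\ast$ from $E_2^{p,q}$ --- it does not force the difference to vanish, so agreement on the $E_2$-page plus compatibility with the sequence does not yield an identity of maps on $H^{p}_{q,\nr}$ (and attempting a five-lemma induction is circular, since the outer terms are again refined unramified groups on which the identity is not yet known). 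The paper instead proves these identities \emph{directly} at the level of refined unramified cohomology: \Cref{prop-composition-cross}, \Cref{lem-cross product for proper push}, \Cref{lem-cross product for pull back} and \Cref{lem-projection formual for refined} all work with an explicit representative $\alpha\in H^i(U,n)$ on an open $U$ whose complement has codimension $q+2$, and chase large diagrams of cohomology with supports using excision, cup-product compatibilities, and (for the pull-back) the deformation to the normal cone, following Rost's \emph{proofs} rather than his statements. The relationship to the cycle-module identities goes in the opposite direction from what you propose: one proves the refined identities directly and then checks \emph{afterwards} that they are compatible with the $E_2$-page via the long exact sequence, which is exactly the ``moreover'' clause of the theorem.
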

We refer to \Cref{lem-concrete description of corr actions} to see that when taking $Z$ to be a point Spec$(k)$ and $X$ and $Y$ smooth projective equi-dimensional schemes, we obtain the same definition of the correspondence action as in \cite[Corollary 6.8]{schreieder2022moving}. Moreover, we define the action of cycles on refined unramified cohomology in \Cref{def-action of cycles on refined unramified cohomology}, satisfying projection formulas similar to Chow groups.
\begin{lem}[= \Cref{lem-projection formula II}]
   Let $f\colon X\to Y$ be a morphism between smooth equi-dimensional algebraic schemes. For any $\Gamma \in \CH^c(Y)$ and $\alpha\in H^i_{j,\nr}(Y,n)$, we then have $f^*(\Gamma\cdot\alpha)=(f^*\Gamma)\cdot(f^*\alpha)$. Moreover, if $f$ is proper, then we have the projection formulas:
   \begin{itemize}
       \item[(i)] for any $\Gamma\in \CH^c(X)$ and $\alpha\in H^i_{j,\nr}(Y,n)$, we have $f_*(\Gamma\cdot f^*(\alpha))=f_*(\Gamma)\cdot\alpha$;
       \item[(ii)] for any $\Gamma\in \CH^c(Y)$ and $\alpha\in H^i_{j,\nr}(X,n)$, we have $f_*(f^*(\Gamma)\cdot \alpha)=\Gamma\cdot f_*(\alpha)$. 
   \end{itemize}  
\end{lem}

As a direct consequence of the natural correspondence action above, we obtain a projective bundle and blow-up formula for the refined unramified cohomology groups in \Cref{application formulas}.
\begin{prop}[cf.~\Cref{prop-projective bundle formula}]
Let $X$ be a smooth scheme and $E\to X$ be a vector bundle of rank $r+1$. There is a canonical isomorphism $ \bigoplus_{k=0}^{r} H^{p-2k}_{q-k,\nr}(X,n-k)\xrightarrow{\sim}H^p_{q,\nr}(\P(E),n)$.
\end{prop}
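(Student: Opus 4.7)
The natural candidate is the map
\begin{align*}
\Phi\colon \bigoplus_{k=0}^{r} H^{p-2k}_{q-k,\nr}(X,n-k) &\to H^p_{q,\nr}(\P(E),n), \\
(\alpha_k)_{k=0}^{r} &\mapsto \sum_{k=0}^{r} h^k\cdot \pi^*\alpha_k,
\end{align*}
where $\pi\colon \P(E)\to X$ is the structural morphism, $h=c_1(\mathcal{O}_{\P(E)}(1))\in \CH^1(\P(E))$ is the tautological class, $\pi^*$ is the pull-back constructed in Theorem~1.1, and $h^k\cdot(-)$ denotes the cycle action from Lemma~1.3. The indices match up: $\pi^*$ preserves all three degrees and intersection with $h^k\in \CH^k(\P(E))$ shifts them by $(2k,k,k)$, so the image lies in $H^p_{q,\nr}(\P(E),n)$.

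To verify that $\Phi$ is an isomorphism, the plan is to run a five-lemma argument on the long exact sequence from Theorem~1.2. Fixing the cohomological degree $i=p+q$, this sequence expresses $H^i_{j,\nr}$ as an iterated extension of $H^i_{j-1,\nr}$ by the $E_2^{j,i-j}$-term, which is the cohomology of a Rost cycle module complex. Since the pairing of Theorem~1.2 is compatible with the long exact sequence and since both $\pi^*$ and the action of $h^k$ commute with the boundary maps (by their construction from the pull-back and the external product), the direct sum of the shifted long exact sequences on $X$ maps into the long exact sequence on $\P(E)$, and $\Phi$ is a morphism of these filtered data. Inducting on $j$ with base case $j<0$ where all groups vanish, the five-lemma reduces the isomorphism statement to the corresponding projective bundle formula for $E_2^{j,i-j}$, which is Rost's projective bundle formula for cycle modules \cite[\S14]{Ros96}.

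The main obstacle will be checking the compatibility of $\Phi$ with the structure maps of the long exact sequence in a way that identifies the induced map on associated graded pieces with Rost's projective bundle isomorphism. Concretely, one must show that passing the correspondence action of $h^k$ on $H^{p+q}_{p,\nr}(\P(E),n)$ through the natural comparison morphism to $E_2^{p,q}(\P(E),n)$ recovers the cycle module multiplication by $h^k$; this should follow from the explicit description of the action in Theorem~1.2 together with the compatibility of the cross product from Section~\ref{subsection Cross Product} with Rost's external pairing on cycle modules. Once this compatibility is in place, the remaining content is bookkeeping of indices and invocation of Rost's theorem.
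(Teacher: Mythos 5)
Your proof is correct and is essentially the paper's own argument: the same map $(\alpha_k)_k\mapsto\sum_k h^k\cdot\pi^*\alpha_k$, the same five-lemma reduction along the long exact sequence (the compatibility you flag as the main obstacle is exactly what \Cref{prop-exact sequence is compatible with pullback} and \Cref{prop-exact sequence is compatible with cross product} supply), and the same reduction to the projective bundle formula on the $E_2$-page, for which the paper invokes Viale's result for Gersten-type cohomology rather than Rost directly. The only divergence is the anchoring of the induction: the paper descends from $q+1\geq\dim\P(E)$, where refined unramified cohomology agrees with ordinary cohomology and the classical formula applies, whereas you ascend from $q<0$, where all groups vanish; both five-term windows of the sequence make the induction work.
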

\begin{prop}[cf.~\Cref{prop-blow-up bundle}]
Let $Y\subset X$ be a closed embedding of smooth schemes with codimension $r+1$ and let $\tau\colon\tilde X\to X$ be the blow-up of $X$ with centre $Y$. There is a canonical isomorphism
$H^p_{q,\nr}(X,n)\oplus\bigoplus_{k=0}^{r-1}H^{p-2k-2}_{q-k-1,\nr}(Y,n-k-1)\xrightarrow{\sim}H^p_{q,\nr}(\tilde X,n)$.  
\end{prop}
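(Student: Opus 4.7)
The approach follows Manin's classical proof of the motivic blow-up decomposition, adapted to refined unramified cohomology via the functorial operations of the previous sections. Write $j\colon E\hookrightarrow \tilde X$ for the inclusion of the exceptional divisor, $\pi\colon E\to Y$ for its projection, and $i\colon Y\hookrightarrow X$ for the original embedding. Since $Y\subset X$ is regular of codimension $r+1$, there is a canonical identification $E\cong \mathbf{P}(N_{Y/X})$, so that $\pi$ is a $\mathbf{P}^r$-bundle; set $h=c_1(\mathcal{O}_E(1))\in \CH^1(E)$.

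As the candidate isomorphism I would take
\begin{equation*}
\Phi(\alpha,\beta_0,\ldots,\beta_{r-1}) := \tau^*\alpha + \sum_{k=0}^{r-1} j_*\bigl(h^k\cdot \pi^*\beta_k\bigr),
\end{equation*}
assembled from the functorial pull-back of \Cref{thm-functoriality}, the proper push-forwards, and the cycle action of \Cref{def-action of cycles on refined unramified cohomology}. Its candidate left inverse $\Psi$ has first component $\tau_*$, while its remaining components extract $-\gamma_{k+1}$ for $k=0,\ldots,r-1$ in the unique decomposition $j^*\xi=\sum_{k=0}^{r} h^k\cdot \pi^*\gamma_k$ afforded by \Cref{prop-projective bundle formula} applied to $\pi$.

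Checking $\Psi\circ\Phi=\mathrm{id}$ reduces via the projection formula (\Cref{lem-projection formula II}) to two standard inputs. For the $X$-component, the base-change identity $\tau_*j_*=i_*\pi_*$ combined with the projective-bundle computation $\pi_*(h^k)=0$ for $k<r$ kills every $j_*$-term and leaves $\tau_*\tau^*\alpha=\alpha$. For the $Y$-components, the compatibilities $j^*\tau^*=\pi^*i^*$ and the self-intersection formula $j^*j_*\eta=-h\cdot\eta$ (from $\mathcal{O}_{\tilde X}(E)|_E\cong \mathcal{O}_E(-1)$) yield
\begin{equation*}
j^*\Phi(\alpha,\beta_0,\ldots,\beta_{r-1}) = \pi^*i^*\alpha - \sum_{k=0}^{r-1} h^{k+1}\cdot\pi^*\beta_k,
\end{equation*}
so the uniqueness clause of the projective-bundle decomposition recovers each $\beta_k$, giving injectivity of $\Phi$.

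Surjectivity is the genuine obstacle. Given $\xi\in H^p_{q,\nr}(\tilde X,n)$, set $\alpha:=\tau_*\xi$ and observe that $\xi-\tau^*\alpha$ restricts to zero on $\tilde X\setminus E\cong X\setminus Y$, so a Gysin-type long exact sequence for $E\hookrightarrow\tilde X$ exhibits $\xi-\tau^*\alpha=j_*\eta$ for some class $\eta$, which by \Cref{prop-projective bundle formula} we may write as $\eta=\sum_{k=0}^{r}h^k\cdot\pi^*\gamma_k$. To absorb the top term $j_*(h^r\cdot\pi^*\gamma_r)$ one invokes Fulton's excess intersection formula
\begin{equation*}
\tau^*i_*\beta = j_*\bigl(c_r(Q)\cdot\pi^*\beta\bigr), \qquad Q:=\pi^*N_{Y/X}\big/\mathcal{O}_E(-1),
\end{equation*}
which, after expanding $c_r(Q)=h^r+\sum_{k=0}^{r-1}\pi^*c_{r-k}(N_{Y/X})\cdot h^k$, rewrites $j_*(h^r\cdot\pi^*\gamma_r)$ as $\tau^*i_*\gamma_r$ minus $j_*\bigl(h^k\cdot\pi^*(\,\cdot\,)\bigr)$-terms with $k\le r-1$, thereby placing $\xi$ in the image of $\Phi$. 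The main technical burden is thus to verify this excess intersection formula in refined unramified cohomology; I expect it to follow from deformation to the normal cone together with the compatibility of refined pull-back and Gysin maps developed earlier in the paper and the projection formula, but tracking the sign and shift conventions across the various functorial operations is the principal subtlety.
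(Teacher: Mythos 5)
Your candidate isomorphism $\Phi$ is literally the map appearing in the statement of \Cref{prop-blow-up bundle}, but your proof strategy is entirely different from the paper's, and as written it contains a genuine gap. The paper does not run Manin's argument at all: it proves the blow-up formula by the same soft mechanism as \Cref{prop-projective bundle formula}, comparing the long exact sequence (\ref{equ-the canonical long exact sequence}) for $\tilde X$ with the direct sum of the corresponding sequences for $X$ and $Y$ (the comparison maps commute with $\partial$ and $\iota_*$ by \Cref{prop-exact sequence is compatible with pullback} and \Cref{prop-exact sequence is compatible with cross product}), importing the blow-up isomorphism on the $E_2$-terms from Viale's result on cycle-module cohomology (the reference invoked in the proof of \Cref{prop-projective bundle formula}) and on ordinary cohomology from the classical theory (which is the case $q\gg 0$, where refined unramified cohomology coincides with ordinary cohomology), and then descending in $q$ by the five lemma. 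No self-intersection or excess-intersection formula is ever needed on that route.

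The gap in your argument is exactly the point you defer at the end. Both halves of your proof rest on identities that are established neither in the paper nor in your proposal and that are not formal in this theory: your verification of $\Psi\circ\Phi=\mathrm{id}$ (hence injectivity) needs the divisor self-intersection formula $j^*j_*\eta=-h\cdot\eta$ for $E\subset\tilde X$, and your surjectivity argument needs the key formula $\tau^*i_*\beta=j_*\bigl(c_r(Q)\cdot\pi^*\beta\bigr)$. In Chow groups these are the substance of Fulton's Chapter 6; here they would have to be re-proved for the pull-back of \Cref{thm-functoriality}, which is defined through the deformation space $D(\tilde X,E)$, against the Gysin push-forward $j_*$. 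That requires a compatibility of the operator $J(\cdot,\cdot)$ with proper push-forward along closed immersions — a statement of comparable depth to \Cref{lem-lem11.7} — together with a careful sign and twist analysis, none of which is available in the paper. Asserting that you ``expect it to follow from deformation to the normal cone'' leaves precisely the hard content unproved, so both injectivity and surjectivity are incomplete as written. The rest of your reductions (the use of $\tau_*j_*=i_*\pi_*$, $j^*\tau^*=\pi^*i^*$, $\pi_*(h^k\cdot\pi^*\beta)=\pi_*(h^k)\cdot\beta$ via \Cref{lem-projection formula II}, and the localization sequence to write $\xi-\tau^*\tau_*\xi=j_*\eta$) are sound and would assemble into a correct, more explicit proof once those two formulas are supplied; but with only the tools actually developed in the paper, the five-lemma argument is the one that closes.
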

As a second application, we reduce Rost's nilpotence principle  \Cref{conj-Rost} (RNP for short) in characteristic zero to the similar statement concerning the refined unramified cohomology groups in \Cref{application RNP}. The RNP for smooth projective quadrics (see \cite{rost1998motive}) plays an important role in  Voevodsky’s proof of the Milnor conjecture \cite{voevodsky2011motivic}. For perfect fields with $\dim(X)\leq 2$, this is proven by \cite{rosenschon2018rost} and \cite{diaz2019rost}, which generalise the works of Gille \cite{gille2010rost,gille2014chow}. Also RNP is known for isotropic projective homogeneous varieties for a semisimple algebraic group \cite{MR2110630} and for three-dimensional smooth projective geometrically integral schemes over a field of characteristic zero which are birationally isomorphic to a toric model \cite{gille2018rost}. However, RNP is wide open for $\dim(X)\geq 3$.

\begin{prop}[= \Cref{prop-RNP}]
   Let $X$ be a smooth projective equi-dimensional $d$-dimensional scheme defined over a field $k$ of characteristic zero. Let $\Gamma\in {\rm End}_k(X)$ such that $\Gamma_E=0$ for some finite Galois field extension $E/k$. The following are equivalent:
    \begin{itemize}
        \item[(1)] $\Gamma$ is nilpotent as a correspondence;
        \item[(2)] the action of the correspondence $\Gamma$ is nilpotent on the refined unramified cohomology groups $H^{2d-2}_{d-3,\nr}(X\times _kX,\Q_{\ell}/\Z_{\ell}(d))$ for any prime $\ell$, and the nilpotence index of $[\Gamma]_*$ on $H^{2d-2}_{d-3,\nr}(X\times _kX,\Q_{\ell}/\Z_{\ell}(d))$ has a common upper bound for all $\ell$;
        \item[(3)] the action of the correspondence $\Gamma$ is nilpotent on the refined unramified cohomology groups $H^{2d-1}_{d-2,\nr}(X\times _kX,\Z_{\ell}(d))$ for any prime $\ell$, and the nilpotence index of $[\Gamma]_*$ on $H^{2d-1}_{d-2,\nr}(X\times _kX,\Z_{\ell}(d))$ has a common upper bound for all $\ell$.
      \end{itemize}
Here $H^i(*,\Z_{\ell}(n))$ is Jannsen's continuous \'etale cohomology (see \cite{jannsen1988continuous}) with coefficients $(\mu_{\ell^{r}}^{\otimes  n})_r$.
\end{prop}
Finally, we compute the refined unramified cohomology for smooth projective linear varieties in \Cref{section-linear vars}.
\begin{prop}[cf.~\Cref{prop-surjective}, \Cref{lem-computation for algebraically closed field}]
    Let $X$ be a smooth projective linear variety defined over a field $k$. Then the restriction map $H^{p}(X,M(b))\to H^{p}_{q,\nr}(X,M(b))$ is surjective for all $p,q\geq 0$ and all $b$, where  $m$ is invertible in $k$ and $M\coloneqq \Z/m$. In particular, RNP holds for linear varieties over characteristic zero.
    
    In addition, if $k$ is an algebraically closed field 
   \begin{equation*}
  H^p_{q,\nr}(X,M(b))=\left\{\begin{array}{ll}
      H^p(X,M(b)) & \text{if } p\leq 2q \text{ and } p \text{ is even; } \\
      0 & \text{otherwise.}
  \end{array} \right.
\end{equation*}
In particular, the higher cycle class map $\CH^b(X,n;M)\to H^{2b-n}(X,M(b))$ is an isomorphism for $b,n\geq 0$.
\end{prop}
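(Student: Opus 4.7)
The strategy is to combine the motivic decomposition of smooth projective linear varieties with the correspondence action on refined unramified cohomology built in Theorem~\ref{thm-bi-additive pairing}, reducing every computation to $\mathrm{Spec}\,k$. By the cellular description of smooth projective linear varieties (Totaro, Joshua, Jannsen, \ldots), the Chow motive of $X$ with $M = \Z/\ell^r$-coefficients is a direct sum of Tate twists; equivalently, there is an orthogonal diagonal decomposition
\begin{equation*}
\Delta_X = \sum_{i \in I} [Z_i \times W_i] \in \CH^{d}(X \times X; M), \qquad d = \dim X,
\end{equation*}
with $Z_i, W_i \subset X$ closed, $\dim Z_i + \dim W_i = d$, and satisfying $[Z_i] \cdot [W_j] = \delta_{ij}\, [\mathrm{pt}]$ in $\CH^\ast(X; M)$. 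Setting $n_i := \dim Z_i$, each summand $[Z_i \times W_i]$ is the composition (in the category of correspondences) of $[Z_i] \in \CH^{d - n_i}(\mathrm{Spec}\,k \times X)$ with $[W_i] \in \CH^{n_i}(X \times \mathrm{Spec}\,k)$.

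Theorem~\ref{thm-bi-additive pairing} and its compatibility with composition realize this factorization on refined unramified cohomology:
\begin{equation*}
H^p_{q,\nr}(X, M(m)) \xrightarrow{[Z_i]_\ast} H^{p - 2 n_i}_{q - n_i, \nr}(\mathrm{Spec}\,k, M(m - n_i)) \xrightarrow{[W_i]_\ast} H^p_{q,\nr}(X, M(m)),
\end{equation*}
with summation over $i$ giving the identity. Orthogonality then upgrades this to a canonical direct-sum decomposition
\begin{equation*}
H^p_{q,\nr}(X, M(m)) \;\cong\; \bigoplus_{i \in I} H^{p - 2 n_i}_{q - n_i, \nr}(\mathrm{Spec}\,k, M(m - n_i)).
\end{equation*}
Because $\mathrm{Spec}\,k$ has trivial Gersten filtration, the $i$-th summand equals $H^{p - 2 n_i}(\mathrm{Spec}\,k, M(m - n_i))$ when $q \geq n_i$ and vanishes otherwise.

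All four conclusions then follow. The restriction $H^p(X, M(m)) \to H^p_{q,\nr}(X, M(m))$ respects the decomposition, and on each summand is either an isomorphism or a map into zero, giving surjectivity over every field $k$. When $k$ is algebraically closed, only the degree-zero Galois cohomology of $\mathrm{Spec}\,k$ is nonzero, so only summands with $p = 2 n_i$ and $n_i \leq q$ contribute, which reproduces the explicit formula. The higher cycle class map isomorphism is obtained by applying the same decomposition to higher Chow groups, both sides agreeing at the point by Beilinson--Lichtenbaum. Finally, $X \times X$ is itself a smooth projective linear variety, so the same analysis expresses $H^{2d-2}_{d-3,\nr}(X \times X, \Q_\ell/\Z_\ell(d))$ as a direct sum of Galois cohomology groups of $\mathrm{Spec}\,k$; a correspondence $\Gamma$ with $\Gamma_E = 0$ acts trivially on each summand after base change to $E$, and a restriction--corestriction argument bounds its nilpotence index on the Galois cohomology of $k$ uniformly in $\ell$, so Proposition~\ref{prop-RNP} yields RNP for $X$ in characteristic zero.

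The one substantive ingredient is the existence of the orthogonal diagonal decomposition with torsion coefficients $\Z/\ell^r$; this rests on the cellular structure of smooth projective linear varieties and is the main obstacle. Once it is granted, everything else is a formal consequence of the correspondence-action machinery and reduces to standard Galois-cohomological computations at a point.
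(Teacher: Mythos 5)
Your overall strategy --- factor the identity of $H^p_{q,\nr}(X,M(m))$ through the refined unramified cohomology of $\mathrm{Spec}\,k$ via a decomposition of the diagonal and the correspondence action --- is the same as the paper's. But your argument rests on a substantially stronger input than the paper uses, and that input is exactly the point you leave open: the \emph{orthogonal} decomposition $\Delta_X=\sum_i[Z_i\times W_i]$ with $[Z_i]\cdot[W_j]=\delta_{ij}[\mathrm{pt}]$ in $\CH^*(X;M)$. What Totaro and Joshua actually provide for a smooth projective linear variety over an arbitrary field is only the additive K\"unneth-type decomposition $[\Delta_X]=\sum_{i,j}\alpha_{i,j}\times\beta_{i,j}$ (this is precisely the paper's notion of a complete decomposition of the diagonal); upgrading it to a system of mutually orthogonal idempotents of the form $[Z_i\times W_i]$ requires $\CH^*(X)$ to be free with unimodular intersection pairing modulo $\ell^r$, which is not established here and is a genuine issue over a non-closed field, not a formality. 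Since you yourself flag this as ``the main obstacle,'' the proof as written has a real gap.

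The gap is avoidable, and the paper's proof shows how: orthogonality is never needed. For surjectivity one only needs that each term $(\alpha_{i,j}\times\beta_{i,j})_*x$ individually lies in the image of $H^p(X,M(m))$: for $i>q$ it vanishes because it factors through $H^{p-2i}_{q-i,\nr}(\mathrm{Spec}\,k,M(m-i))=0$ (by \cite[Corollary 6.8(4)]{schreieder2022moving}), and for $i\leq q$ a dimension count on supports lets one arrange that $\mathrm{Supp}(\alpha_{i,j})$ lies in the open set $U$ over which $x$ is defined, so that $(\alpha_{i,j}\times\beta_{i,j})_*x$ extends to all of $X$. For the computation over $\bar k$ one likewise only needs that the identity \emph{factors through} $\bigoplus H^{p-2i}_{q-i,\nr}(\mathrm{Spec}\,k,M(m-i))$, making $H^p_{q,\nr}(X,M(m))$ a direct summand of a group that vanishes unless $p$ is even and $p\leq 2q$; the identification with $H^p(X,M(m))$ in the remaining range is then supplied by \cite[Corollary 5.10]{schreieder2020refined} and \cite[Theorem 1.1]{kok2023higher} rather than by matching summands on both sides. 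Finally, a small but real point in your RNP paragraph: restriction--corestriction only gives $[E:k]\cdot\Gamma_*=0$, not nilpotence; the uniform-in-$\ell$ nilpotence bound on $H^{2d-2}(X\times X,\Q_{\ell}/\Z_{\ell}(d))$ comes from the Hochschild--Serre filtration argument of Rosenschon--Sawant, after which the surjectivity statement transfers the bound to the refined unramified group and \Cref{prop-RNP} applies.
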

\subsection{Acknowledgements}
The authors are very grateful to Stefan Schreieder for his comments on the first draft and for his help in improving Section 4.2. The paper is written during the first author's PhD which was supported by the Dutch Research Council (NWO) Vidi grant 016.Vidi.189.015. The second author has received funding from the European Research Council under the European Union's Horizon 2020 research and innovation program under grant agreement
No 948066 (ERC-StG RationAlgic).
\subsection{Notations and Conventions}
\begin{itemize}
    \item  All schemes are separated, Noetherian and of finite type over a field $k$. 
    \item A variety is an integral scheme.
    \item Generally we denote $N(X,Y)\coloneqq N_{Y}X$ for the normal bundle of a regular embedding $Y\hookrightarrow X$.
    \item We write $\G_m\coloneqq \mathbb{A}^1_k\setminus \{0\}$ for convenience, without  making use of its group structure.
\end{itemize}

\section{Pullback on Refined Unramified Cohomology}
In this section, we show that refined unramified cohomology has functorial pullbacks between smooth schemes and provide concrete descriptions of this pullback under certain specific conditions. First of all, let us recall some elementary facts.
\subsection{Borel--Moore Homology and \'Etale Cohomology with Supports}
Let $k$ be a field, and let $\mathcal{C}$ be a category of $k$-schemes containing all quasi-projective  $k$-schemes such that every closed (or open) immersion is a morphism in $\mathcal{C}$.
\begin{define}[cf. {\cite[Section 2]{BO74}}]\label{def-BM homology and etale cohomology with supports}
Let $\ell$ be a positive integer invertible in $k$, and let $\pi_X\colon X\to {\rm Spec}(k)$ denote the structure map of an object $X\in \mathcal{C}$. Let $Z\subset X$ be a closed subscheme of $X$, and let $\mathcal{F}$ be an $\ell^{\infty}$-torsion  \'etale sheaf on ${\rm Spec}(k)$. We define 
\[
    H_i(X,n)\coloneqq H^{-i}(X_{\e t},\pi^!_X\mathcal{F}(n-d));
\]
\[
    H^i_Z(X,n)\coloneqq H^i_Z(X_{\e t},\pi^*_X\mathcal{F}(n)).
\]
For $Z=X$, we write $H^*(X,n)\coloneqq H^*_X(X,n)$. Moreover, for $k=\C$, let $A$ be an abelian group. Then we define $H_i(X,n)$ $($resp. $H^i_Z(X,n)$ $)$ as the singular homology group $($resp. singular cohomology group with supports$)$ $H_i^{BM}(X(\C),A)$ $($resp. $H^i_{Z(\C)}(X(\C),A)$ $)$.
\end{define}
The group $H^i_Z(X,n)$ is a twisted cohomology with supports in the sense of \cite[Section 1]{BO74}; see also \cite{viale1997ℋ︁}, \cite[Definition 4.3; Definition 4.4]{kerz2012cohomological} and \cite[Section 3]{schreieder2022moving}. In particular, $H_i(X,n)$ is a homology theory with duality $H^*_{Z}(X,n)$.

Let $F_jX\coloneqq\{x\in X|\codim_X(x)\leq j\}$, where $\codim_X(x)\coloneqq\dim(X)-\dim(x)$. We define 
\[
H^i(F_jX,n)\coloneqq\varinjlim_{F_jX\subset U\subset X}H^i(U,n),
\]
where $U$ runs through all open subsets of $X$ containing $F_jX$. Moreover, we define \emph{the $j$-th refined unramified cohomology} of $X$ with values in $A(n)$ by
\begin{equation*}
    H^i_{j,\nr}(X,n)\coloneqq\im (H^i(F_{j+1}X,n)\to H^i(F_jX,n)).
\end{equation*}
Note here that if $X$ is a smooth equi-dimensional algebraic scheme, this definition is the same as \cite[Definition 5.1]{schreieder2020refined} and \cite[Section 2.4]{schreieder2020infinite} making use of \emph{Gabber's purity isomorphism} here (cf.~\cite[Lemma 6.5]{schreieder2020refined}). In particular, we still have a canonical long exact sequence for smooth $X$ by \cite[Proposition 7.35]{schreieder2020refined}.
\begin{prop}[{\cite[Proposition 7.35]{schreieder2020refined}}]\label{prop-schreieder's prop 7.35}
 If $X$ is a smooth equi-dimensional algebraic scheme over a field $k$, there is a canonical long exact sequence   
 \begin{equation}\label{equ-the canonical long exact sequence}
  \cdots\to H_{j-1,\nr}^{i+2j-1}(X,n)\xrightarrow{\rm restr.} H_{j-2,\nr}^{i+2j-1}(X,n)\xrightarrow{\partial} E_2^{j,i+j}(X,n)\xrightarrow{\iota_*} H_{j,\nr}^{i+2j}(X,n)\to\cdots,  
\end{equation}
where $E_2^{j,i+j}(X,n)\cong H^j(X,\mathcal{H}^{i+j}_X(n))$ and $\mathcal{H}^{i+j}_X(n)$ is the Zariski sheaf assocatied with the Zariski presheaf $U\mapsto H^{i+j}_{\e t}(U,n)$. 
\end{prop}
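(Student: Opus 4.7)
The plan is to deduce this long exact sequence from a two-stage diagram chase using the $E_1$-page of the coniveau spectral sequence. First I would set up, for each integer $p\ge 1$, a colimit Gysin--localization sequence
\[
(\star_p)\qquad\cdots\to E_1^{p,q-1}\xrightarrow{\iota_*^p}H^{p+q-1}(F_pX,n)\xrightarrow{\mathrm{res}}H^{p+q-1}(F_{p-1}X,n)\xrightarrow{\partial_p}E_1^{p,q}\xrightarrow{\iota_*^p}H^{p+q}(F_pX,n)\to\cdots,
\]
with $E_1^{p,q}:=\bigoplus_{x\in X^{(p)}}H^{p+q}_x(X,n)$, obtained as the filtered direct limit over pairs $Z'\subset Z\subset X$ with $\codim Z\geq p$ and $\codim Z'\geq p+1$ of the ordinary localization long exact sequence for the triple $(X\setminus Z',\,X\setminus Z,\,Z\setminus Z')$. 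This uses that étale cohomology commutes with such filtered colimits of schemes and that support cohomology at codimension-$p$ points concentrates on the direct sum in the $E_1$-page. A direct check shows that the coniveau differential factors as $d_1^{p,q}=\partial_{p+1}\circ \iota_*^p$, and for smooth equi-dimensional $X$ the Bloch--Ogus theorem together with Gabber purity yields $E_2^{p,q}=\ker d_1^{p,q}/\im d_1^{p-1,q}\cong H^p(X,\mathcal{H}^q_X(n))$.

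Second, I would construct the maps $\partial$ and $\iota_*$. Given $\beta\in H^{i+2j-1}_{j-2,\nr}(X,n)$, choose a lift $\tilde\beta\in H^{i+2j-1}(F_{j-1}X,n)$ and set $\partial(\beta):=[\partial_j(\tilde\beta)]\in E_2^{j,i+j}$. The class $\partial_j(\tilde\beta)\in E_1^{j,i+j}$ lies in $\ker d_1^{j,i+j}$ since $d_1^{j,i+j}\circ\partial_j=\partial_{j+1}\circ\iota_*^j\circ\partial_j=0$, using exactness of $(\star_j)$ at $E_1^{j,i+j}$. Any two lifts of $\beta$ differ by $\iota_*^{j-1}(\gamma)$ for some $\gamma\in E_1^{j-1,i+j}$ (by exactness of $(\star_{j-1})$ at $H^{i+2j-1}(F_{j-1}X,n)$), so their images under $\partial_j$ differ by $\partial_j(\iota_*^{j-1}(\gamma))=d_1^{j-1,i+j}(\gamma)$; hence $[\partial_j(\tilde\beta)]\in E_2^{j,i+j}$ is well-defined. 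Dually, for $\gamma\in\ker d_1^{j,i+j}$ the vanishing $\partial_{j+1}(\iota_*^j(\gamma))=d_1^{j,i+j}(\gamma)=0$ together with exactness of $(\star_{j+1})$ produces a lift of $\iota_*^j(\gamma)\in H^{i+2j}(F_jX,n)$ to $H^{i+2j}(F_{j+1}X,n)$, so $\iota_*^j(\gamma)\in H^{i+2j}_{j,\nr}(X,n)$; and the identity $\iota_*^j\circ d_1^{j-1,i+j}=\iota_*^j\circ\partial_j\circ\iota_*^{j-1}=0$ shows the assignment descends to $E_2^{j,i+j}$.

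Finally, exactness at the three middle terms is a routine diagram chase combining $(\star_{j-1})$, $(\star_j)$, and $(\star_{j+1})$. For instance, if $\beta\in\ker\partial$ then some lift $\tilde\beta$ satisfies $\partial_j(\tilde\beta)=d_1^{j-1,i+j}(\eta)=\partial_j(\iota_*^{j-1}(\eta))$ for some $\eta$; the modified lift $\tilde\beta-\iota_*^{j-1}(\eta)$ of $\beta$ then has $\partial_j=0$, and exactness of $(\star_j)$ further lifts it to $H^{i+2j-1}(F_jX,n)$, placing $\beta$ in $\mathrm{res}(H^{i+2j-1}_{j-1,\nr}(X,n))$. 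Exactness at $E_2^{j,i+j}$ and at $H^{i+2j}_{j,\nr}$ is established analogously using $(\star_j)$ and $(\star_{j+1})$. The main obstacle is entirely in the first step: carefully justifying that the filtered direct limits of the localization sequences produce bona fide long exact sequences whose $E_1$-page is as claimed, which comes down to standard but nontrivial continuity properties of étale cohomology and of local cohomology at a codimension-$p$ point. Once $(\star_p)$ is in hand, the remainder is essentially formal, and the identification $E_2^{j,i+j}\cong H^j(X,\mathcal{H}^{i+j}_X(n))$ is Bloch--Ogus.
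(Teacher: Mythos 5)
Your argument is correct and is essentially the proof the paper relies on: the paper cites Schreieder's Proposition 7.35 and indicates immediately afterwards that the sequence is induced by exactly the colimit Gysin--localization sequences $(\star_p)$ you construct, with the $E_1$-terms identified via Gabber purity and the $E_2$-identification coming from Bloch--Ogus. Your diagram chase defining $\partial$ and $\iota_*$ and verifying exactness is the standard argument carried out in the cited reference, so nothing further is needed.
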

More specifically, (\ref{equ-the canonical long exact sequence}) is induced by the canonical long exact sequence
\begin{equation*}
    \cdots\to H^i(F_{j}X,n)\xrightarrow{\rm restr.} H^i(F_{j-1}X,n)\overset{\partial}{\to}\bigoplus_{x\in X^{(j)}}  H^{i+1-2j}(x,n-j)\overset{\iota_\ast}{\to}\cdots,
\end{equation*}
where $X^{(j)}$ is the set of codimension-$j$ points of $X$ and $\iota_*$ is the \emph{Gysin homomorphism} for smooth pairs. Moreover, the group $\bigoplus_{x\in X^{(j)}} H^{i+1-2j}(x,n-j)$ is part of a \emph{Rost's cycle module complex} (cf.~\cite{Ros96}).

Due to the work of Schreieder, for smooth projective $X$ and $Y$, there exists an action of correspondence on refined unramified cohomology.
\begin{prop}[{\cite[Corollary 1.7]{schreieder2022moving}}]\label{prop-correspondences actions}
 Let $X$ and $Y$ be smooth projective equi-dimensional schemes over a field $k$ with $d_X\coloneqq\dim(X)$. For all $c,i,j\geq 0$, there is a natural bi-additive pairing
 $$\CH^c(X\times Y)\times H^i_{j,\nr}(X,n)\to H^{i+2c-2d_X}_{j+c-d_X,\nr}(Y,n+c-d_X),$$
 which is functorial with respect to the composition of correspondences.
\end{prop}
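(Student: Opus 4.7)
The plan is to define the correspondence action by the classical formula $[\Gamma]_\ast\alpha = (p_Y)_\ast\bigl([\Gamma]\cdot p_X^\ast\alpha\bigr)$, mimicking the construction on Chow groups and ordinary cohomology. This decomposes the task into three operations on refined unramified cohomology: a flat pullback along $p_X\colon X\times Y\to X$, an intersection with $[\Gamma]\in\CH^c(X\times Y)$, and a proper pushforward along $p_Y\colon X\times Y\to Y$.

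For the pullback, since $p_X$ is smooth of relative dimension $d_Y$, the preimage of a codimension $\geq j+1$ closed subset of $X$ remains of codimension $\geq j+1$ in $X\times Y$, so \'etale pullback respects the filtration by the $F_\bullet$ skeleta and induces $p_X^\ast\colon H^i_{j,\nr}(X,n)\to H^i_{j,\nr}(X\times Y,n)$. For the pushforward, $p_Y$ is proper since $X$ is projective; by Gabber purity one may identify refined unramified cohomology with a filtered Borel--Moore homology, on which proper pushforward shifts the cohomological degree by $-2d_X$, the twist by $-d_X$, and by the dimension estimate $\codim_Y p_Y(T)\geq\codim_{X\times Y}T-d_X$ the filtration index by $-d_X$. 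This produces the desired map $(p_Y)_\ast\colon H^{i+2c}_{j+c,\nr}(X\times Y,n+c)\to H^{i+2c-2d_X}_{j+c-d_X,\nr}(Y,n+c-d_X)$ once the middle step produces a class in the source.

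The main obstacle is the intersection step, namely producing a bi-additive pairing
\[
\CH^c(X\times Y)\otimes H^i_{j,\nr}(X\times Y,n)\to H^{i+2c}_{j+c,\nr}(X\times Y,n+c).
\]
Here I would invoke Chow's moving lemma, which, since $X\times Y$ is smooth projective, allows one to represent $\Gamma$ by a cycle $\sum n_W[W]$ meeting properly every closed subset $T\subset X\times Y$ that controls the supports appearing in a representative of $\alpha$, i.e.~$\codim(W\cap T)\geq\codim W+\codim T$. Under this proper intersection condition, the cup product of the cycle class $\cl(\Gamma)\in H^{2c}_{c,\nr}(X\times Y,c)$ with a class represented on $F_{j+1}(X\times Y)$ lands in the correct support stratum and yields a well-defined element of $H^{i+2c}_{j+c,\nr}(X\times Y,n+c)$. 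Independence from the proper representative of $\Gamma$ is then checked by the usual rational equivalence argument, which one reduces to the case of a principal divisor moved to infinity in a $\mathbb{P}^1$-family.

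Finally, functoriality with respect to composition of correspondences $\Gamma'\circ\Gamma$, where $\Gamma\in\CH^c(X\times Y)$ and $\Gamma'\in\CH^{c'}(Y\times Z)$, is obtained by a diagram chase on the triple product $X\times Y\times Z$ using smooth base change and the projection formula for the flat pullback and proper pushforward defined above. Once the three constituent operations are established and shown to satisfy these identities on refined unramified cohomology, the identity $(\Gamma'\circ\Gamma)_\ast=\Gamma'_\ast\circ\Gamma_\ast$ becomes formal. The step requiring the most care will be showing that the pairing is independent of the proper moving representative, as one must check compatibility with the filtration by the $F_\bullet$ skeleta throughout the moving argument, which is precisely where the paper's later approach via deformation to the normal cone offers a cleaner alternative.
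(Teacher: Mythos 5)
The paper gives no proof of this proposition: it is imported verbatim from Schreieder, and the point of the paper is to construct the same pairing by a different, moving-lemma-free route (the external product of \Cref{def-external/cross product I}, the diagonal pullback via deformation to the normal cone from \Cref{def-pullback along lci}, and proper pushforward, assembled in \Cref{new-def-corr-action} and matched against Schreieder's construction in \Cref{lem-concrete description of corr actions}). Your proposal is therefore a sketch of the cited source's argument rather than of anything this paper does. Within that route, the decomposition $[\Gamma]_\ast\alpha=(p_Y)_\ast([\Gamma]\cdot p_X^\ast\alpha)$ and your codimension bookkeeping for the flat pullback and the proper pushforward are correct.

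The genuine gap is the intersection step. A class in $H^i_{j,\nr}(X\times Y,n)$ is a germ along $F_{j+1}(X\times Y)$: Chow's moving lemma puts one representative of $\Gamma$ in good position with respect to one fixed complement $R=X\times Y\setminus U$, but well-definedness requires comparing the outputs over all admissible shrinkings of $U$ and over all rationally equivalent good-position representatives of $\Gamma$, and checking that the resulting classes agree after restriction to $F_{j+c}$ and still lift to $F_{j+c+1}$. Schreieder resolves this by moving the cohomology class rather than the cycle (his Corollary 6.5, a strictly stronger statement than Chow's moving lemma, proved via linear projections and requiring a smooth projective compactification); the ``usual rational equivalence argument reduced to a principal divisor in a $\mathbb{P}^1$-family'' that you invoke is exactly where compatibility with the filtration $F_\bullet$ must be established, and it is not formal. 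Likewise, the Lieberman-type diagram chase for $(\Gamma'\circ\Gamma)_\ast=\Gamma'_\ast\circ\Gamma_\ast$ only becomes formal once the projection formula and base change are known to hold on the refined groups themselves; in this paper that is the content of \Cref{lem-projection formual for refined}, \Cref{lem-cross product for proper push}, \Cref{lem-cross product for pull back} and \Cref{lem-projection formula II}, and it is a comparable amount of work in the moving-lemma approach.
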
 
An explicit description of this action is given in \cite[Corollary 6.8]{schreieder2022moving}.

\subsection{General Pullbacks along Smooth Schemes}\label{section general pullbacks}
In this subsection, we construct the functorial pullbacks of refined unramified cohomology between smooth algebraic schemes. First, if $f\colon X\to Y$ is a flat morphism between arbitrary algebraic schemes, we can define $f^*\colon H^i_{j,\nr}(Y,n)\to H^i_{j,\nr}(X,n)$ naturally as $F_jX\subset f^{-1}(U)$ for any open subset $F_jY\subset U\subset Y$. Now, suppose $f\colon X\to Y$ is a morphism of smooth $k$-schemes. Then $f$ factors as $f\colon X\xrightarrow{i}X\times Y\xrightarrow{p} Y$, where $i$ is a closed immersion and $p$ is the (smooth) projection. Naturally, we want to define $f^*\coloneqq i^*\circ p^*\colon H^{i}_{j,\nr}(Y,n)\to H^{i}_{j,\nr}(X,n)$, where we can define $p^*$ as above, so we are left to define the pullback along a closed immersion $i\colon X\hookrightarrow X\times Y$ of smooth $k$-schemes. The following construction is inspired by Fulton \cite{fulton2013intersection}.
\begin{define}\label{def-pullback along lci}
We define $i^*$ as the following composition
\begin{align*}
    \begin{split}
       H^i_{j,\nr}(X\times Y,n)\xrightarrow{\pi^*}H^i_{j,\nr}(X\times Y\times(\mathbb{A}^1\setminus \{0\}),n)\xrightarrow{\{t\}} H^{i+1}_{j,\nr}(X\times Y\times(\mathbb{A}^1\setminus \{0\}),n+1)\\
       \xrightarrow{-\delta^{i+1}_j}H^i_{j,\nr}(N_X(X\times Y),n)\cong H^i_{j,\nr}(X,n),
    \end{split}
\end{align*}
where
\begin{enumerate}
    \item $\pi\colon X\times Y\times(\mathbb{A}^1\setminus \{0\})\to X\times Y$ is the projection;
    \item $\{t\}$ is induced by \emph{multiplication with units} with $\mathbb{A}^1=\operatorname{Spec} k[t]$ (see below for a precise definition);
    \item $\delta^{i+1}_{j}$ is the boundary morphism for $N_X(X\times Y)\hookrightarrow D(X\times Y,X)$ in \cite[Lemma~3.1]{kok2023higher}, where $D(X\times Y,X)$ is the deformation space of $X\subset X\times Y$ and $N_X(X\times Y)$ is the normal bundle of $X\subset X\times Y$;
    \item  the last isomorphism is given by the inverse of $\mathbb{A}^1$-homotopy invariance in \cite[Proposition 3.10]{kok2023higher}, as $N_X(X\times Y)\cong f^*TY$ is a vector bundle over $X$.
\end{enumerate}
Moreover, we denote $J(X\times Y,Y)\coloneqq -\delta\circ \{t\}\circ \pi^*\colon  H^i_{j,\nr}(X\times Y,n)\to H^i_{j,\nr}(N_X(X\times Y),n)$.
\end{define}
Let us elaborate a bit on this definition. The element $t$ belongs to $H^0(\mathbb{A}^1\setminus \{0\},\mathbb{G}_m)$ and further can be viewed as an element of the group 
\[
\im(H^0(X\times Y\times(\mathbb{A}^1\setminus \{0\}),\mathbb{G}_m)\to H^1(X\times Y\times(\mathbb{A}^1\setminus \{0\}),1))
\]
via the Kummer sequence, which we also denote by $t$. There is a cup product 
 \begin{equation}\label{equ-cap product}
     \cup \colon H^p(X,m)\otimes H^q(X,n)\to H^{p+q}(X,m+n)
 \end{equation}
for any $X$. Let $X$ be an open subset $U$ of $X\times Y\times(\mathbb{A}^1\setminus \{0\})$ and $t$ be the image of $t$ under the restriction map $H^1(X\times Y\times(\mathbb{A}^1\setminus \{0\}),1)\to H^1(U,1)$. Then $\{t\}$ is exactly the morphism of cup product of $t\cup$.
\begin{lem}\label{lem-the same pullback}
Using the same notations as above, suppose that $X$ and $Y$ admit smooth projective compactifications and are equi-dimensional. This pullback $f^*$ is the same as the pullback in \cite[Corollary 6.9]{schreieder2022moving}. 
\end{lem}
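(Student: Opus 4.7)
The plan is to reduce the comparison to a direct computation on the deformation space after invoking Schreieder's moving lemma. Factoring $f\colon X\to Y$ as the graph $i\colon X\hookrightarrow X\times Y$ followed by the smooth projection $p\colon X\times Y\to Y$, both constructions agree with the flat pullback along $p$ (in each case, this is the map induced by ordinary flat pullback on \'etale cohomology with supports, without any moving required), so the comparison reduces to the case of the closed immersion $i$.

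In Schreieder's setup, the moving lemma of \cite{schreieder2022moving} represents any class $[\alpha]\in H^{i}_{j,\nr}(X\times Y,n)$ by some $\tilde\alpha\in H^{i}(U,n)$ on a Zariski open $U\subset X\times Y$ containing $F_{j+1}(X\times Y)$, chosen so that $U\cap X$ contains $F_{j}X$; Schreieder's pullback $i^{*}_{\mathrm{Sch}}[\alpha]$ is then the class of the honest restriction $\tilde\alpha|_{U\cap X}$. To compare this with the Fulton-style $i^{*}=-\delta\circ\{t\}\circ\pi^{*}$ of \Cref{def-pullback along lci}, I would track $\tilde\alpha$ through the construction: first $\pi^{*}\tilde\alpha$ lives on $U\times\G_m$, then cupping with $t$ produces a class in $H^{i+1}(U\times\G_m,n+1)$, which sits as the generic fibre of the deformation space $D(X\times Y,X)$ away from the zero section. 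The key identity to establish is
\begin{equation*}
-\delta\bigl(\{t\}\cup\pi^{*}\tilde\alpha\bigr)\;=\;q^{*}\bigl(\tilde\alpha|_{U\cap X}\bigr)
\end{equation*}
in $H^{i}(q^{-1}(U\cap X),n)$, where $q\colon N_X(X\times Y)\to X$ is the normal bundle projection; applying the inverse of $\A^{1}$-homotopy invariance then recovers $\tilde\alpha|_{U\cap X}$, matching Schreieder's definition.

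This identity is the refined unramified analogue of the residue relation $\partial^{t}_{t}(\{t\}\cdot\beta)=\beta$ from Rost's cycle modules, and in the \'etale-with-supports setting follows from the projection formula for the localization boundary together with the basic computation that $\delta(\{t\})$ is the fundamental class of the zero section $\{t=0\}\hookrightarrow\A^{1}$. The main technical obstacle is verifying that every step stays inside the codimension filtration $F_{\bullet}$: when $\pi^{*}$, $\{t\}\cup -$, and $\delta$ are applied to representatives living only over the open $U$, the resulting class must still define an element of $H^{i}_{j,\nr}(N_X(X\times Y),n)$ and not merely of ambient \'etale cohomology. This amounts to tracking the codimension filtration through the boundary map of \cite[Lemma~3.7]{kok2023higher} and through the $\A^{1}$-homotopy isomorphism of \cite[Proposition~3.5]{kok2023higher}, which is where the preparation established earlier in this section is essential.
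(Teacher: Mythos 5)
Your proposal is correct and follows essentially the same route as the paper: reduce to the closed immersion, represent the class on a suitable open via Schreieder's moving lemma, and verify the key identity that $-\delta(\{t\}\cup\pi^{*}\tilde\alpha)$ equals the honest restriction of $\tilde\alpha$ pulled back along the normal-bundle projection, which the paper also proves by observing that $\pi^{*}\tilde\alpha$ extends over the whole deformation space so that the residue of $\{t\}\cup(-)$ is restriction to the special fibre. The only slip is that the open produced by \cite[Corollary 6.5]{schreieder2022moving} must meet $X$ in a set containing $F_{j+1}X$ (not $F_{j}X$) for the restriction to define a class in $H^{i}_{j,\nr}(X,n)$, and this is exactly what also settles your worry about staying inside the codimension filtration.
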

\begin{proof}
Let us start with recalling the construction in \cite[Corollary 6.9]{schreieder2022moving} of the pullback of an $\alpha\in H^i_{j,\nr}(Y,n)$. The key is \cite[Corollary 6.5]{schreieder2022moving}, which tells us that there exists an open subset $U$ with $F_{j+1}Y \subset U$ such that $\alpha \in H^i(U,n)$ and $F_{j+1}X\subset f^{-1}(U)\eqqcolon V$, so we can define $f^*([\alpha])=[{(f|_{f^{-1}(U)})}^*(\alpha)] \in H^i_{j,\nr}(X,n)$. Taking such $U$, then we have
\begin{itemize}
    \item $p^*[\alpha]=[p^*(\alpha)]$ with $p^*(\alpha)\in H^i(X\times U,n)$ and $F_{j+1}X\subset i^{-1}(X\times U)$;
    \item $\pi^*\circ p^*[\alpha]=[\pi^*\circ p^*(\alpha)]$ with $\pi^*\circ p^*(\alpha)\in H^i(X\times U\times (\mathbb{A}^1\setminus \{0\}),n)$;
    \item  $\{t\}\circ\pi^*\circ p^*[\alpha]=[\{t\}\cup\pi^*\circ p^*(\alpha)]$ which is also defined over $\mathcal{X}\coloneqq X\times U\times (\mathbb{A}^1\setminus \{0\})$;
    \item $-\delta^{i+1}_j\circ \{t\}\circ\pi^*\circ p^*[\alpha]=[-\partial\circ\{t\}\cup\pi^*\circ p^*(\alpha)]$, where 
    \[\partial\colon H^{i+1}(\mathcal{X},n
    +1)\to H^i(N_X(X\times Y)\setminus(N_X(X\times Y)\cap W),n)\]
    and $W$ is the closure of $X\times (Y\setminus U)\times (\mathbb{A}^1\setminus\{0\})$ in $D(X\times Y,X)$.
\end{itemize}
Note that we have the following commutative diagram
\begin{equation*}
    \xymatrix{X\times U\times (\mathbb{A}^1\setminus\{0\})\ar@{^(->}[r]^{j''}\ar@{=}[d]&D(X\times Y,X)|_{X\times U\times \mathbb{A}^1}\ar@{^(->}[d]^{\rm open}&&N_X(X\times Y)|_{f^{-1}(U)}\ar@{_(->}[ll]_{i''}\ar@{^(->}[d]^{\rm open}\\
   X\times U\times (\mathbb{A}^1\setminus\{0\})\ar@{^(->}[r]^{j'}&D(X\times Y,X)\setminus W&&N_X(X\times Y)\setminus W,\ar@{_(->}[ll]_{i'}
   }
\end{equation*}
where $i'$ (resp. $i''$) is a closed immersion with the open complement $j'$ (resp. $j''$). Since $F_{j+1}X\subset f^{-1}(U)$, then $F_{j+1}N_X(X\times Y)\subset N_X(X\times Y)|_{f^{-1}(U)}$, and $-\delta^{i+1}_j\circ \{t\}\circ\pi^*\circ p^*[\alpha]$ can be represented by $[-\partial\circ\{t\}\cup\pi^*\circ p^*(\alpha)]$ where $\partial\colon H^{i+1}(\mathcal{X},n
    +1)\to H^i(N_X(X\times Y)|_{f^{-1}(U)},n)$ with $-\partial\circ\{t\}\cup\pi^*\circ p^*(\alpha)\in H^i(N_X(X\times Y)|_{f^{-1}(U)},n)$.
Now consider the following commutative diagram
\begin{equation*}
   \xymatrix{
   X\times U\times (\mathbb{A}^1\setminus\{0\})\ar@{^(->}[r]^{j''}\ar[dr]^{\pi}&D(X\times Y,X)|_{X\times U\times \mathbb{A}^1}\ar[d]^{h}&&N_X(X\times Y)|_{f^{-1}(U)}\ar@{_(->}[ll]_{i''}\ar[d]^{h'}\\
   &X\times U\ar[d]^{p}&&f^{-1}(U)\ar@{_(->}[ll]_{i}\ar[dll]^{f|_{f^{-1}(U)}}\\
   &U&&,
   } 
\end{equation*}
where $\pi$, $h$, $h'$ and $p$ are all projections. Hence $$\pi^*\circ p^*(\alpha)={j''}^*\circ h^*\circ p^*(\alpha)=h^*\circ p^*(\alpha)|_{X\times U\times (\mathbb{A}^1\setminus\{0\})},$$ so that $-\partial(\{t\}\cup(\pi^*\circ p^*(\alpha)))=h^*\circ p^*(\alpha)|_{N_X(X\times Y)|_{f^{-1}(U)}}={i''}^*\circ h^*\circ p^*(\alpha)={h'}^*\circ i^*\circ p^*(\alpha)$. Therefore, $-\delta^{i+1}_j\circ \{t\}\circ\pi^*\circ p^*[\alpha]=[{h'}^*\circ i^*\circ p^*(\alpha)]=[{h'}^*\circ (f|_{f^{-1}(U)})^*(\alpha)]$, which composed with the inverse of ${h'}^*$ is exactly $[(f|_{f^{-1}(U)})^*(\alpha)]$.
\end{proof}
\begin{lem}
    If $f\colon X\to Y$ is a flat morphism with $X$ and $Y$ smooth, then $f^*\colon H^i_{j,\nr}(Y,n)\to H^i_{j,\nr}(X,n)$ is exactly the usual pull-back of refined unramified cohomology. 
\end{lem}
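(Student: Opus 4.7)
The plan is to reduce this to the computation carried out in the proof of \Cref{lem-the same pullback}. Factor $f$ as $f = p\circ i$, where $i\colon X\hookrightarrow X\times Y$ is the graph of $f$ and $p\colon X\times Y\to Y$ is the second projection. Since $p$ is smooth and hence flat, the map $p^*$ produced by \Cref{def-pullback along lci} agrees with the flat pullback by the opening remark of this subsection; so it suffices to check that $i^*\circ p^*$ coincides on classes with the naive flat pullback along $f$.

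The key first observation is that flatness of $f$ supplies precisely what the smooth-compactification hypothesis provided in \Cref{lem-the same pullback}. Namely, for any open $U\subset Y$ with $F_{j+1}Y\subset U$, the fact that flat morphisms preserve the codimension of closed subsets under pullback forces $F_{j+1}X\subset f^{-1}(U)\eqqcolon V$. Hence for any representative $\alpha\in H^i(U,n)$ of a class in $H^i_{j,\nr}(Y,n)$, the restriction $(f|_V)^*(\alpha)\in H^i(V,n)$ descends to a class in $H^i_{j,\nr}(X,n)$, and by definition this class is the flat pullback $f^*[\alpha]$.

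Next, I would rerun the diagram chase from the end of the proof of \Cref{lem-the same pullback} essentially verbatim. The deformation space $D(X\times Y,X)|_{X\times U\times \A^1}$, the commutative squares relating $\pi$, $h$, $h'$ and $p$, and the closed immersion $i''\colon N_X(X\times Y)|_V\hookrightarrow D(X\times Y,X)|_{X\times U\times \A^1}$ together with its open complement $j''$ are all built from $U$ and $V$ exactly as before. That chase yields
\begin{equation*}
    -\delta^{i+1}_j\circ \{t\}\circ \pi^*\circ p^*[\alpha] = [{h'}^*\circ (f|_V)^*(\alpha)],
\end{equation*}
and composing with the inverse of ${h'}^*$, which exists by $\A^1$-homotopy invariance applied to the vector bundle $N_X(X\times Y)|_V\to V$, identifies this with $[(f|_V)^*(\alpha)]$, which is exactly the flat pullback.

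The only (minor) obstacle I anticipate is the codimension bookkeeping in the second paragraph: one must check $F_{j+1}X\subset f^{-1}(U)$ for every $U\supset F_{j+1}Y$. For flat morphisms between smooth schemes this follows from the dimension formula $\dim\mathcal{O}_{X,x} = \dim\mathcal{O}_{Y,f(x)} + \dim\mathcal{O}_{X_{f(x)},x}$ applied to a generic point of any component of $f^{-1}(Y\setminus U)$, together with the fact that smooth schemes are catenary so that $\codim_X(x) = \dim\mathcal{O}_{X,x}$ on each connected component. Once this is in hand, the remainder of the proof is a routine transcription of the argument of \Cref{lem-the same pullback}.
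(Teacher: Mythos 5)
Your proposal is correct and takes essentially the same route as the paper, whose entire proof is the remark that the argument of \Cref{lem-the same pullback} goes through: one replaces the open set $V$ supplied there by Schreieder's moving lemma with $f^{-1}(U)$, which contains $F_{j+1}X$ automatically by flatness, and reruns the deformation-space diagram chase verbatim. Your codimension bookkeeping via the flat dimension formula is precisely the point that renders the smooth-compactification hypothesis unnecessary, so nothing further is needed.
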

\begin{proof}
    This is essentially the same as the proof of \Cref{lem-the same pullback}.
\end{proof}

\begin{rem}\label{pull-back ordinary cohomology}
{\rm We note that the proof of \Cref{lem-the same pullback} also gives that for large $j$, this is precisey the pull-back map on ordinary cohomology.}
\end{rem}

\begin{lem}\label{lem-functorility for redular embedding}
    If $f\colon X\hookrightarrow Y$ is a regular embedding with smooth $X$ and $Y$, then $f^*$ is well defined, i.e., $f^*=i^*\circ p^*$ with $i\colon X\hookrightarrow X\times Y$ and $p\colon  X\times Y\to Y$, where $f^*$ is defined by \Cref{def-pullback along lci} directly. 
\end{lem}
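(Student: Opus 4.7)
The strategy is to construct a natural smooth morphism between the two relevant deformation spaces and verify that each step of \Cref{def-pullback along lci} is compatible with the flat pullbacks it induces. This parallels the proof of functoriality of the refined Gysin pullback in Fulton's intersection theory.

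First, I would construct a morphism $\tilde{p}\colon D(X\times Y,X)\to D(Y,X)$ as follows. The smooth morphism $p\times\id_{\A^1}\colon (X\times Y)\times\A^1\to Y\times\A^1$ sends the graph $i(X)\times\{0\}$ to $X\times\{0\}$, and one checks in local coordinates (on the chart where the parameter $t$ generates the exceptional divisor of the blow-up of $(X\times Y)\times\A^1$ along $i(X)\times\{0\}$) that the pullback of the ideal of $X\times\{0\}\subset Y\times\A^1$ becomes $(t)$ there, hence invertible. By the universal property of the blow-up this yields $\tilde{p}$, which is smooth of relative dimension $\dim X$, is the identity on the $\A^1$-base, restricts to $p\times\id_{\G_m}$ on the generic fibres, and restricts on the special fibre to the canonical surjection of vector bundles over $X$
\[
q\colon N_X(X\times Y)\cong f^*TY\twoheadrightarrow N_XY,
\]
obtained from the conormal exact sequence $0\to TX\to f^*TY\to N_XY\to 0$.

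Next, for $\alpha\in H^i_{j,\nr}(Y,n)$, I would compare the two constructions step by step. We have $\pi^*\circ p^*=(p\times\id)^*\circ\pi_Y^*$, and since $t\in H^1(\G_m,1)$ is pulled back from $\G_m$ to both $Y\times\G_m$ and $X\times Y\times\G_m$, naturality of the cup product gives $t\cup\pi^*p^*\alpha=(p\times\id)^*(t\cup\pi_Y^*\alpha)$. The key remaining identity is the naturality of the boundary $\delta$ under the smooth (hence flat) morphism $\tilde{p}$ satisfying $\tilde{p}^{-1}(N_XY)=N_X(X\times Y)$, namely
\[
\delta_{X\times Y}\circ(p\times\id)^*=q^*\circ\delta_Y.
\]
This identifies $i^*p^*(\alpha)$, before the final homotopy step, with the image under $q^*$ of $-\delta_Y(t\cup\pi_Y^*\alpha)$. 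Since $q$ is a morphism of vector bundles over $X$ with $\pi_{N_XY}\circ q=\pi_{N_X(X\times Y)}$, the pullback $q^*$ commutes with the $\A^1$-homotopy isomorphisms from \cite[Proposition 3.5]{kok2023higher} identifying both $H^i_{j,\nr}(N_XY,n)$ and $H^i_{j,\nr}(N_X(X\times Y),n)$ with $H^i_{j,\nr}(X,n)$, yielding the desired equality $f^*=i^*\circ p^*$.

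The main obstacle is verifying the naturality of the refined unramified boundary $\delta^{i+1}_j$ under the smooth morphism $\tilde{p}$, since this is not purely formal: one must unwind the construction of $\delta$ in \cite[Lemma 3.7]{kok2023higher} and verify that the codimension filtrations $F_j$ on the localization exact sequence are preserved by flat pullback along $\tilde{p}$, which requires that flat morphisms preserve codimension together with the scheme-theoretic identity $\tilde{p}^{-1}(N_XY)=N_X(X\times Y)$.
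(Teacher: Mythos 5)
Your proof is correct and follows essentially the same route as the paper's: the paper likewise reduces everything to the flat morphism $q\colon D(X\times Y,X)\to D(Y,X)$ of deformation spaces, which restricts to $p\times\id$ on the generic fibres and to the vector-bundle projection $N_X(X\times Y)\to N_XY$ over $X$ on the special fibre, and concludes via the compatibility of flat pullback with $\pi^*$, $\{t\}$, $-\delta$ and homotopy invariance. You merely supply more detail (the blow-up construction of $\tilde p$ and the naturality of the boundary map) than the paper, which simply asserts flatness and records the resulting commutative diagram.
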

\begin{proof}
    Note that $q\colon  D(X\times Y,X)\to D(Y,X)$ is flat and this morphism induces a projection of vector bundles over $X$, i.e., $N_X(X\times Y)\to N_XY$. This lemma is given by the commutative \ref{diagram-functor for regular embedding}, where ${'\pi}\colon  Y\times \mathbb{G}_m\to Y$ is a projection and all horizontal arrows are induced by the pull-back along $q$ which are all flat pull-backs.
\end{proof}
\begin{equation*}\label{diagram-functor for regular embedding}
        \xymatrix{
        H^i_{j,\nr}(Y,n)\ar@{=}[r]\ar[d]^{p^*}&H^i_{j,\nr}(Y,n)\ar[dd]^{'\pi^*}\\
         H^i_{j,\nr}(X\times Y,n)\ar[d]^{\pi^*}&&\\
         H^i_{j,\nr}(X\times Y\times \mathbb{G}_m,n)\ar[d]^{\{t\}}&H^i_{j,\nr}(Y\times \mathbb{G}_m,n)\ar[d]^{\{t\}}\ar[l]_{\hspace{8mm}q^*}\\
         H^{i+1}_{j,\nr}(X\times Y\times \mathbb{G}_m,n+1)\ar[d]^{-\delta}&H^{i+1}_{j,\nr}(Y\times \mathbb{G}_m,n+1)\ar[d]^{-\delta}\ar[l]_{\hspace{8mm}q^*}\\
         H^i_{j,\nr}(N_X(X\times Y),n)&H^i_{j,\nr}(N_XY,n)\ar[l]_{\hspace{8mm}q^*}
        }\tag{\text{Diagram \ref{lem-functorility for redular embedding}}}
    \end{equation*}
    
In order to prove \Cref{thm-functoriality}, the functoriality of pull-backs between smooth schemes, let us first list some facts.
\begin{lem}[cf.~{\cite[Lemma 11.3]{Ros96}}]\label{lem-lem11.3}
    Suppose $Y\hookrightarrow X$ is a regular embedding and $g\colon  V\to X$ is flat of constant relative dimension. Let 
    \begin{equation*}
        N(g)\colon  N(V,Y\times_XV)=N(X,Y)\times_XV\to N(X,Y)
    \end{equation*}
    be the flat projection. Then $$J(V,Y\times_XV)\circ g^*=N(g)^*\circ J(X,Y)\colon  H^i_{j,\nr}(X,n)\to H^i_{j,\nr}(N(X,Y)\times _XV,n).$$
\end{lem}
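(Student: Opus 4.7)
The plan is to compare the two compositions $J(V, Y \times_X V) \circ g^*$ and $N(g)^* \circ J(X,Y)$ factor by factor, mirroring the strategy of Diagram~\ref{lem-functorility for redular embedding} in the preceding lemma. Recall that $J(X,Y)$ is the composition
\[
H^i_{j,\nr}(X,n) \xrightarrow{\pi_X^*} H^i_{j,\nr}(X \times \G_m, n) \xrightarrow{\{t\}} H^{i+1}_{j,\nr}(X \times \G_m, n+1) \xrightarrow{-\delta_X} H^i_{j,\nr}(N(X,Y), n),
\]
and $J(V, Y \times_X V)$ is defined analogously with $X$ replaced by $V$ and $Y$ replaced by $Y \times_X V$. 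My first task would be to verify that $g$ induces a flat morphism of deformation spaces $D(g)\colon D(V, Y \times_X V) \to D(X,Y)$ which restricts to $g \times \mathrm{id}_{\G_m}$ on the open complement of the special fibres and to $N(g)$ on the closed special fibres themselves. This is a standard compatibility of the blow-up description of the deformation to the normal cone with flat base change, and uses that $Y \times_X V \hookrightarrow V$ is again a regular embedding whenever $g$ is flat and $Y \hookrightarrow X$ is.

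With this geometric input in place, the proof reduces to three compatibility statements: (i) $\pi_V^* \circ g^* = (g \times \mathrm{id})^* \circ \pi_X^*$, (ii) $\{t\} \circ (g \times \mathrm{id})^* = (g \times \mathrm{id})^* \circ \{t\}$, and (iii) $\delta_V \circ (g \times \mathrm{id})^* = N(g)^* \circ \delta_X$. Assertion (i) is the functoriality of flat pullback applied to the evident commutative square involving $\pi_V$ and $\pi_X$. Assertion (ii) follows from the compatibility of cup product with pullback, together with the observation that $(g \times \mathrm{id})^* t = t$, since both copies of $t$ are pulled back from the common $\G_m$ factor via the Kummer sequence.

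The main obstacle will be (iii), the compatibility of the boundary operator with flat pullback. This is the exact analogue of \cite[Lemma 11.3]{Ros96} in the refined unramified setting. My approach would be to appeal to the general fact that the boundary morphism in the localization sequence for refined unramified cohomology with supports is functorial with respect to flat pullback along a morphism of closed/open pairs, here the pair $(D(V, Y \times_X V), V \times \G_m) \to (D(X, Y), X \times \G_m)$ whose closed complements are $N(V, Y \times_X V)$ and $N(X,Y)$ respectively, related by $N(g)$. This functoriality is built into the construction of $\delta$ in \cite[Lemma 3.7]{kok2023higher}, but the delicate point is to track the filtrations $F_j$ through the diagram so that the identity holds on refined unramified cohomology rather than merely on ordinary cohomology; here flatness of $g$ (and hence of $D(g)$ and $N(g)$) is essential, since it guarantees that preimages of codimension-$(j+1)$ points lie in the correct codimension filtration.

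Once (i)--(iii) are established, chaining them together, composing with the $\mathbb{A}^1$-homotopy invariance isomorphism $H^i_{j,\nr}(N(X,Y),n) \cong H^i_{j,\nr}(Y,n)$ and its counterpart for $V$, and inserting the sign $-$ from the definition of $J$, yields the asserted equality $J(V, Y \times_X V) \circ g^* = N(g)^* \circ J(X,Y)$.
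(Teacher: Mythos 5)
Your proposal is correct and follows essentially the same route as the paper: the paper's proof simply observes that $D(V,Y\times_XV)\to D(X,Y)$ is flat and then checks, exactly as in \Cref{lem-functorility for redular embedding}, that each constituent of $J$ (flat pullback, cup with $t$, and the boundary $-\delta$) commutes with the induced flat pullbacks. The only superfluous bit is your final composition with the $\mathbb{A}^1$-homotopy invariance isomorphism, which is not part of $J$ and is not needed for the stated equality.
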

\begin{proof}
    Note that $D(V,Y\times_X V)\to D(X,Y)$ is flat. We can get this lemma by following the line of reasoning in the proof of \Cref{lem-functorility for redular embedding}.
\end{proof}
\begin{lem}[cf.~{\cite[Lemma 11.4]{Ros96}}]\label{lem-lem11.4}
  Let $U\hookrightarrow V$ be a closed immersion and let $p\colon  V\to W$ be a flat morphism. Suppose that the composite $q\colon  N_UV\to U\to W$ is also flat of the same relative dimension as $p$. Then $$J(V,U)\circ p^*=q^*\colon  H^i_{j,\nr}(W,n)\to H^i_{j,\nr}(N_UV,n).$$  
\end{lem}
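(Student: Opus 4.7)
The plan is to reduce the identity to the tautological computation $J(W\times\mathbb{A}^1,\,W\times\{0\})=\mathrm{id}$ by introducing a single flat morphism $r\colon D(V,U)\to W\times\mathbb{A}^1$ that dominates the construction of $J(V,U)\circ p^*$.

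First, I would construct $r$ as the composite of the inclusion $D(V,U)\hookrightarrow \mathrm{Bl}_{U\times\{0\}}(V\times\mathbb{A}^1)$, the blow-down to $V\times\mathbb{A}^1$, and $p\times\mathrm{id}_{\mathbb{A}^1}$. This $r$ is a morphism over $\mathbb{A}^1$; viewed fibrewise over $\mathbb{A}^1$, it restricts to $p$ over $t\ne 0$ and to $q$ over $t=0$, both flat of the same relative dimension. Combined with the flatness of $D(V,U)\to\mathbb{A}^1$, the fibrewise flatness criterion (EGA~IV$_3$, 11.3.10) shows that $r$ itself is flat. By construction the strata sit in Cartesian squares
\[
V\times\G_m=r^{-1}(W\times\G_m)\quad\text{and}\quad N_UV=r^{-1}(W\times\{0\}),
\]
on which $r$ restricts to $(p\circ\pi,\mathrm{pr}_{\G_m})$ and to $(q,0)$ respectively.

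Second, I would transport the whole definition of $J(V,U)\circ p^*$ along $r^*$. Flat pullback commutes with the projection pullback $\pi^*$, with cup product by the class $\{t\}$ (since $t$ is pulled back from the coordinate on $\G_m$), and with the boundary $\delta$ of the Cartesian pair $N_UV\subset D(V,U)$ lying over $W\times\{0\}\subset W\times\mathbb{A}^1$; moreover, flat pullback preserves the filtration $F_\bullet$ since $r$ has constant relative dimension. Chaining these compatibilities yields
\[
J(V,U)\circ p^* \;=\; q^*\circ J(W\times\mathbb{A}^1,\,W\times\{0\}).
\]

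Finally, I would verify that $J(W\times\mathbb{A}^1,\,W\times\{0\})$ is the identity on $H^i_{j,\nr}(W,n)$. This is the standard residue formula: since $t$ is a global equation of the smooth divisor $W\times\{0\}\subset W\times\mathbb{A}^1$, the localization sequence and the Leibniz rule for $\delta$ give $-\delta(\{t\}\cup\pi^*\alpha)=\alpha$ for every $\alpha$, matching the sign convention of \cite[Lemma~3.7]{kok2023higher}. The main obstacle in executing this plan is the compatibility of $\delta$ with flat pullback at the refined unramified level; once this is established using the Cartesian squares above and the codimension-preserving property of the flat morphism $r$, the rest of the argument is formal and parallels the proof of \Cref{lem-lem11.3}.
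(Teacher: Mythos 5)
Your proposal is correct and follows the same route as the paper, which simply defers to Rost's proof of \cite[Lemma 11.4]{Ros96} and asserts it carries over verbatim; your argument is exactly that proof spelled out (the flat map $r\colon D(V,U)\to W\times\mathbb{A}^1$ obtained by the fibrewise criterion, transport of $\pi^*$, $\{t\}$ and $\delta$ along $r^*$, and the residue identity $-\delta(\{t\}\cup\pi^*\alpha)=\alpha$). The only cosmetic point is that $J(W\times\mathbb{A}^1,W\times\{0\})$ lands in $H^i_{j,\nr}(N_{W\times\{0\}}(W\times\mathbb{A}^1),n)$, so ``$=\mathrm{id}$'' should be read modulo the canonical identification of this normal bundle with $W\times\mathbb{A}^1$ and homotopy invariance.
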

\begin{proof}
See the proof of \cite[Lemma 11.4]{Ros96}. Everything in the proof remains valid if we replace the cycle modules with the refined unramified cohomology.
\end{proof}
\begin{lem}[cf.~{\cite[Lemma 11.7]{Ros96}}]\label{lem-lem11.7}
 Let $Z\xrightarrow{g}Y\xrightarrow{f}X$ be regular embeddings. Then
 \begin{equation*}
     J(N_YX,N_YX|_Z)\circ J(X,Y)=J(N_ZX,N_ZY)\circ J(X,Z)\colon  H^i_{j,\nr}(X,n)\to H^i_{j,\nr}(N(N_ZX,N_ZY),n).
 \end{equation*}
\end{lem}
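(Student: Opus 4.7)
The plan is to adapt Rost's proof of \cite[Lemma 11.7]{Ros96} to our setting. The geometric ingredient is a bi-deformation space: starting from the chain $Z \subset Y \subset X$, form the ordinary deformation space $D(X,Y)$ with parameter $t$, inside which sits a natural closed subscheme $D(Y,Z)$ induced by $g$; then form $\mathbb{D} \coloneqq D(D(X,Y),\,D(Y,Z))$ with a second parameter $s$. The scheme $\mathbb{D}$ is flat over $\A^2_{s,t}$ with generic fibre $X$. Its fibre over $\{s=0\}$ is, up to flat factors, $D(N_YX,\,N_YX|_Z)$, whose further specialization at $t=0$ yields $N(N_YX,N_YX|_Z)$. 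By the symmetry of iterated deformations, its fibre over $\{t=0\}$ is $D(N_ZX,\,N_ZY)$, specializing at $s=0$ to $N(N_ZX,N_ZY)$. Both doubly-degenerate fibres canonically coincide as vector bundles over $Z$, namely $N_YX|_Z \oplus N_ZY$, via the short exact sequence $0 \to N_ZY \to N_ZX \to N_YX|_Z \to 0$.

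Using \Cref{lem-lem11.3} and \Cref{lem-lem11.4} to handle the intermediate flat projections inherent in the definition of $J$, both compositions $J(N_YX,N_YX|_Z)\circ J(X,Y)$ and $J(N_ZX,N_ZY)\circ J(X,Z)$ can be rewritten as a single operation on an appropriate open piece of $\mathbb{D}$, of the form $(-\delta_{\{s=0\}})\circ(-\delta_{\{t=0\}})\circ\{t\}\circ\{s\}\circ\pi^*$, where $\pi\colon \mathbb{D}\setminus\{st=0\} \to X$ is the natural projection and the two boundaries are taken along the coordinate divisors of $\A^2_{s,t}$. The two compositions differ only in the order of cupping with the units $\{s\},\{t\}$ and of taking the two boundaries. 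Since cup product with units is graded-commutative, $\{s\}\cup\{t\} = -\{t\}\cup\{s\}$ by \eqref{equ-cap product}, and the double boundary along transversally meeting divisors is graded-anticommutative, $\delta_s \delta_t = -\delta_t \delta_s$, the two minus signs cancel and the two expressions agree.

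The main obstacle will be to verify that Rost's diagrammatic manipulations for cycle modules transport to refined unramified cohomology. The required formal properties of $\pi^*$, $\{\,\cdot\,\}$, and $\delta$ -- namely $\A^1$-homotopy invariance, compatibility with flat pullback, and the Leibniz rule for $\delta$ on cup products with units -- were already established in \cite{kok2023higher} and implicitly used in \Cref{def-pullback along lci} and \Cref{lem-functorility for redular embedding}; the remaining anti-commutativity of the double boundary can be deduced from the bi-localization structure for the pair of coordinate divisors $\{s=0\},\{t=0\}\subset \A^2_{s,t}$. Once these properties are in place, Rost's proof of \cite[Lemma 11.7]{Ros96} proceeds verbatim, exactly in the spirit already indicated in the proof of \Cref{lem-lem11.4}.
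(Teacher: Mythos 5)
Your proposal follows essentially the same route as the paper: both use the double deformation space over $\mathbb{A}^2_{s,t}$ (Rost's $\overline{D}(X,Y,Z)$), rewrite each composition via \Cref{lem-lem11.3} as an iterated boundary applied to $\{s\}\cup\{t\}\cup(-)$ pulled back from $X$, and conclude by combining the Leibniz rule $\partial(t\cup\alpha)=-t\cup\partial\alpha$, the anticommutativity $t\cup s=-s\cup t$, and the anticommutativity of the double residue along the two coordinate divisors (the paper cites \cite[Lemma 2.4]{JS03} for the latter). The argument is correct and matches the paper's proof.
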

\begin{proof}
 Let us consider the double deformation space $D(X,Y,Z)\coloneqq \overline{D}(X,Y,Z)$; see \cite[Section~10]{Ros96}. Then $D(X,Y,Z)$ is flat over $\mathbb{A}^2$ such that 
 \begin{eqnarray*}
   D|_{\mathbb{A}^1\times \mathbb{G}_m}&=&D(X,Y)\times \mathbb{G}_m\\
   D|_{\mathbb{G}_m\times \mathbb{A}^1}&=&\mathbb{G}_m \times D(X,Z)\\
   D|_{(0,0)}&=&N(N_ZX,N_ZY).
 \end{eqnarray*}
  Suppose that $\mathbb{A}^2=\operatorname{Spec}k[t,s]$ with coordinate $(t,s)$ and let $\delta_1^3$, $\delta_0^1$, $\delta_2^3$, $\delta_0^2$ be the boundary maps of refined unramified cohomology associated with the closed immersions 
  \[
  D|_{0\times \mathbb{G}_m}\hookrightarrow D|_{\mathbb{A}^1\times \G_m},\hspace{2mm} D_{(0,0)}\hookrightarrow D|_{0\times \mathbb{A}^1}, \hspace{2mm} D|_{\G_m\times 0}\hookrightarrow D|_{\G_m\times \mathbb{A}^1},\hspace{2mm} D|_{(0,0)}\hookrightarrow D|_{\mathbb{A}^1\times 0}
  \]
respectively. Let $\pi_{1,i}\colon  X\times \G_m\times \G_m\to X\times \G_m$ be the projections to the $(1,i)$-factors. Let $\pi\colon  X\times \G_m\to X$ and $q\colon  X\times \G_m\times \G_m\to X$ be the projections. Then by definition, we have
\begin{eqnarray*}
   J(N_YX,N_YX|_Z)\circ J(X,Y)&=&-\delta_0^1\circ \{s\}\circ (N_YX\times \G_m\to N_YX)^*\circ J(X,Y)\\
   \text{(by \Cref{lem-lem11.3})}&=&-\delta_0^1\circ \{s\}\circ J(X\times \G_m,Y\times \G_m)\circ \pi^*\\
   &=&-\delta_0^1\circ \{s\}\circ (-\delta^3_1)\circ \{t\}\circ \pi_{1,3}^*\circ \pi^*\\
   &=&-\delta_0^1\circ \{s\}\circ (-\delta^3_1)\circ \{t\}\circ q^*;\\
   J(N_ZX,N_ZY)\circ J(X,Z)&=& -\delta_0^2\circ \{t\}\circ (N_ZX\times \G_m\to N_ZX)^*\circ J(X,Z)\\
   \text{(by \Cref{lem-lem11.3})}&=&-\delta_0^2\circ \{t\}\circ J(X\times \G_m,Z\times \G_m)\circ \pi^*\\
   &=&-\delta_0^2\circ \{t\}\circ (-\delta_2^3)\circ\{s\} \circ \pi_{1,2}^* \circ \pi^*\\
   &=&-\delta_0^2\circ \{t\}\circ (-\delta_2^3)\circ\{s\} \circ q^*.
\end{eqnarray*}
Thus it suffices to show that $\delta_0^1\circ \{s\}\circ \delta^3_1\circ \{t\}=\delta_0^2\circ \{t\}\circ \delta_2^3\circ\{s\}$. Indeed, we have 
\begin{eqnarray*}
    \delta_0^1\circ \{s\}\circ \delta^3_1\circ \{t\}&\overset{(1)}{=}&-\delta_0^1\circ \delta^3_1\circ \{s\}\circ \{t\}\\
    &\overset{(2)}{=}& \delta_0^2\circ\delta_2^3\circ \{s\}\circ \{t\}\\
    &\overset{(3)}{=}& -\delta_0^2\circ\delta_2^3\circ \{t\} \circ\{s\}\\
    &\overset{(4)}{=}&\delta_0^2\circ \{t\}\circ \delta_2^3\circ\{s\}.
\end{eqnarray*}
Here (1) and (4) are induced by the equality $\partial(t\cup\alpha)=-t\cup \partial(\alpha)$ for any $\alpha\in H^i(U,n)$ and $t\in \Gamma(U,\mathcal{O}^*_U)$, (2) is given by \cite[Lemma 2.4]{JS03}, which is the same as the argument in the proof of \Cref{prop-exact sequence is compatible with pullback}, and (3) is induced by $t\cup s=-s\cup t$ for any $s,t\in H^1(U,1)$.
\end{proof}
\begin{thm}\label{thm-functoriality}
    For morphisms $g\colon  Z\to Y$ and $f\colon  Y\to X$ with smooth $X$, $Y$ and $Z$, one has $(f\circ g)^*=g^*\circ f^*$. In particular, our definition is well-defined.
\end{thm}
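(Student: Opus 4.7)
The plan is to follow Rost's argument [Section~12] adapted to refined unramified cohomology, using the lemmas established in this section. Writing $\gamma_h$ for the graph of a morphism $h$, \Cref{def-pullback along lci} and \Cref{lem-functorility for redular embedding} give
\[
g^{*}\circ f^{*} \;=\; \gamma_g^{*}\circ p_{Z\times Y\to Y}^{*}\circ \gamma_f^{*}\circ p_{Y\times X\to X}^{*}, \qquad (f\circ g)^{*} \;=\; \gamma_{fg}^{*}\circ p_{Z\times X\to X}^{*}.
\]
The overall strategy is to commute the middle flat pullback past $\gamma_f^{*}$ by base change, collapse the two resulting regular embeddings into one via \Cref{lem-lem11.7}, refactor the combined embedding through $\gamma_{fg}$, and finally eliminate an auxiliary pair of maps whose composition is the identity.

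For the base-change step, \Cref{lem-lem11.3}, combined with the $\mathbb{A}^{1}$-homotopy invariance built into \Cref{def-pullback along lci}(4), yields the usual identity $\tilde{\imath}^{*}\circ h^{*} = \tilde{h}^{*}\circ i^{*}$ for any Cartesian square with $i$ a regular embedding and $h$ flat. Applied to the square with corners $Z\times Y\times X$, $Y\times X$, $Z\times Y$, $Y$, whose top map $\widetilde{\gamma_f}\colon Z\times Y\hookrightarrow Z\times Y\times X$ sends $(z,y)\mapsto (z,y,f(y))$, this gives $p_2^{*}\circ \gamma_f^{*} = \widetilde{\gamma_f}^{*}\circ \tilde{p}^{*}$. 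Functoriality of flat pullback then produces
\[
g^{*}\circ f^{*} \;=\; \gamma_g^{*}\circ \widetilde{\gamma_f}^{*}\circ \pi_X^{*},
\]
where $\pi_X\colon Z\times Y\times X\to X$ is the total projection. Setting $\Gamma \coloneqq \widetilde{\gamma_f}\circ \gamma_g\colon z\mapsto (z,g(z),f(g(z)))$, \Cref{lem-lem11.7} gives $\gamma_g^{*}\circ \widetilde{\gamma_f}^{*} = \Gamma^{*}$; now refactor $\Gamma = \iota\circ \gamma_{fg}$, where $\iota\colon Z\times X\hookrightarrow Z\times Y\times X$, $(z,x)\mapsto (z,g(z),x)$, is itself a regular embedding, so that a second application of \Cref{lem-lem11.7} gives $\Gamma^{*} = \gamma_{fg}^{*}\circ \iota^{*}$. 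Decomposing $\pi_X = p_{Z\times X\to X}\circ \pi'$ for the projection $\pi'\colon Z\times Y\times X\to Z\times X$ and using flat functoriality,
\[
g^{*}\circ f^{*} \;=\; \gamma_{fg}^{*}\circ \iota^{*}\circ (\pi')^{*}\circ p_{Z\times X\to X}^{*}.
\]

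It then remains to show $\iota^{*}\circ (\pi')^{*} = \mathrm{id}$, i.e.\ that the pullback along a regular-embedding section of a flat projection is a left inverse to the projection pullback. This follows from \Cref{lem-lem11.4} applied with $(U,V,W) = (Z\times X,\, Z\times Y\times X,\, Z\times X)$ and $p = \pi'$: the composite $q\colon N_U V\to U\xrightarrow{\mathrm{id}} W$ is the vector-bundle projection, which has relative dimension $\dim Y$ matching that of $\pi'$, so $J(V,U)\circ (\pi')^{*} = q^{*}$; inverting the $\mathbb{A}^{1}$-homotopy isomorphism implicit in the definition of $\iota^{*}$ then collapses this to the identity. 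Substituting gives $g^{*}\circ f^{*} = \gamma_{fg}^{*}\circ p_{Z\times X\to X}^{*} = (f\circ g)^{*}$, which also implies well-definedness (by applying the identity to $(f,\mathrm{id})$ realised through different auxiliary products). I expect the main obstacle to be this last identification: one must verify the relative-dimension hypothesis in \Cref{lem-lem11.4} and track the $\mathbb{A}^{1}$-homotopy inverses without introducing spurious factors, and the base-change step also requires care since \Cref{lem-lem11.3} is formulated for $J$ rather than for the full pullback $i^{*}$.
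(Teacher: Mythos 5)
Your overall strategy coincides with the paper's: reduce to the case of regular embeddings by a base-change argument over the product $Z\times Y\times X$ (your Cartesian square with $\widetilde{\gamma_f}=\id_Z\times\Gamma_f$ is exactly the paper's identity $p_1^\ast\circ i_3^\ast=i_2^\ast\circ p_2^\ast$), and then invoke Rost's deformation-space lemmas. Your treatment of the final identification is in fact slightly more explicit than the paper's: where the paper asserts $h^\ast=(i_2\circ i_1)^\ast\circ(p_3\circ p_2)^\ast$ "by applying \Cref{lem-functorility for redular embedding}", you spell out the section argument $\iota^\ast\circ(\pi')^\ast=\id$ via \Cref{lem-lem11.4}, and your verification of the relative-dimension hypothesis there is correct.

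The one genuine gap is your twofold claim that "\Cref{lem-lem11.7} gives $\gamma_g^\ast\circ\widetilde{\gamma_f}^\ast=\Gamma^\ast$" (and likewise $\Gamma^\ast=\gamma_{fg}^\ast\circ\iota^\ast$). What you are citing here is functoriality of the pullback for a composition of two regular embeddings, but \Cref{lem-lem11.7} only asserts the commutation of the deformation homomorphisms, $J(N_YX,N_YX|_Z)\circ J(X,Y)=J(N_ZX,N_ZY)\circ J(X,Z)$; it says nothing about the homotopy-invariance inverses $r(E,W)$ that enter the actual definition of the pullback via \Cref{def-pullback along lci}(4). Passing from the $J$-identity to $g^\ast\circ f^\ast=(f\circ g)^\ast$ for regular embeddings is precisely the content of the paper's Step 2: one must write $g^\ast\circ f^\ast=r(N_ZY,Z)\circ J(Y,Z)\circ r(N_YX,Y)\circ J(X,Y)$, commute $r(N_YX,Y)$ past $J(Y,Z)$ using \Cref{lem-lem11.3}, apply \Cref{lem-lem11.7}, and then collapse $J(N_ZX,N_ZY)\circ(N_ZX\to Z)^\ast\circ r(N_ZX,Z)$ using \Cref{lem-lem11.4}. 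So the regular-embedding case is not a one-line consequence of \Cref{lem-lem11.7} but a separate computation requiring all three lemmas; your proof as written cites the statement to be proved rather than proving it. All the needed ingredients are present in your toolkit, so the gap is fillable, but it is the heart of the argument and must be carried out.
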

\begin{proof}
The proof is inspired by \cite[Section 13]{Ros96}.  Let $h\coloneqq f\circ g\colon  Z\to X$. First, note that the statement holds for flat $f$ and $g$, as in this case, the pull-backs are exactly the usual pull-backs.

\emph{Step 1: reduction to the case where $f$ and $g$ are both regular embeddings.}

Consider the following commutative diagram
\begin{equation*}
\xymatrix{
Z\ar[r]^{i_1}&Z\times Y\ar[r]^{i_2}\ar[d]^{p_1}&Z\times Y\times X\ar[d]^{p_2}\\
&Y\ar[r]^{i_3}&Y\times X\ar[d]^{p_3}\\
&&X,
}
\end{equation*}
where $p_j(j=1,2,3)$ is the projection, $i_1\coloneqq \Gamma_g, i_3\coloneqq \Gamma_f$ and $i_2\coloneqq \id\times \Gamma_f$ are regular embeddings. We are going to show that $p_1^*\circ i_3^*=i_2^*\circ p_2^*$. If it holds and $(f\circ g)^*=g^*\circ f^*$ holds for any regular embedding, then $g^*\circ f^*=i_1^*\circ p_1^*\circ i_3^*\circ p_3^*=i_1^*\circ i_2^*\circ p_2^*\circ p_3^*=(i_1\circ i_2)^*\circ (p_3\circ p_2)^*$. Note that $h$ also factors as $Z\xrightarrow[i_2\circ i_1]{(\id,g,h)}Z\times Y\times X\xrightarrow{p_2}Y\times X\xrightarrow{p_3}X$, and one can show that $h^*=(i_2\circ i_1)^*\circ (p_3\circ p_2)^*$ by applying \Cref{lem-functorility for redular embedding}. Thus one can get $h^*=g^*\circ f^*$.

The equation $p_1^*\circ i_3^*=i_2^*\circ p_2^*$ comes from the following commutative diagram
\begin{equation*}
\xymatrix{H^i_{j,\nr}(Y\times X,n)\ar[d]^{p_2^*}&H^i_{j,\nr}(Y\times X,n)\ar[dd]^{\pi^*}\ar@{=}[l]\\
H^i_{j,\nr}(Z\times Y\times X,n)\ar[d]^{\pi^*_0}&\\
H^i_{j,\nr}(Z\times Y\times X\times \mathbb{G}_m,n)\ar[d]^{\{t\}}&H^i_{j,\nr}(Y\times X\times \mathbb{G}_m,n)\ar[d]^{\{t\}}\ar[l]_{\hspace{4mm}(p_2\times \id)^*}\\
H^{i+1}_{j,\nr}(Z\times Y\times X\times \mathbb{G}_m,n+1)\ar[d]^{-\delta}&H^{i+1}_{j,\nr}(Y\times X\times \mathbb{G}_m,n+1)\ar[d]^{-\delta}\ar[l]_{\hspace{4mm}(p_2\times \id)^*}\\
H^i_{j,\nr}(N_{Z\times Y}(Z\times Y\times X),n)&H^i_{j,\nr}(N_Y(Y\times X),n)\ar[l]_{\hspace{4mm}(p_2\times \id)^* }\\
H^i_{j,\nr}(Z\times Y,n)\ar[u]_{\sim}&H^i_{j\nr}(Y,n)\ar[l]_{p_1^*}\ar[u]_{\sim},
    }
\end{equation*}
where $\pi\colon  Y\times X\times \mathbb{G}_m\to Y\times X$, $\pi_0\colon  Z\times Y\times X\times \mathbb{G}_m\to Z\times Y\times X$ and the left $-\delta$ (\emph{resp.} the right $-\delta$) is induced by the deformation space $D(Z\times Y\times X,Z\times Y)$ (\emph{resp.} $D(Y\times X,Y)$). The commutativity of the above diagram is given by the flatness of $D(Z\times Y\times X,Z\times Y)\to D(Y\times X,Y)$ as the proof of \cite[Lemma 11.3]{Ros96}, and note that all pull-backs in this diagram are flat pull-backs.

\emph{Step 2: the case that $f$ and $g$ are regular embeddings .}

This is essentially the proof of \cite[Theorem 13.1]{Ros96}. For any vector bundle $E\to W$, we write $r(E,W)$ as the inverse of the $\mathbb{A}^1$-homotopy invariant $H^i_{j,\nr}(W,n)\xrightarrow{\cong}H^i_{j,\nr}(E,n)$. Then we have
\begin{eqnarray*}
    g^*\circ f^*&=&r(N_ZY,Z)\circ J(Y,Z)\circ r(N_YX,Y)\circ J(X,Y)\\
    &=&r(N(N_YX,N_YX|_Z),Z)\circ (N(N_YX,N_YX|_Z)\to N(Y,Z))^*\circ J(Y,Z)\circ r(N_YX,Y)\circ J(X,Y)\\
    &\overset{(1)}{=}&r(N(N_YX,N_YX|_Z),Z)\circ J(N_YX,N_YX|_Z)\circ (N(Y,X)\to Y)^*\circ r(N_YX,Y)\circ J(X,Y)\\
    &=&r(N(N_YX,N_YX|_Z),Z)\circ J(N_YX,N_YX|_Z)\circ J(X,Y)\\
    &\overset{(2)}{=}&r(N(N_YX,N_YX|_Z),Z)\circ J(N_ZX,N_ZY)\circ J(X,Z)\\
    &=&r(N(N_YX,N_YX|_Z),Z)\circ J(N_ZX,N_ZY)\circ(N_ZX\to Z)^*\circ r(N_ZX,Z)\circ J(X,Z)\\
    &\overset{(3)}{=}&r(N(N_YX,N_YX|_Z),Z)\circ (N(N_ZX,N_ZY)\to Z)^*\circ r(N_ZX,Z)\circ J(X,Z)\\
    &=&r(N(N_YX,N_YX|_Z),Z)\circ J(X,Z)\\
    &=&h^*,
\end{eqnarray*}
where (1) is given by \Cref{lem-lem11.3}, (2) is given by \Cref{lem-lem11.7} and (3) is given by \Cref{lem-lem11.4}.
\end{proof}
The localization sequence \cite[Proposition 3.2]{kok2023higher} is compatible with the pull-back of refined unramified cohomology in the following way.
\begin{lem}
    Let $f\colon X\to Y$ be a morphism of smooth algebraic schemes over $k$. Let $Z\subset Y$ be a smooth closed subset of codimension $c$ with open  complement $U$ with $\dim(U)=\dim(Y)$. Suppose that $Z'\coloneqq f^{-1}(Z)\subset X$ is smooth of codimension $c$, and its open complement $U'$ has dimension $\dim(X)$. Then we have the following commutative diagram of exact sequences
    \begin{equation*}
        \xymatrix{\ar[r]&H^{p-2c}_{q-c,\nr}(Z,n-c)\ar[r]\ar[d]&H^p_{q,\nr}(Y,n)\ar[r]\ar[d]&H^p_{q,\nr}(U,n)\ar[r]\ar[d]&H^{p+1-2c}_{q
+1-c,\nr}(Z,n-c)\ar[r]\ar[d]&\\
\ar[r]&H^{p-2c}_{q-c,\nr}(Z',n-c)\ar[r]&H^p_{q,\nr}(X,n)\ar[r]&H^p_{q,\nr}(U',n)\ar[r]&H^{p+1-2c}_{q+1-c,\nr}(Z',n-c)\ar[r]&,
    }
    \end{equation*}
    where all vertical arrows are induced by $f^*$.
\end{lem}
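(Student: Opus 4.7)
The plan is to factor $f = p \circ i$ with $i\colon X \hookrightarrow X\times Y$ the graph embedding and $p\colon X\times Y \to Y$ the projection, and to verify that the hypothesis passes through each step. Since $Z' = f^{-1}(Z)$ is smooth of codimension $c$ by assumption, $p^{-1}(Z) = X\times Z$ is smooth of codimension $c$ in $X\times Y$ and $i^{-1}(X\times Z) = Z'$, so the transversality condition is preserved. It therefore suffices to establish the claimed commutativity separately when $f$ is a flat morphism (the case of $p$) and when $f$ is a regular closed immersion of smooth schemes with the stated transversality condition (the case of $i$).

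The flat case is essentially formal: $p^\ast$ coincides with the naive restriction to $p^{-1}(U)$ described at the start of \Cref{section general pullbacks}, and the Gysin, restriction, and boundary maps in the localization sequence arise from residues at codimension-$c$ generic points, all of which are preserved by flat pullback. For the case of a regular closed embedding, I would unfold the definition $i^\ast = r(N,X) \circ (-\delta) \circ (\{t\}\cup) \circ \pi^\ast$ from \Cref{def-pullback along lci} and verify term-by-term compatibility with the localization triangle of $X\times Z \hookrightarrow X\times Y$. The map $\pi^\ast$ and the $\mathbb{A}^1$-homotopy inverse $r(N,X)$ are flat pullbacks, hence compatible by the flat case. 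The cup product $\{t\}\cup$ commutes with the boundary maps via the Leibniz-type identity $\partial(\{t\}\cup \alpha) = -\{t\}\cup \partial\alpha$, which holds because $\{t\}$ is defined globally on $X\times Y\times \G_m$ and has trivial residue along any subscheme pulled back from $X\times Y$.

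The main step, and the principal obstacle, is the commutation of the deformation-space boundary $-\delta$ for $N_X(X\times Y) \hookrightarrow D(X\times Y, X)$ with the localization boundary for $X\times Z\hookrightarrow X\times Y$: these are two boundary maps associated to two transversely intersecting smooth closed subschemes of $X\times Y$, the transversality being precisely the assumption that $Z' = X \cap (X\times Z)$ has codimension $c$ in $X$. I would handle this commutation by adapting the double-deformation-space technique from the proof of \Cref{lem-lem11.7}, which ultimately reduces to \cite[Lemma 2.4]{JS03}; once this commutation (up to sign) is established, the lemma follows by a diagram chase analogous to the final step of \Cref{thm-functoriality}, keeping track of the signs introduced by the Leibniz rule and by the swap of the two boundary operators.
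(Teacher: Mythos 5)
Your proposal is correct and follows essentially the same route as the paper: factor $f$ through the graph embedding and the projection, dispose of the flat case formally, and for the regular embedding unfold \Cref{def-pullback along lci} square by square, with the two anti-commutativities (the Leibniz rule for $\{t\}\cup$ against $\partial$, and the swap of the deformation-space boundary with the localization boundary, which the paper cites as \cite[Definition 4.3(5)]{kerz2012cohomological} and elsewhere handles via \cite[Lemma 2.4]{JS03}) cancelling to give commutativity. The only cosmetic difference is that you route the boundary-swap through the double-deformation-space argument of \Cref{lem-lem11.7}, which is the same underlying fact.
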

\begin{proof}
    The morphism $f$ can be factored as the composition of a regular embedding and a (flat) projection. Thus it suffices to consider that $f$ and $f|_{Z'}$ are  regular embeddings, as flat pull-back is naturally compatible with localization sequences.

    The commutativity of the first square and the second square is automatically induced by the compatibility of push-forward of closed immersion (\emph{resp.} open restriction) and flat pull-back (and residue map and cap product). We only discuss the third square here. Consider the following diagram
    \begin{equation*}
    \adjustbox{scale=.9}{
        \xymatrix{H^p_{q,\nr}(U,n)\ar[r]^{\hspace{-4mm}\pi^*}\ar[d]^{\delta}&H^p_{q,\nr}(U\times \G_m,n)\ar[r]^{\hspace{-4mm}\{t\}}\ar[d]^{\delta}&H^{p+1}_{q,\nr}(U\times \G_m,n+1)\ar[r]^{\hspace{4mm}-\delta}\ar[d]^{\delta}&H^{p+1}_{q,\nr}(N_{U'}U,n)\ar[d]^{\delta}\\
        H^{p+1-2c}_{q+1-c,\nr}(Z,n-c)\ar[r]^{\hspace{-4mm}\pi^*}&H^{p+1-2c}_{q+1-c,\nr}(Z\times \G_m,n-c)\ar[r]^{\hspace{-4mm}\{t\}}&H^{p+2-2c}_{q+1-c,\nr}(Z\times \G_m,n-c+1)\ar[r]^{\hspace{4mm}-\delta}&H^{p+1-2c}_{q+1-c,\nr}(N_{Z'}Z,n-c),
        }
        }
    \end{equation*}
    where the first square is commutative while the other two squares are anti-commutative by \cite[Definition 4.3(5)]{kerz2012cohomological}. Thus by the construction of pull-backs of refined unramified cohomology, the third square in this lemma commutes.
\end{proof}
\begin{prop}\label{prop-exact sequence is compatible with pullback}
Let $X\xrightarrow{f}Y$ be a morphism of smooth equi-dimensional schemes over a field $k$. Then the following diagram commutes
\begin{equation}\label{equ-exact sequence is compatible with pullback}
\xymatrix{H^{p+q-1}_{p-2,\nr}(Y,n)\ar[rr]^{\partial_Y}\ar[d]^{f^*}&&E^{p,q}_2(Y,n)\ar[rr]^{\iota_{Y,*}}\ar[d]^{f^{\bullet}}&&H^{p+q}_{p,\nr}(Y,n)\ar[d]^{f^*}\\
H^{p+q-1}_{p-2,\nr}(X,n)\ar[rr]^{\partial_X}&&E^{p,q}_2(X,n)\ar[rr]^{\iota_{X,*}}&&H^{p+q}_{p,\nr}(X,n),
}
\end{equation}
where $f^{\bullet}$ is the pull-back map on cycle modules defined in \cite[Section 12]{Ros96}.
\end{prop}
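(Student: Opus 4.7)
The plan is to follow the same reduction strategy used in the proof of Theorem \ref{thm-functoriality}. We factor $f$ as the composition $X \xrightarrow{\Gamma_f} X\times Y \xrightarrow{p} Y$ of the graph embedding and the projection. By the functoriality of $f^*$ (Theorem \ref{thm-functoriality}) together with the corresponding functoriality of Rost's pullback $f^\bullet$ on cycle modules (\cite[\S 12]{Ros96}), it is enough to verify commutativity of \eqref{equ-exact sequence is compatible with pullback} separately in two cases: $f$ flat, and $f$ a regular embedding between smooth schemes.

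The flat case should be essentially immediate. In this situation $f^*$ on refined unramified cohomology is just the naive restriction to preimages of opens containing the relevant coniveau subsets, and $f^\bullet$ on cycle modules is the corresponding termwise flat restriction on the Gersten complex. The boundary $\partial$ and the Gysin map $\iota_*$ of Proposition \ref{prop-schreieder's prop 7.35} are induced by the residue and cycle class maps of the coniveau filtration, which are manifestly natural under flat pullback (transversality being automatic, since we are restricting to open preimages), so both squares of \eqref{equ-exact sequence is compatible with pullback} commute on the nose.

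For a regular embedding $i\colon X \hookrightarrow Y$ of smooth schemes, the approach is to decompose $i^*$ into the four constituent operations of Definition \ref{def-pullback along lci}---the flat pullback $\pi^*$, the cup product $\{t\}\cup$, the residue $-\delta$ attached to the deformation space $D(Y,X)$, and the inverse of $\mathbb{A}^1$-homotopy invariance---and verify compatibility with $\partial$ and $\iota_*$ at each stage, keeping careful track of signs. The outermost two operations are flat and are handled by the flat case. The cup product with $\{t\}$ satisfies $\partial(t\cup\alpha)=-t\cup\partial(\alpha)$ and commutes with $\iota_*$ via the projection formula for étale Gysin, contributing a single sign that will eventually cancel against the sign in $-\delta$.

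The hard part will be the residue stage $-\delta$: showing that the deformation residue intertwines the coniveau boundary $\partial$ and commutes with the Gysin map $\iota_*$. For compatibility with $\partial$, the strategy is to reduce to the sign rule $\partial_1\partial_2=-\partial_2\partial_1$ for two commuting residues in distinct codimension-one directions, following \cite[Lemma 2.4]{JS03}; this is exactly the bookkeeping deployed in step (2) of the proof of Lemma \ref{lem-lem11.7}. For compatibility with $\iota_*$, the key point is that Rost's pullback $i^\bullet$ on the Gersten complex is designed precisely so as to match the geometric specialisation along $D(Y,X)$ to the normal bundle, which is exactly what our construction produces at the level of the $E_2^{p,q}$ term, so the agreement becomes a question of tracing through the identifications. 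Assembling the signs picked up from $\{t\}\cup$ and $-\delta$, they cancel to yield the required commutativity of \eqref{equ-exact sequence is compatible with pullback}.
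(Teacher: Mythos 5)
Your proposal matches the paper's proof: both reduce via the factorisation of $f^*$ to the flat case and the closed-immersion case, then decompose the closed-immersion pullback into the four stages of Definition~\ref{def-pullback along lci} and check each stage against $\partial$ and $\iota_*$, with the anti-commutativity from $\partial(t\cup\alpha)=-t\cup\partial(\alpha)$ cancelling against the anti-commutativity of the two residues from \cite[Lemma 2.4]{JS03}. This is essentially the same argument, with the same key lemmas and the same sign bookkeeping.
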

\begin{proof}
If $f$ is a flat morphism, one can easily get the commutativity of this diagram directly from the definition. By definition of $f^*$, it thus suffices to show the commutativity for $f$ a closed immersion. Let $\pi\colon Y\times \mathbb{G}_m\to Y$ and $p\colon N_XY\to X$ be projections.

Consider the following diagram
\begin{equation}
\adjustbox{scale=.9}{
\xymatrix{H^{p+q-1}_{p-2,\nr}(Y,n)\ar[rr]^{\partial}\ar[dd]^{\pi^*}&&E_2^{p,q}(Y,n)\ar[dd]^{\pi^*}\ar[rr]^{\iota_*}&&H^{p+q}_{p,\nr}(Y,n)\ar[dd]_{\pi^*}\\
    &(1)&&(5)&\\
    H^{p+q-1}_{p-2,\nr}(Y\times \mathbb{G}_m,n)\ar[rr]^{\partial}\ar[dd]^{\{t\}}&&E_2^{p,q}(Y\times \mathbb{G}_m,n)\ar[dd]^{\{t\}}\ar[rr]^{\iota_*}&&H^{p+q}_{p,\nr}(Y\times \mathbb{G}_m,n)\ar[dd]_{\{t\}}\\
    &(2)&&(6)&\\
    H^{p+q}_{p-2,\nr}(Y\times \mathbb{G}_m,n+1)\ar[rr]^{\partial}\ar[dd]^{-\delta^{p+q}_{p-2}}&&E_2^{p,q+1}(Y\times \mathbb{G}_m,n+1)\ar[dd]^{\partial}\ar[rr]^{\iota_*}&&H^{p+q+1}_{p,\nr}(Y\times \mathbb{G}_m,n+1)\ar[dd]_{-\delta^{p+q+1}_{p}}\\
    &(3)&&(7)&\\
    H^{p+q-1}_{p-2,\nr}(N_XY,n)\ar[rr]^{\partial}&&E_2^{p,q}(N_XY,n)\ar[rr]^{\iota_*}&&H^{p+q}_{p,\nr}(N_XY,n)\\
    &(4)&&(8)&\\
    H^{p+q-1}_{p-2,\nr}(X,n)\ar[rr]^{\partial}\ar[uu]_{p^*}&&E_2^{p,q}(X,n)\ar[uu]_{p^*}\ar[rr]^{\iota_*}&&H^{p+q}_{p,\nr}(X,n).\ar[uu]^{p^*}
}}
\end{equation}
As flat pull-backs of cohomology are compatible with localization sequences and purity isomorphisms, the squares $(1)$, $(4)$, $(5)$ and $(8)$ are commutative. Also, $(6)$ and $(7)$ commute as $\iota_*$ is compatible with $\{t\}$ and $\partial$. Here we remind the reader that the residue map $\partial$ in \cite{Ros96} is the negative of the residue map of cohomology with supports induced by the localization sequence.

The square $(2)$ is anti-commutative by \cite[Definition 1.1, R3e]{Ros96}. Moreover the square $(3)$ is also anti-commutative (see \cite[Lemma 2.4]{JS03}) and note that the residue map $\partial$ in \cite{Ros96} is the negative of the residue map of cohomology with supports from localization sequences. More specifically, let $\mathcal{X}\coloneqq D(Y,X)\setminus  W_{\rm sing}$ with $R\coloneqq (Y\times \mathbb{G}_m)\setminus U$ of codimension $p$ in $Y\times \mathbb{G}_m$ and $W$ is the closure of $R$ in $D(Y,X)$ (note that $R_{\rm sing}\subset W_{\rm sing}$, the codimension of $W_{\rm sing}$ is at least $p+1$ in $D(Y,X)$ and the codimension of $W_{\rm sing}\cap N_XY$ is at least $p+2$ in $N_X Y$). Let $\mathcal{V}\coloneqq \mathcal{X}\setminus W$, $\mathcal{U}\coloneqq \mathcal{X}\cap (Y\times \mathbb{G}_m)$, and let $\mathcal{Z}$ (resp. $\mathcal{Y}$) be the closed complement of $\mathcal{V}$ (resp. $\mathcal{U}$) in $\mathcal{X}$. By applying the proof of \cite[Lemma 2.4]{JS03} and noting that these schemes are smooth, we can use the purity isomorphism freely. Then one can get that the following diagram anti-commutes:
\begin{equation*}
\xymatrix{
H^{p+q}(U,n+1)\ar[rr]^{\partial\phantom{abdcfdd}}\ar[d]^{\partial}&& H^{q+1-p}(R\setminus R_{\rm sing},n+1-p)\ar[d]^{\partial}\\
    H^{p+q-1}({N_XY\setminus W},n)\ar[rr]^{\partial\phantom{abcdf}}&& H^{q-p}(W\setminus(W_{\rm sing}\cup R),n-p).
}
\end{equation*}
Taking a limit, one obtains the anti-commutativity of the square $(3)$. Therefore, by the definition of pullbacks, the diagram (\ref{equ-exact sequence is compatible with pullback}) is commutative.
\end{proof}
Now, we can extend the naturality of the long exact sequence in \cite[Theorem 1.1]{kok2023higher} as the following.
\begin{prop}\label{prop-Prop 2.15}
    Let $f\colon X\to Y$ be a morphism between smooth quasi-projective equi-dimensional schemes defined over a field $k$, and $M\coloneqq \Z/m\Z$, where $m$ is an integer invertible in $k$. Then there is a commutative diagram of exact sequences
    \begin{small}
     \begin{equation*}
        \xymatrix{\ar[r]&\CH^b(Y,n;M)\ar[r]^{\mathrm{cl}}\ar[d]^{f^*}&H^{2b-n}(Y,M(b))\ar[r]^{\rm restr.\phantom{abc}}\ar[d]^{f^*}& H^{2b-n}_{b-n-1,\nr}(Y,M(b))\ar[r]^{\theta}\ar[d]^{f^*}&\CH^b(Y,n-1;M)\ar[r]\ar[d]^{f^*}&\\
        \ar[r]&\CH^b(X,n;M)\ar[r]^{\mathrm{cl}}&H^{2b-n}(X,M(b))\ar[r]^{\rm restr.\phantom{abc}}& H^{2b-n}_{b-n-1,\nr}(X,M(b))\ar[r]^{\theta}&\CH^b(X,n-1;M)\ar[r]&.
        }
    \end{equation*}  
    \end{small}
\end{prop}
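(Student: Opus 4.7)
The plan is to deduce the commutativity of each of the three squares separately from the functoriality already established. By factoring $f$ as a regular embedding followed by a smooth projection, and appealing to \Cref{thm-functoriality} (together with the analogous functoriality of pullback on higher Chow groups and on \'etale cohomology), it is enough to treat the two cases in which $f$ is either flat or a regular embedding. For flat $f$, all four vertical maps are the usual flat pullbacks, and every square commutes by the classical compatibilities of the cycle class map, open restriction, and residue/boundary. So I focus on the case where $f$ is a regular embedding, in which case both $f^\ast$ on refined unramified cohomology (\Cref{def-pullback along lci}) and $f^\ast$ on higher Chow groups (via Bloch/Fulton's deformation-to-the-normal-cone recipe) are given by the same four-step construction $-\delta \circ \{t\} \circ \pi^\ast$ followed by $\A^1$-homotopy invariance.

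For the leftmost square, each of these four steps commutes with the cycle class map $\CH^b(-,n;M) \to H^{2b-n}(-,M)$: this is classical for $\pi^\ast$ and for the $\A^1$-homotopy invariance, and for multiplication by $\{t\}$ and for $-\delta$ it is the compatibility of the cycle class map with cup products and with the residue arising from the localization sequence. Composing the four commuting squares yields naturality with respect to $f^\ast$. For the middle square, the map $H^{2b-n}(X,M) \to H^{2b-n}_{b-n-1,\nr}(X,M)$ factors through the open restrictions $H^{2b-n}(X,M) \to H^{2b-n}(F_j X, M)$, and \Cref{def-pullback along lci} together with \Cref{pull-back ordinary cohomology} shows that each of $\pi^\ast$, $\{t\}$, $-\delta$ and the $\A^1$-homotopy inverse is natural with respect to such restrictions; hence so is $f^\ast$.

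For the rightmost square, the boundary map $H^{2b-n}_{b-n-1,\nr}(X,M) \to \CH^b(X,n-1;M)$ from \cite[Theorem 1.1]{kok2023higher} is identified, under Bloch's formula relating the Rost cycle module cohomology $E_2^{p,q}(X,n)$ to higher Chow groups with $M$-coefficients, with the map $\iota_\ast \circ \partial$ appearing in \Cref{prop-schreieder's prop 7.35}; this identification is valid over an algebraically closed field with $m$ invertible thanks to Beilinson--Lichtenbaum. Granting it, \Cref{prop-exact sequence is compatible with pullback} yields the commutativity at the $E_2$ level, and the naturality of Bloch's isomorphism under $f^\ast$ then transfers it to higher Chow groups.

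The principal obstacle, as I see it, is the compatibility step of matching the boundary of \cite[Theorem 1.1]{kok2023higher} with $\iota_\ast \circ \partial$ of \Cref{prop-schreieder's prop 7.35} under Bloch's identification, and checking that Bloch's isomorphism $E_2^{p,q} \cong \CH^{\bullet}(-,\bullet;M)$ is natural under the newly constructed $f^\ast$ for a regular embedding. Both sides are built from the same deformation-to-the-normal-cone recipe, so this ultimately reduces to a bookkeeping verification, but one that must be carried out carefully before the appeal to \Cref{prop-exact sequence is compatible with pullback} closes the argument.
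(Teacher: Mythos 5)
Your reduction to the flat case and the regular-embedding case, and your treatment of the first two squares, match the paper's argument: the leftmost square is the functoriality of the higher cycle class map, and the middle square follows from the compatibility of the restriction maps with the construction of \Cref{def-pullback along lci} (the paper invokes \Cref{prop-exact sequence is compatible with pullback} for this). The problem is the rightmost square, which is exactly where the paper has to do real work, and your proposed shortcut there does not go through.

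Concretely, you want to identify the connecting map $H^{2b-n}_{b-n-1,\nr}(X,M)\to\CH^b(X,n-1;M)$ with the boundary $\partial$ into $E_2^{b-n+1,b}(X,b)=H^{b-n+1}(X,\mathcal H^b_X(b))$ of the sequence (\ref{equ-the canonical long exact sequence}) and then cite \Cref{prop-exact sequence is compatible with pullback}. But $\CH^b(X,n-1;M)\cong H^{2b-n+1}_{\mathcal M}(X,M(b))$ is not the single group $H^{b-n+1}(X,\mathcal H^b(b))$ once $n\geq 3$: in the coniveau spectral sequence several graded pieces $H^p(X,\mathcal H^q(b))$ with $p+q=2b-n+1$, $b-n+1\leq p\leq q\leq b$ contribute, and even the piece $E^{b-n+1,b}$ arises as a quotient of the motivic group rather than as the target of a map out of $E_2$, so there is no canonical arrow $E_2^{b-n+1,b}\to\CH^b(X,n-1;M)$ through which the connecting map factors. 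This is not a bookkeeping issue; the claimed identification is unavailable in the generality of the statement (which is for all $n$). The paper instead uses the explicit description of the connecting map from \cite[Theorem 4.14]{kok2023higher} as the composite $H^{2b-n}(F_{b-n}X)\xrightarrow{\partial}\varinjlim_{\codim Z=b-n+1}H^{n-1}(Z)\xleftarrow{\sim}\varinjlim_{\codim Z=b-n+1}\CH^{n-1}(Z,n-1)\xrightarrow{\iota_*}\CH^b(X,n-1)$, where the limit runs over closed (possibly singular) subsets $Z$ and one works with \'etale Borel--Moore homology, and then verifies by an explicit diagram that each of $\{t\}$ and $-\delta$ in \Cref{def-pullback along lci} commutes with $\partial$, with the cycle class isomorphism on the $Z$'s, and with $\iota_*$. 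To repair your argument you would need to carry out that direct verification (or prove the factorization you assert, which would require substantially more than Beilinson--Lichtenbaum).
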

\begin{proof}
The naturality of commutativity between the first and second squares is natural, where the first is due to the functoriality of higher cycle class maps and the second is due to \Cref{prop-exact sequence is compatible with pullback}. So here, we only need to consider the commutativity of the third square. Since the naturality of the third square holds for flat maps as well, we can assume that $f$ is an embedding. 

First, let us review the construction of the connecting map 
\[\theta:H^{2b-n}_{b-n-1,\nr}(*,M(b))\to \CH^b(*,n-1;M).\] According to \cite[Equation (27)]{kok2023higher}, the connecting map $\theta$ is given by  the following commutative diagram:
\begin{equation}
\xymatrix{H^{2b-n}(F_{b-n}*,M(b))\ar[rrr]^{\partial\phantom{abcdeeee}}\ar[d]_{\theta}&&&\varinjlim\limits_{\codim Z=b-n+1}H^{n-1}(Z,M(n-1))\\
 \CH^b(*,n-1;M)&&&\varinjlim\limits_{\codim Z=b-n+1}\CH^{n-1}(Z,n-1;M)\ar[u]^{\sim}_{\mathrm{cl}}\ar[lll]_{\iota_*\phantom{ddddddddd}},
       }
\end{equation}
where $Z$ runs through all closed subset of `$*$' with $\codim(Z)=b-n+1$ and the left vertical arrow is an isomorphism by \cite[Proposition 4.9]{kok2023higher}.

Recalling that the third square commutes for flat morphisms, the naturality of the third square for smooth embeddings is due to the following commutative diagram (here we omit the coefficients $M$)
\begin{equation*}
\adjustbox{width=\textwidth}{
    \xymatrix{H^{2b-n}(F_{b-n}(Y\times \G_m),b)\ar[r]^{\partial\phantom{dd}}\ar[d]^{\{t\}}&\varinjlim\limits_{\codim Z=b-n+1}H^{n-1}(Z,n-1)\ar[d]^{\{t\}}& \varinjlim\limits_{\codim Z=b-n+1}\CH^{n-1}(Z,n-1)\ar[r]^{\hspace{8mm}\iota_*}\ar[l]_{\hspace{-4mm}\sim}\ar[d]^{\{t\}}& \CH^b(Y\times \G_m,n-1)\ar[d]^{\{t\}}\\
    H^{2b-n+1}(F_{b-n}(Y\times \G_m),b+1)\ar[r]^{\phantom{ddd}\partial}\ar[d]^{-\delta}&\varinjlim\limits_{\codim Z''=b-n+1}H^{n}(Z',n)\ar[d]^{-\delta}& \varinjlim\limits_{\codim Z'=b-n+1}\CH^{n}(Z',n)\ar[r]^{\hspace{8mm}\iota_*\phantom{dd}}\ar[l]_{\hspace{-4mm}\sim}\ar[d]^{-\delta}& \CH^{b+1}(Y\times \G_m,n)\ar[d]^{-\delta}\\
    H^{2b-n}(F_{b-n}(N_XY),b)\ar[r]^{\partial\phantom{ddddd}}&\varinjlim\limits_{\codim Z''=b-n+1}H^{n-1}(Z'',n-1)& \varinjlim\limits_{\codim Z''=b-n+1}\CH^{n-1}(Z'',n-1)\ar[r]^{\phantom{dd}\hspace{8mm}\iota_*}\ar[l]_{\phantom{dd}\hspace{-4mm}\sim}& \CH^b(N_XY,n-1),
      }}
\end{equation*}
where $Z$ (resp. $Z'$ and $Z''$) runs through all closed subsets of $Y\times \G_m$ (resp. $Y\times \G_m$ and $N_XY$) of codimension $b-n+1$. Note that here we implicitly used the étale Borel--Moore homology as in \cite{kok2023higher}, as the $Z$, $Z'$ and $Z''$ are not necessarily smooth anymore; the construction in \Cref{section general pullbacks} still works in this case.
\end{proof}
Finally, we write down the projection formula for refined unramified cohomology of smooth schemes.
\begin{lem}[Projection formula]\label{lem-projection formual for refined}
  Consider a fiber  product
  \begin{equation*}
      \xymatrix{Y'\ar[r]^{f'}\ar[d]^{g'}&X'\ar[d]^{g}\\
      Y\ar[r]^{f}&X,
      }
  \end{equation*}
  where $g$ is smooth and proper, and $X$ and $Y$ are smooth over $k$. Then
  \begin{equation*}
      f^*\circ g_*=g'_*\circ (f')^*.
  \end{equation*}
\end{lem}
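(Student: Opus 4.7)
The plan is to factor $f$ as the composition $Y \xrightarrow{\Gamma_f} Y \times X \xrightarrow{p} X$, where $\Gamma_f$ is the graph of $f$ (a regular embedding, since $Y$ and $X$ are smooth) and $p$ is the smooth projection. Base-changing this factorisation along $g$ produces two stacked cartesian squares: the right one base-changes $p$ to the smooth projection $Y \times X' \to X'$, and the left one base-changes $\Gamma_f$ along $\mathrm{id}_Y \times g$ to a regular embedding $Y' \hookrightarrow Y \times X'$; the vertical maps in both squares are smooth and proper (being base changes of $g$). By \Cref{thm-functoriality}, $f^* = \Gamma_f^* \circ p^*$ and a parallel factorisation applies to $f'$, so it suffices to establish the projection formula separately when $f$ is a smooth projection and when $f$ is a regular closed embedding between smooth schemes.

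The flat case is standard: $f^*$ acts as ordinary flat restriction on suitable opens containing $F_j Y$, and $g_*$ is defined via proper direct image in étale cohomology with supports. The classical flat base change formula for proper direct images then yields $f^* \circ g_* = g'_* \circ (f')^*$ on refined unramified cohomology.

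For the regular embedding case, $f' : Y' \hookrightarrow X'$ is again a regular embedding of the same codimension as $f$ (since $g$ smooth forces $Y'$ smooth), and $N_{Y'}X' \cong (g')^\ast N_Y X$. Moreover the deformation spaces satisfy $D(X', Y') = D(X, Y) \times_X X'$, so the induced map $D(X', Y') \to D(X, Y)$ is smooth and proper and restricts over $\G_m \subset \A^1$ to $\mathrm{id} \times g : X' \times \G_m \to X \times \G_m$ and over $0 \in \A^1$ to the bundle map $N_{Y'}X' \to N_Y X$. The strategy is to verify that each of the four steps in \Cref{def-pullback along lci} -- namely the flat pull-back $\pi^*$, the multiplication $\{t\}$, the boundary $-\delta$, and the inverse of the $\A^1$-homotopy invariance isomorphism -- commutes with the corresponding level of proper push-forward induced by $g$.

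The main obstacle is checking these four compatibilities, but each follows from a standard fact. Flat pull-back $\pi^*$ commutes with proper push-forward by flat base change. The cup product with $t$ commutes with proper push-forward by the projection formula for the cup product, as $t$ is pulled back from $\G_m$ and is therefore unaffected by $g_*$. The boundary $-\delta$ commutes with proper push-forward because the latter is compatible with the long exact sequence of the open--closed decomposition of $D(X, Y)$ into its $\G_m$- and $0$-fibres, applied along the smooth proper map $D(X', Y') \to D(X, Y)$. Finally, the inverse of the $\A^1$-homotopy invariance commutes with proper push-forward because $N_{Y'}X' \to N_Y X$ is itself a cartesian base change, so flat base change applies once more. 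Composing the four commutative squares yields $f^* \circ g_* = g'_* \circ (f')^*$.
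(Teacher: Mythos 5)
Your proposal is correct and follows essentially the same route as the paper, whose proof simply defers to Rost's Proposition 12.5: factor $f$ through its graph, settle the flat case by proper base change, and settle the regular-embedding case by checking that the deformation space is compatible with smooth base change and that each step of \Cref{def-pullback along lci} (the flat pull-back $\pi^*$, the cup product $\{t\}$, the boundary $-\delta$, and the inverse of $\A^1$-homotopy invariance) commutes with proper push-forward. Your write-up is in effect a spelled-out version of that same argument.
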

\begin{proof}
    See the proof of \cite[Proposition 12.5]{Ros96}.
\end{proof}

\section{Action of Cycles}
In this section, we redefine a correspondence action on the refined unramified cohomology groups, inspired by Fulton \cite{fulton2013intersection}. We also show that the correspondence action on refined unramified cohomology is compatible with the correspondence action on Bloch--Ogus spectral sequences for smooth proper schemes via the long exact sequence (\ref{equ-the canonical long exact sequence}).

Let us first review the construction of the correspondence action on Bloch--Ogus type spectral sequences due to Rost \cite{Ros96}, which is the same as the construction in \cite[Section 9]{CTV12}. Let $M$ be the \emph{cycle module} in \cite[Remark 1.11]{Ros96} or $M=K_*$ the \emph{Milnor $K$-rings}. If $X$ is equi-dimensional of
dimension $d_X$, we let $A^p(X,M)\coloneqq A_{d_X-p}(X,M)$, where $A_*(X,M)$ represents the homology groups of the cycle modules $C_*(X,M)$, $C_p(X,M)\coloneqq \coprod_{x\in X_{(p)}}M(x)$ and $X_{(p)}$ is the set of $p$-dimensional points of $X$ (\emph{ref.} \cite[Section 5]{Ros96}).
\begin{define}
Let $X$, $Y$ and $Z$ be algebraic equi-dimensional schemes over a field $k$, and suppose $Y$ is smooth and proper with dimension $d_Y$. We define the correspondence action
\begin{align*}
\CH^c(X\times Y)\otimes A^p(Y\times Z,M)&\to A^{c+p-d_Y}(X\times Z,M)\\
[\Gamma]\otimes [\alpha]&\mapsto[\Gamma]_*[\alpha]
\end{align*}
as the composition
\begin{align}\label{equ-corr actions 1}
\begin{split}
   \CH^c(X\times Y)\otimes A^p(Y\times Z,M)\xrightarrow{\times}A^{c+p}(X\times Y\times Y\times Z,M)\\
    \xrightarrow{\Delta^{\bullet}}A^{c+p}(X\times Y\times Z)\xrightarrow{p_*}A^{c+p-d_Y}(X\times Z,M),  
\end{split}  
\end{align}
where $\times$ is the cross product introduced in \cite[Section 14]{Ros96}, $\Delta^{\bullet}$ is the pull-back of cycle modules along the diagonal $X\times Y\times Z\hookrightarrow X\times Y\times Y\times Z$ from \cite[Section 12]{Ros96} and $p_*$ is the push-forward of cycle modules along the proper morphism $p\colon X\times Y\times Z\to X\times Z $ from \cite[Section 3]{Ros96}. 
\end{define}

\begin{rem}\label{remark compatibility}
If we moreover assume that $X$ and $Z$ are smooth, then using the compatibilities in \cite[(14.5)]{Ros96}, we see that this definition is the same as the composition 
\begin{align*}
\begin{split}
  \CH^c(X\times Y)\otimes A^p(Y\times Z,M)\xrightarrow{p_{XY}^*\otimes p_{YZ}^*}\CH^c(X\times Y\times Z)\otimes A^p(X\times Y\times Z,M) \xrightarrow{\times}\\
  A^{c+p}(X\times Y\times Z\times X\times Y\times Z,M)\xrightarrow{\Delta^{\bullet}}A^{c+p}(X\times Y\times Z)\xrightarrow{p_{XZ}*}A^{c+p-d_Y}(X\times Z,M).   
\end{split}
\end{align*}
\end{rem}
One can check that this correspondence action is compatible with the composition of correspondences by the work of Rost \cite{Ros96}. Moreover, this action induces the correspondence action on the second page $E_2^{p,q}$ of the Bloch--Ogus/local-to-global spectral sequence. The action is also compatible with the correspondence action on cohomology groups via the Bloch--Ogus spectral sequence by construction.

\subsection{Cross Product}\label{subsection Cross Product}
Here we define a \emph{cross-product action} of the Chow groups on the refined unramified cohomology groups.
\begin{define}[External/Cross Product I]\label{def-external/cross product I}
 Let $X$ and $Y$ be smooth equi-dimensional algebraic schemes over $k$. We have a well-defined pairing $$\CH^c(X)\otimes H^i_{j,\nr}(Y,n)\xrightarrow{\times}H^{i+2c}_{j+c,\nr}(X\times Y,n+c).$$   
\end{define}
\begin{proof}
For any $[\alpha]\in H^i_{j,\nr}(Y,n)$, let us take an open subset $U\subset Y$ with $\codim_Y(Y\setminus U)=j+2$ such that $\alpha\in H^i(U,n)$. For any $[\Gamma]\in \CH^c(X)$, taking any representative element $\Gamma$ with $|\Gamma|\coloneqq {\rm Supp}(\Gamma)$, then $\Gamma\times Y$ is a representative of $q^*([\Gamma])\in \CH^c(X\times Y)$ where $p\colon X\times Y\to Y$ and $q\colon X\times Y\to X$. Now we define $[\Gamma]\times [\alpha]$ as the class of the image of $\alpha$ under the following composition of morphisms (we denote as $\Gamma\times \alpha$), inspired by \cite[Section 4]{schreieder2022moving},
\begin{equation*}
    \xymatrix{H^i(U,n)\ar[d]^{p^*}\\
    H^i(X\times U,n)\ar[d]^{\cl^{X\times U}_{|\Gamma|\times Y}(q^*(\Gamma)|_{X\times U})\cup}\\
    H^{i+2c}_{|\Gamma|\times Y}(X\times U,n+c)\ar[d]^{\sim}_{\rm exc}\\
    H^{i+2c}_{|\Gamma|\times Y}((X\times Y)\setminus(|\Gamma|\times Z),n+c)\ar[d]^{\iota_*}\\
    H^{i+2c}((X\times Y)\setminus(|\Gamma|\times Z),n+c)\nospacepunct{,}
}
\end{equation*}
where $Z\coloneqq Y\setminus U$. Note that $\codim_{X\times Y}(|\Gamma|\times Z)=j+2+c$, so $\Gamma\times \alpha$ induces an element in $H^{i+2c}_{j+c,\nr}(X\times Y,n+c)$. Moreover, from this construction, one can see that cross product is additive for $\CH^c(X)$ and $H^i_{j,\nr}(Y,n)$, and it is independent of the choice of $\alpha$. After all, if we take $[\alpha_1]=[\alpha_2]=[\alpha]$ such that $\alpha_j\in H^i(U_j,n)$ for $j=1,2$, one can take a smaller $U\subset U_1\cap U_2$ such that $\alpha_1=\alpha_2$ on $U$ and then show that they have the same image in $H^{i+2c}((X\times Y)\setminus(|\Gamma|\times Z),n+c)$ with $Z\coloneqq Y\setminus U$.

Finally, let us show that the cross product is independent of the choice of $\Gamma$. Suppose that $[\Gamma_1]=[\Gamma_2]=[\Gamma]\in \CH^c(X)$. Then we can find a $(c-1)$-codimensional closed subset $W\subset X$ such that $\Gamma_1-\Gamma_2$ is rationally equivalent to zero on $W$. Consider the following commutative diagram
\begin{equation*}
\adjustbox{width=\textwidth}{
\xymatrix{H^i(U,n)\ar[d]^{p^*}\ar@{=}[r]&H^i(U,n)\ar[d]^{p^*}\ar@{=}[r]&H^i(U,n)\ar[d]^{p^*}\\
    H^i(X\times U,n)\ar[d]^{\cl^{X\times U}_{|\Gamma_1|\times Y}(q^*(\Gamma_1)|_{X\times U})\cup}\ar@{=}[r]&H^i(X\times U,n)\ar[d]^{\cl^{X\times U}_{W\times Y}(q^*(\Gamma_2)|_{X\times U})\cup}_{{\cl^{X\times U}_{W\times Y}(q^*(\Gamma_1)|_{X\times U})}\cup}\ar@{=}[r]&H^i(X\times U,n)\ar[d]^{\cl^{X\times U}_{|\Gamma_2|\times Y}(q^*(\Gamma_2)|_{X\times U})\cup}\\
    H^{i+2c}_{|\Gamma_1|\times Y}(X\times U,n+c)\ar[d]^{\sim}_{\rm exc}\ar[r]^{\iota_*}&H^{i+2c}_{W\times Y}(X\times U,n+c)\ar[d]^{\sim}_{\rm exc}&H^{i+2c}_{|\Gamma_2|\times Y}(X\times U,n+c)\ar[d]^{\sim}_{\rm exc}\ar[l]_{\iota_*}\\
    H^{i+2c}_{|\Gamma_1|\times Y}((X\times Y)\setminus(|\Gamma_1|\times Z),n+c)\ar[d]^{\iota_*}\ar[r]^{\rm \iota_*\circ restr.}&H^{i+2c}_{W\times Y}((X\times Y)\setminus(W\times Z),n+c)\ar[d]^{\iota_*}&H^{i+2c}_{|\Gamma_2|\times Y}((X\times Y)\setminus(|\Gamma_2|\times Z),n+c)\ar[d]^{\iota_*}\ar[l]_{\rm \iota_*\circ restr.}\\
    H^{i+2c}((X\times Y)\setminus(|\Gamma_1|\times Z),n+c)\ar[r]^{\rm restr.}&H^{i+2c}((X\times Y)\setminus(W\times Z),n+c)&H^{i+2c}((X\times Y)\setminus(|\Gamma_2|\times Z),n+c)\ar[l]_{\rm restr.}.
}
}
\end{equation*}
Note that $\codim_{X\times Y}(W\times Z)=j+2+c-1=j+c+1$, so $\Gamma_1\times \alpha$ and $\Gamma_2\times \alpha$ have the same image in $H^{i+2c}((X\times Y)\setminus(W\times Z),n+c)$ so also in $H^{i+2c}_{j+c,\nr}(X\times Y,n+c)$.
\end{proof}

\begin{define}[External/Cross Product II]\label{def-external/cross product II}
Let $X$ and $Y$ be smooth equi-dimensional algebraic schemes over $k$. We have another pairing 
\[
\CH^c(X)\otimes H^i_{j,\nr}(Y,n)\xrightarrow{\times'}H^{i+2c}_{j+c,\nr}(X\times Y,n+c).
\]
\end{define}
\begin{proof}
 We give the construction here and show the well-definedness later in \Cref{cross-2-welldef}. For any $[\alpha]\in H^i_{j,\nr}(Y,n)$, let us take an open subset $U\subset Y$ with $\codim_Y(Y\setminus U)=j+2$ such that $\alpha\in H^i(U,n)$. For any $[\Gamma]\in \CH^c(X)$ such that the representative element $\Gamma$ is an effective prime cycle with $|\Gamma|\coloneqq  {\rm Supp}(\Gamma)$, we let $\pi\colon  |\Gamma|\times U\to U$ be the projection. Now we define $[\Gamma]\times' [\alpha]$ (also denoted as $\Gamma\times' \alpha$) as the image of $\alpha$ under the composition defined in \ref{diagram-def-external/cross product II}, where $Z\coloneqq  Y\setminus U$. For general $[\Gamma]\in \CH^c(X)$ with representative element $\Gamma\coloneqq  \sum\limits_i n_i\Gamma_i$, where $\Gamma_i$ is effective prime, we define 
 \[
 [\Gamma]\times' [\alpha]\coloneqq  \sum\limits_i n_i ([\Gamma_i]\times' [\alpha])\in H^{i+2c}_{j+c,\nr}(X\times Y,n+c).\qedhere
 \]
\end{proof}
\begin{equation*}\label{diagram-def-external/cross product II}
\xymatrix{H^i(U,n)\ar[d]^{\pi^*}\\
H^i(|\Gamma|\times U,n)\ar[d]^{\rm Gysin}\\
 H^{i+2c}_{|\Gamma|\times U}(X\times U,n+c)\ar[d]^{\sim}_{\rm exc}\\
 H^{i+2c}_{|\Gamma|\times U}((X\times Y)\setminus (|\Gamma|\times Z),n+c)\ar[d]^{\iota_*}\\
 H^{i+2c}((X\times Y)\setminus (|\Gamma|\times Z),n+c).
    }\tag{Morphism \ref{def-external/cross product II}}
\end{equation*}
To show that the definition of $\times'$ is well-defined, we first recall the projection formula. A proof using motives is given in \cite[Proposition 2.2.2]{levine1998mixed}. For the convenience of the reader, we also give a more elementary proof here. 
\begin{lem}[Projection Formula]\label{lem-Gysin homomorphism}
    Let X be smooth of pure dimension $d$ over $k$ and let $i\colon Z\hookrightarrow  Y\hookrightarrow X$ be closed embeddings, where $Y$ is of pure codimension $c$ in $X$. There exists a Gysin homomorphism $P_{X,Y}\colon H^a_Z(Y,n)\to H^{a+2c}_Z(X,n+c)$ as in \cite[Definition 4.3(6)(7)]{kerz2012cohomological}. In particular, if $Z=Y$ is integral and we write $i_*\coloneqq  P_{X,Z}$, then $i_*(1)=\cl^X_Z(Z)$ for $a=n=0$ and $i_*\alpha\cup \beta=i_*(\alpha\cup i^*\beta)$ for any $\alpha\in H^a(Z,n)$ and any $\beta \in H^b(X,m)$.
\end{lem}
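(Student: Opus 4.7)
My plan is to derive both identities from the axiomatic description of the Gysin map as cup product with the fundamental class, which is the standard elementary approach (an alternative motivic proof appears in \cite[Proposition 2.2.2]{levine1998mixed}).

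For the identity $i_*(1) = \cl^X_Z(Z)$: since $Z$ is integral of codimension $c$ in the smooth scheme $X$, its smooth locus $Z_{sm}$ is open and dense in $Z$, and the restriction
\[
H^{2c}_Z(X, c) \hookrightarrow H^{2c}_{Z_{sm}}(X \setminus Z_{sing}, c)
\]
is injective, because $Z_{sing}$ has codimension strictly greater than $c$ in $X$. It therefore suffices to verify the identity on the smooth locus, where $\cl^X_{Z_{sm}}(Z_{sm})$ is by convention the image of $1 \in H^0(Z_{sm}, 0)$ under Gabber's purity isomorphism, which also defines $P_{X, Z_{sm}}$ in \cite[Definition 4.3(6)]{kerz2012cohomological}. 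The identity follows tautologically.

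For the projection formula, the essential input is that $P_{X,Z}$ is realized as a cup product with the fundamental class: $i_*(\alpha)$ equals the image of $\alpha \in H^a(Z, n)$ under the pairing with $\cl^X_Z(Z) \in H^{2c}_Z(X, c)$ provided by the axioms of twisted cohomology with supports. Granting this, the proof reduces to the compatibility
\[
\beta \cup \cl^X_Z(Z) = i^*\beta \cup \cl^X_Z(Z) \quad \text{in } H^{b+2c}_Z(X, m+c),
\]
which holds because the cup product of an unsupported class $\beta \in H^b(X, m)$ with a class supported on $Z$ factors through the restriction $i^*$. Combined with associativity and graded-commutativity of cup products, one obtains
\[
i_*(\alpha) \cup \beta = (\alpha \cup \cl^X_Z(Z)) \cup \beta = (\alpha \cup i^*\beta) \cup \cl^X_Z(Z) = i_*(\alpha \cup i^*\beta),
\]
as desired.

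The main obstacle is the cup-product presentation of $P_{X,Z}$ together with the "pullback compatibility" of cup products with supported classes. Both are standard features of twisted \'etale cohomology with supports, cf.~\cite[Definition 4.3(6)(7)]{kerz2012cohomological} and \cite{BO74}; once these are in place, the proof is a formal manipulation of cup products, and the singular case reduces to the smooth case via the injectivity argument of the second paragraph.
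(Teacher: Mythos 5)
Your argument is correct in substance and, for the identity $i_*(1)=\cl^X_Z(Z)$, is essentially identical to the paper's: both reduce to the smooth locus of $Z$ via semi-purity (the vanishing of $H^{2c}_{Z_{\rm sing}}(X,c)$ because $\codim_X Z_{\rm sing}>c$) and then invoke the definition of the cycle class through Gabber's purity on $Z_{\rm sm}$. For the projection formula your route is a legitimate reformulation rather than a genuinely different proof: you reduce everything to two facts, namely (a) that $P_{X,Z}$ is cup product with the fundamental class under the pairing $H^a(Z,n)\otimes H^{2c}_Z(X,c)\to H^{a+2c}_Z(X,n+c)$, and (b) that the pairing of an unsupported class $\beta\in H^b(X,m)$ with a $Z$-supported class factors through $i^*\beta$. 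But these two facts are precisely what the paper's Step 3 establishes, by unwinding the derived-category definitions: the factoring (b) is the sheaf-level projection formula $(\pi_X)_*(i_*A\otimes B)\cong(\pi_X)_*i_*(A\otimes i^*B)$, and the identification (a) amounts to the triangle identity for the adjunction $i^*\dashv i_*$ (the composite $i^*\to i^*i_*i^*\to i^*$ being the identity). So where the paper carries out the diagram chases, you declare the same compatibilities to be "standard features" and cite \cite{kerz2012cohomological} and \cite{BO74}; that is defensible (the paper itself notes the motivic proof in \cite[Proposition 2.2.2]{levine1998mixed}), but be aware that the lemma exists in the paper exactly to make these verifications explicit, so a referee may ask you to prove (a) and (b) rather than cite them. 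Two smaller omissions: you do not address the construction of $P_{X,Y}\colon H^a_Z(Y,n)\to H^{a+2c}_Z(X,n+c)$ for a possibly singular $Y$ of pure codimension $c$ (which requires the fundamental class of $Y$ in Borel--Moore homology, i.e.\ the map $\pi^*_YA(n)\to i_Y^!\pi_X^*A(n+c)[2c]$, and is the first assertion of the lemma), and when commuting $\beta$ past $\cl^X_Z(Z)$ you implicitly use that the latter has even degree $2c$ so that no sign appears; it is worth saying so.
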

\begin{proof}
We write $A(n)\coloneqq  \Z/\ell^r\Z(n)$ for any prime $\ell\neq {\rm char}(k)$.
All functors in this proof are derived functors in the derived category of abelian \'etale sheaves. Let $\pi_Z$(\emph{resp.} $\pi_X$, $\pi_Y$) be the structure map of $Z$ (\emph{resp.} $X$, $Y$) over $k$.

\emph{Step 1: Construction of Gysin homomorphism $P_{X,Y}$.}

Write $i_Y\colon Y\hookrightarrow X$ and $i_Z\colon Z\hookrightarrow Y$ for the closed immersions, and consider the morphism in the derived category of $Y$
\begin{equation}\label{equ-construction of Gysin homo}
    \pi^*_YA(n)\to \pi^!_YA(n-d+c)[2c-2d]=i_Y^!\pi_X^!A(n-d+c)[2c-2d]\cong i_Y^!\pi_X^*A(c+n)[2c],
\end{equation}
where the first arrow is induced by the fundamental class of $Y$ and the last isomorphism is induced by the canonical isomorphism $\pi^*_X(d)[2d]\xrightarrow{\cong}\pi_X^!$ as $X$ is smooth of pure dimension $d$. Applying functor $R\Gamma(Z,i_Z^!-)$ to the morphism (\ref{equ-construction of Gysin homo}) and noting that there is an isomorphism $R\Gamma_Z(X,-)\cong R\Gamma(Z,i^!(-))$, we get the morphism $P_{X,Y}\colon H^a_Z(Y,n)\to H^{a+2c}_Z(X,n+c)$. By the definition of $P_{X,Y}$, if $Y$ is smooth, the first arrow in (\ref{equ-construction of Gysin homo}) is also an isomorphism, which is exactly the construction of purity isomorphism (or Gysin homomorphism). Moreover, if $Z=Y$, the image of $i_*(1)$ for $a=n=0$ is the cycle class $\cl(Z)\coloneqq  \cl_Z^X(Z)$ (\emph{ref.} \cite[Section 1]{fujiwara2002proof}). Indeed if $Z=Y$, this constructs the morphism $H^{BM}_{2d_Z}(Z,d_Z)\to H^{BM}_{2d_Z}(X,d_Z)$, which maps the generator of $H^{BM}_{2d_Z}(Z,d_Z)\cong H^0(Z_{sm},0)$ to $\cl^X_Z(Z)$ (in $H^{2(d_X-d_Z)}(X)$ if $X$ is smooth). One can also get this statement as follows: writing $j\colon Z_{\rm sm}\hookrightarrow Z$ and applying $\id\to j_*j^*$ followed with $R\Gamma(Z,-)$ to (\ref{equ-construction of Gysin homo}), we can get a commutative diagram
\begin{equation*}
    \xymatrix{H^0(Z,0)\ar[r]^{i_*}\ar[d]^{\sim}&H^{2c}_{Z}(X,c)\ar[d]^{\sim}\\
    H^0(Z_{\rm sm},0)\ar[r]^{\hspace{-7mm}\sim}&H^{2c}_{Z_{\rm sm}}(X\setminus Z_{\rm sing},c),
     }
\end{equation*}
where the left isomorphism is induced by semi-purity, hence we get $i_*(1)=\cl_Z^X(Z)$ by the usual definition of cycle class via $H^0(Z_{\rm sm},0)\to H^{2c}_{Z_{\rm sm}}(X\setminus Z_{\rm sing},c)$. In particular, $i_*\colon H^0(Z,0)\to H^{2c}_Z(X,c)$ is an isomorphism.

\emph{Step 2: Compatibility with localization sequences.} 

Now if $i_Z\colon Z\xrightarrow{\Bar{i}}Z'\xrightarrow{i_{Z'}}Y$ are closed immersions, with $j\colon Z'\setminus Z\to Z'$ and $j_Y\colon Y\setminus Z\to Y$ open complements, and with $i'\colon Z'\setminus Z\to Y\setminus Z$ closed, then the localization sequence is defined as follows. We start with the distinguished triangle
\begin{equation}\label{equ-distinguished triangles}
\to \Bar{i}_\ast \Bar{i}^!\to\id\to j_\ast j^!\xrightarrow{+1},    
\end{equation}
then plugging in $i_{Z'}^!\pi_Y^\ast A$ and applying $R(\pi_{Z'})_\ast$ ($=R\Gamma(Z',-)$) gives the distinguished triangle 
\[
\to (\pi_Z)_\ast i_Z^!\pi_Y^\ast A\to (\pi_{Z'})_\ast i_{Z'}^!\pi_Y^\ast A\to(\pi_{Z'\setminus Z})_\ast(i')^!\pi_{Y\setminus Z}^\ast A\xrightarrow{+1}.
\]
Hence taking cohomology gives the long exact sequence
\[
\cdots\to H_Z^a(Y,n)\to H_{Z'}^a(Y,n)\to H_{Z'\setminus Z}^a(Y\setminus Z,n)\to H^{a+1}_{Z}(Y,n)\to\cdots.
\]
So we get immediately a morphism of distinguished triangles by plugging in the morphism $i^!_{Z'}\pi_Y^\ast A(n)\to i_{Z'/X}^!\pi_X^\ast A(n+c)[2c]$ from above, where $i_{Z'/X}\colon Z'\hookrightarrow X$, into the distinguished triangle (\ref{equ-distinguished triangles}). And after taking cohomology, we thus get a commutative diagram of desired long exact sequences.

\emph{Step 3: Projection formula.} 

We write $A_X\coloneqq  \pi_X^\ast A$ for the sheaf $A$ on $X$ (to make notation less heavy, we stop writing the Tate twists here). The cup product (say on $X$) is defined via the composition
\[
(\pi_X)_\ast A_X[a]\otimes(\pi_X)_\ast A_X[b]\to(\pi_X)_\ast(A_X[a]\otimes A_X[b])\overset{\sim}{\to}(\pi_X)_\ast A_X[a+b],
\]
where the first map is the composition of the projection formula $$(\pi_X)_\ast A_X[a]\otimes (\pi_X)_\ast A_X[b]\overset{\sim}{\to}(\pi_X)_\ast(A_X[a]\otimes\pi_X^\ast(\pi_X)_\ast A_X[b])$$ composed with the natural map $\pi_X^\ast(\pi_X)_\ast\to\id$. 
 Let $i\colon Z\to X$ be the inclusion of a closed of codimension $c$ from before. We have a commutative diagram
\[
\adjustbox{width=\textwidth}{
\begin{tikzcd}
(\pi_Z)_* A_Z[a]\otimes(\pi_X)_* A_X[b]\ar[r,"\id\otimes i^\ast"]\ar[d,"i_\ast\otimes\id"]&(\pi_Z)_* A_Z[a]\otimes(\pi_Z)_* A_Z[b]\ar[r]\ar[d,"i_\ast\otimes\id"]&(\pi_Z)_*(A_Z[a]\otimes A_Z[b])\ar[d,"(\pi_Z)_*(i_\ast\otimes\id)"]\\
(\pi_Z)_* i^! A_X[a+2c]\otimes(\pi_X)_* A_X[b]\ar[r,"\id\otimes i^\ast"]&(\pi_Z)_* i^! A_X[a+2c]\otimes(\pi_Z)_* A_Z[b]\ar[r]&(\pi_Z)_*(i^!A_X[a+2c]\otimes A_Z[b]).
\end{tikzcd}
}
\]
Now if we compose the top horizontal map with $$(\pi_Z)_*(A_Z[a]\otimes A_Z[b])\overset{\sim}{\to}(\pi_Z)_* A_Z[a+b]\overset{i_\ast}{\to}(\pi_Z)_*i^!A_X[a+b+2c],$$ we obtain $i_\ast(\alpha\cup i^\ast\beta)$. We also note that $i_\ast\alpha\cup\beta$ is defined by the composition
\begin{align*}
(\pi_Z)_* A_Z[a]\otimes(\pi_X)_* A_X[b]&\xrightarrow{i_\ast\otimes\id} (\pi_Z)_* i^! A_X[a+2c]\otimes(\pi_X)_* A_X[b]\to\pi_X(i_\ast i^! A_X[a+2c]\otimes A_X[b])\\
&\xrightarrow{\sim} (\pi_X)_* (i_\ast i^!A_X[a+2c+b])=(\pi_Z)_* i^! A_X[a+b+2c].
\end{align*}
Using the first commutative diagram, these two maps are related by the following commutative diagrams. First of all, we can extend the first diagram on the right by the square
\[
\begin{tikzcd}
(\pi_Z)_\ast(A_Z[a]\otimes A_Z[b])\ar[r,"\cong"]\ar[d,"(\pi_Z)_\ast(i_\ast\otimes\id)"]&(\pi_Z)_\ast A_Z[a+b]\ar[d,"i_\ast"]\\
(\pi_Z)_\ast(i^!A_X[a+2c]\otimes A_Z[b])\ar[r,"\cong"]&(\pi_Z)_\ast i^!A_X[a+b+2c]\nospacepunct{.}
\end{tikzcd}
\]
Connecting it to $i_\ast\alpha\cup\beta$ via the following diagram
\[
\begin{tikzcd}
(\pi_Z)_\ast i^!A_X[a+2c]\otimes(\pi_X)_* A_X[b]\ar[r,"\id\otimes i^\ast"]\ar[d]&(\pi_Z)_\ast i^!A_X[a+2c]\otimes (\pi_Z)_\ast A_Z[b]\ar[d]\\
(\pi_X)_\ast(i_\ast i^! A_X[a+2c]\otimes A_X[b])\ar[r,"\text{pr.~form.}"]\ar[d,"\cong"]&(\pi_Z)_\ast(i^!A_X[a+2c]\otimes A_Z[b])\ar[d,"\cong"]\\
(\pi_X)_\ast(i_\ast i^!A_X[a+2c+b])\ar[r,equal]&(\pi_Z)_\ast i^!A_X[a+b+2c]\nospacepunct{.}
\end{tikzcd}
\]
The crucial part is showing the commutativity of the top square of the last diagram, so let us give some details. For convenience, we write $A=i^!A_X[a+2c]$ and $B=A_X[b]$ (in fact, we show commutativity for arbitrary $A$ and $B$). Consider the following diagram
\[
\begin{tikzcd}
(\pi_X)_\ast i_\ast A\otimes(\pi_X)_\ast B\ar[r,"\id\otimes i^\ast"]\ar[d]&(\pi_X)_\ast i_\ast A\otimes(\pi_X)_\ast i_\ast i^\ast B\ar[d]&~\\
(\pi_X)_\ast(i_\ast A\otimes B)\ar[r,"\id\otimes i^\ast"]\ar[d,"\cong"]\ar[d,swap,"\text{pr.~form.}"]&(\pi_X)_\ast(i_\ast A\otimes i_\ast i^\ast B)\ar[d,swap,"\text{pr.~form.}"]\ar[d,"\cong"]&~\\
(\pi_X)_\ast i_\ast(A\otimes i^\ast B)\ar[r,"\id\otimes i^\ast"]&(\pi_X)_\ast i_\ast(A\otimes i^\ast i_\ast i^\ast B)\ar[r,"i^\ast i_\ast\to\id"]&(\pi_X)_\ast i_\ast (A\otimes i^\ast B)\nospacepunct{.}
\end{tikzcd}
\]
This diagram clearly commutes. In fact, it decomposes the first square of the previous diagram as follows. Note that the composition of the right vertical maps with $i^\ast i_\ast\to\id$ equals the map $(\pi_X)_\ast i_\ast A\otimes (\pi_X)_\ast i_\ast i^\ast B=(\pi_Z)_\ast A\otimes(\pi_Z)_\ast i^\ast B\to(\pi_Z)_\ast(A\otimes i^\ast B)$, so the right vertical map of the first square in the diagram before. The left two vertical maps are the other two. Now we conclude by noting that the composition of the bottom horizontal map is the identity as we have an adjunction $i^\ast\vdash i_\ast$, so that the composition $i^\ast\xrightarrow{\id\to i_\ast i^\ast}i^\ast i_\ast i^\ast\xrightarrow{i^\ast i_\ast\to\id}i^\ast $ is the identity.
\end{proof}

\begin{cor}\label{cross-2-welldef}
\Cref{def-external/cross product II} is well-defined and bi-additive, and coincides with the one in \Cref{def-external/cross product I}.
\end{cor}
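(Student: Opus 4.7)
The plan is to reduce the well-definedness and bi-additivity of $\times'$ to those of $\times$ (already established in \Cref{def-external/cross product I}) by showing that the two pairings agree on effective prime cycles. The main tool is the projection formula in \Cref{lem-Gysin homomorphism}.

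First I would fix an effective prime cycle $\Gamma\in Z^c(X)$, a class $[\alpha]\in H^i_{j,\nr}(Y,n)$ represented by $\alpha\in H^i(U,n)$ with $\codim_Y(Y\setminus U)=j+2$, and compare both constructions before excision and pushforward. Write $i\colon |\Gamma|\times U\hookrightarrow X\times U$ for the closed embedding, $\pi\colon |\Gamma|\times U\to U$ and $p\colon X\times U\to U$ for the projections, so that $p\circ i=\pi$ and hence $i^\ast p^\ast=\pi^\ast$. By \Cref{lem-Gysin homomorphism}, $i_\ast(1)=\cl^{X\times U}_{|\Gamma|\times U}(|\Gamma|\times U)$ in $H^{2c}_{|\Gamma|\times U}(X\times U,c)$, and the projection formula gives
\begin{equation*}
i_\ast(\pi^\ast\alpha)=i_\ast(1\cup i^\ast p^\ast\alpha)=i_\ast(1)\cup p^\ast\alpha=\cl^{X\times U}_{|\Gamma|\times U}(|\Gamma|\times U)\cup p^\ast\alpha.
\end{equation*}
Since $\Gamma$ is a prime cycle, $q^\ast(\Gamma)|_{X\times U}=|\Gamma|\times U$ as an algebraic cycle, so the right-hand side is exactly the class produced by the second vertical arrow in \Cref{def-external/cross product I}. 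Excision and pushforward to $H^{i+2c}((X\times Y)\setminus(|\Gamma|\times Z),n+c)$ are the same operations in both constructions, which shows $[\Gamma]\times'[\alpha]=[\Gamma]\times[\alpha]$ for effective prime $\Gamma$.

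Next I would extend by $\Z$-linearity. Any $[\Gamma]\in\CH^c(X)$ admits a representative $\Gamma=\sum_i n_i\Gamma_i$ with $\Gamma_i$ prime effective; by definition of $\times'$ and the equality just proved,
\begin{equation*}
[\Gamma]\times'[\alpha]=\sum_i n_i\bigl([\Gamma_i]\times'[\alpha]\bigr)=\sum_i n_i\bigl([\Gamma_i]\times[\alpha]\bigr)=\Bigl[\sum_i n_i\Gamma_i\Bigr]\times[\alpha]=[\Gamma]\times[\alpha],
\end{equation*}
where the last two equalities use the bi-additivity and rational-equivalence invariance of $\times$. In particular, $[\Gamma]\times'[\alpha]$ depends only on $[\Gamma]\in\CH^c(X)$ and not on its chosen presentation as a sum of primes, and independence from the choice of the representative $\alpha\in H^i(U,n)$ follows either from the analogous argument given for $\times$ in \Cref{def-external/cross product I} or, more directly, from the identification with $\times$. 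Bi-additivity of $\times'$ is then immediate from that of $\times$.

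The only substantive step is the identification $i_\ast(\pi^\ast\alpha)=\cl^{X\times U}_{|\Gamma|\times U}(|\Gamma|\times U)\cup p^\ast\alpha$ for a possibly singular prime $|\Gamma|$, which is exactly what \Cref{lem-Gysin homomorphism} provides (the Gysin map $i_\ast$ is defined there for arbitrary closed embeddings into a smooth variety, and the projection formula as well as the identity $i_\ast(1)=\cl^{X\times U}_{|\Gamma|\times U}(|\Gamma|\times U)$ both hold without smoothness of $|\Gamma|$). Once this is in hand, all remaining assertions reduce formally to properties of $\times$ already verified in \Cref{def-external/cross product I}.
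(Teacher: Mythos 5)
Your proposal is correct and follows essentially the same route as the paper: the paper also reduces to effective prime cycles, observes that well-definedness and bi-additivity can be inherited from the first cross product, and establishes the agreement of the two constructions via the commutativity of the square $i_*(\pi^*\alpha)=i_*(1)\cup p^*\alpha=\cl^{X\times U}_{|\Gamma|\times U}(q^*(\Gamma)|_{X\times U})\cup p^*\alpha$, which is exactly the projection formula of \Cref{lem-Gysin homomorphism}. The only difference is that you spell out the $\Z$-linear extension and the multiplicity-one identification $q^*(\Gamma)|_{X\times U}=|\Gamma|\times U$ more explicitly, which the paper leaves implicit.
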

\begin{proof}
It is easy to see that this pairing is bi-additive. Note that the \Cref{def-external/cross product I} is well-defined, so it suffices to show that these two definitions coincide. This is given by the following commutative diagram (here $\Gamma$ is prime)
\begin{equation*}
\xymatrix{H^i(U,n)\ar@{=}[r]\ar[d]^{p^*}&H^i(U,n)\ar[d]^{\pi^*}\\
H^i(X\times U,n)\ar[r]^{i^*}\ar[d]^{\cl^{X\times U}_{|\Gamma|\times Y}(q^*(\Gamma)|_{X\times U})\cup}&H^i(|\Gamma|\times U,n)\ar[d]^{i_*}\\
H^{i+2c}_{|\Gamma|\times U}(X\times U,n+c)\ar@{=}[r]&H^{i+2c}_{|\Gamma|\times U}(X\times U,n+c),
}
\end{equation*}
where $i\colon |\Gamma|\times U\hookrightarrow X\times U$ is a closed immersion and the last square commutes by \Cref{lem-Gysin homomorphism}.
\end{proof}

\begin{prop}\label{prop-composition-cross}
Let $X$, $Y$ and $Z$ be smooth proper schemes and $\alpha\in H^i_{j,\nr}(Z,n)$, $\Gamma\in \CH^c(Y)$ and $\Gamma'\in \CH^{c'}(X)$. Then $(\Gamma'\times\Gamma)\times\alpha=\Gamma'\times(\Gamma\times\alpha)\in H^{i+2(c+c')}_{j+c+c',\nr}(X\times Y\times Z,n+c+c')$.
\end{prop}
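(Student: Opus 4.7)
The plan is to identify both sides with a single explicit class on a common open subset of $X\times Y\times Z$, after passing to \Cref{def-external/cross product I}. By \Cref{cross-2-welldef} we are allowed to use the cup-product description throughout, and by bi-additivity of both pairings we may assume that $\Gamma$ and $\Gamma'$ are prime.

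Fix a representative $\tilde\alpha\in H^i(U,n)$ of $\alpha$ with $U\subset Z$ open and $\codim_Z(Z\setminus U)=j+2$. Set $V\coloneqq (Y\times Z)\setminus(|\Gamma|\times (Z\setminus U))$ and
\[
W\coloneqq (X\times Y\times Z)\setminus\bigl(|\Gamma'|\times|\Gamma|\times (Z\setminus U)\bigr).
\]
Both $(\Gamma'\times\Gamma)\times\alpha$ and $\Gamma'\times(\Gamma\times\alpha)$ can, before forgetting supports, be lifted to classes in $H^{i+2(c+c')}_{|\Gamma'|\times|\Gamma|\times U}(W,n+c+c')$. For the left-hand side this is immediate from \Cref{def-external/cross product I}. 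For the right-hand side one first observes that $\Gamma\times\alpha$ refines canonically to a class in $H^{i+2c}_{|\Gamma|\times U}(V,n+c)$, so that pulling back and cupping with $\cl^{X\times V}_{|\Gamma'|\times V}(q_X^*\Gamma')$ gives a class whose support is the scheme-theoretic intersection $(X\times |\Gamma|\times U)\cap (|\Gamma'|\times V)=|\Gamma'|\times|\Gamma|\times U$. Since $W\setminus (X\times Y\times U)=(X\times Y\setminus |\Gamma'|\times|\Gamma|)\times(Z\setminus U)$ is disjoint from $|\Gamma'|\times|\Gamma|\times U$, excision gives a canonical isomorphism
\[
H^{i+2(c+c')}_{|\Gamma'|\times|\Gamma|\times U}(W,n+c+c')\xrightarrow{\sim} H^{i+2(c+c')}_{|\Gamma'|\times|\Gamma|\times U}(X\times Y\times U,n+c+c'),
\]
so it suffices to compare both classes in the latter group.

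Writing $p_U\colon X\times Y\times U\to U$, $q_X,q_Y,q$ for the projections from $X\times Y\times U$ to $X$, $Y$, $X\times Y$ respectively, the left-hand side unfolds directly as
\[
p_U^*\tilde\alpha\cup\cl^{X\times Y\times U}_{|\Gamma'|\times|\Gamma|\times U}\bigl(q^*(\Gamma'\times\Gamma)\bigr),
\]
while the right-hand side, after restricting from $X\times V$ to the open $X\times Y\times U\subset X\times V$ and applying compatibility of pullback with cup product and with cycle classes, becomes
\[
p_U^*\tilde\alpha\cup\cl^{X\times Y\times U}_{X\times |\Gamma|\times U}(q_Y^*\Gamma)\cup\cl^{X\times Y\times U}_{|\Gamma'|\times Y\times U}(q_X^*\Gamma').
\]
The comparison now follows from three standard ingredients: the external product formula $q^*(\Gamma'\times\Gamma)=q_X^*\Gamma'\cdot q_Y^*\Gamma$ at the level of cycles, the multiplicativity of the cycle class with supports $\cl(q_X^*\Gamma'\cdot q_Y^*\Gamma)=\cl(q_X^*\Gamma')\cup\cl(q_Y^*\Gamma)$ for the proper intersection of $|\Gamma'|\times Y\times U$ and $X\times|\Gamma|\times U$ along $|\Gamma'|\times|\Gamma|\times U$, and graded-commutativity of cup product (which introduces no sign, as both cycle classes sit in even degree).

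The main obstacle is the support bookkeeping in the preceding paragraph: one must verify that the right-hand side, whose natural support is a priori the larger $|\Gamma'|\times V$, actually refines to a class supported on $|\Gamma'|\times|\Gamma|\times U$, so that both sides can be compared in one and the same cohomology group via excision. Once this is in place, the remaining identification reduces to standard naturality of cup products and cycle classes, and presents no further difficulty.
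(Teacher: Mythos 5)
Your proposal is correct and follows essentially the same route as the paper: both reduce, via \Cref{def-external/cross product I} and bi-additivity, to comparing the two explicit cup-product expressions $p_U^*\tilde\alpha\cup\cl(q^*(\Gamma'\times\Gamma))$ and $p_U^*\tilde\alpha\cup\cl(q_Y^*\Gamma)\cup\cl(q_X^*\Gamma')$ in $H^{i+2(c+c')}_{|\Gamma'|\times|\Gamma|\times U}(X\times Y\times U,n+c+c')$, concluding by multiplicativity of cycle classes with supports and compatibility of cup products with pullbacks (the paper cites (C4e) and (C4f) of Schreieder for exactly these facts). Your explicit excision and support bookkeeping is a slightly more detailed rendering of the same diagram the paper draws.
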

\begin{proof}
Suppose $\alpha$ is represented by some element in $H^i(U,n)$ with $F_{j+1}Z\subset U$. Writing out the definition of \Cref{def-external/cross product I}, this proposition essentially follows from the commutativity of the following diagram
\[
\begin{tikzcd}
H^i(U,n)\ar[rr, "q_U^\ast"]\ar[d,"p_U^\ast"]&~&H^p_{q,\nr}(X\times Y\times U,n)\ar[rr,"\cl^{X\times Y\times U}_{\Gamma'\times\Gamma'\times U}(\Gamma'\times\Gamma\times U)\cup"]&~&H^{i+2(c+c')}_{\Gamma'\times\Gamma\times U}(X\times Y\times U,n+c+c')\\
H^i(Y\times U,n)\ar[rr,"\cl^{Y\times U}_{\Gamma\times U}(p_Y^\ast\Gamma)\cup"]&~&H^{i+2c}_{\Gamma\times U}(Y\times U,n+c)\ar[rr,"q^\ast"]&~&H^{i+2c}_{X\times\Gamma\times U}(X\times Y\times U,n+c)\ar[u,"\cl^{X\times Y\times U}_{\Gamma'\times Y\times U}(q_X^\ast\Gamma')\cup"]\nospacepunct{,}
\end{tikzcd}
\]
where the maps are defined by 
\[
\begin{tikzcd}
X&X\times Y\times U\ar[l,"q_X"]\ar[d,"q"]\ar[dl,"q_Y"]\ar[dr,"q_U"]&\\
Y&Y\times U\ar[l,"p_Y"]\ar[r,"p_U"]&U\nospacepunct{.}
\end{tikzcd}
\]
The commutativity of the above diagram then follows from compatibility properties of the cup-product given in \cite[(C4e) and (C4f)]{schreieder2022moving}.
\end{proof}

\begin{prop}\label{prop-exact sequence is compatible with cross product}
Let $f\colon X\to Y$ be a morphism of smooth equi-dimensional schemes over a field $k$. For any $[\Gamma]\in \CH^c(X)$, the following diagram commutes
\begin{equation}\label{equ-exact sequence is compatible with cross product}
 \xymatrix{H^{p+q-1}_{p-2,\nr}(Y,n)\ar[rr]^{\partial_Y}\ar[d]^{[\Gamma]\times}&&E^{p,q}_2(Y,n)\ar[rr]^{\iota_{Y,*}}\ar[d]^{[\Gamma]\times}&&H^{p+q}_{p,\nr}(Y,n)\ar[d]^{[\Gamma]\times}\\
H^{p+q-1+2c}_{p-2+c,\nr}(X\times Y,n+c)\ar[rr]^{\partial_{X\times Y}}&&E^{p+c,q+c}_2(X\times Y,n+c)\ar[rr]^{\iota_{X\times Y,*}}&&H^{p+q+2c}_{p+c,\nr}(X\times Y,n+c).
}   
\end{equation}
\end{prop}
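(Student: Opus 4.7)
The plan is to decompose the cross product $[\Gamma]\times -$ as a composition of a flat pull-back and a cup product with a cycle class with supports, and then show that each of these pieces commutes (up to sign) with the boundary $\partial$ and the Gysin push-forward $\iota_\ast$ of the long exact sequence \eqref{equ-the canonical long exact sequence}. This mirrors the strategy of \Cref{prop-exact sequence is compatible with pullback}.

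First, I would unwind the cross product. Using \Cref{def-external/cross product I}, for $[\alpha]\in H^{p+q-1}_{p-2,\nr}(Y,n)$ represented by $\alpha\in H^{p+q-1}(U,n)$ with $F_{p-1}Y\subset U$, the class $[\Gamma]\times[\alpha]$ is represented (after excision) by
\[
\iota_\ast\!\left(\cl^{X\times U}_{|\Gamma|\times Y}(q_X^\ast\Gamma|_{X\times U})\cup q_Y^\ast\alpha\right)\in H^{p+q-1+2c}\bigl((X\times Y)\setminus(|\Gamma|\times Z),\,n+c\bigr),
\]
where $q_X,q_Y$ are the projections from $X\times Y$ and $Z=Y\setminus U$. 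Hence up to the obvious excision isomorphisms, $[\Gamma]\times-$ factors as the flat pull-back $q_Y^\ast$, the cup product with a cycle class with supports pulled back from $X$, and the Gysin/restriction map to the relevant open subset.

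For the right-hand square (the $\iota_\ast$-square), I would invoke the projection formula $\iota_\ast(\alpha\cup\iota^\ast\beta)=\iota_\ast(\alpha)\cup\beta$ of \Cref{lem-Gysin homomorphism}, together with the naturality of the Gysin push-forward under flat pull-back. On the $E_2$-page, by \Cref{remark compatibility} the cross product on cycle modules is defined by a pull-back along the two projections composed with a diagonal Gysin pushforward; under the identification $E_2^{p,q}\cong H^{p+q}_{p-1,\nr}/H^{p+q}_{p,\nr}$ from \eqref{equ-the canonical long exact sequence} this matches cup product with $\cl(q_X^\ast\Gamma)$ after flat pull-back by $q_Y$, which is exactly the refined-unramified description above. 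Combined with the projection formula, this gives the commutativity of the $\iota_\ast$-square.

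For the left-hand square ($\partial$), I would write down a diagram in the spirit of the proof of \Cref{prop-exact sequence is compatible with pullback}: flat pull-back along $q_Y$ commutes with the boundary of a localization sequence, and cup product with a class pulled back from the ambient space commutes with the residue by the derivation rule, using the compatibilities (C4e), (C4f) of \cite{schreieder2022moving}. Since the cycle class $\cl^{X\times U}_{|\Gamma|\times Y}(q_X^\ast\Gamma|_{X\times U})$ extends to the whole ambient space $X\times Y$ (it is pulled back from $X$), the residue of the cycle class itself vanishes and only the $\gamma\cup\partial(\alpha)$ summand survives, yielding the desired compatibility.

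The main obstacle will be verifying that the supported cycle class $\cl^{X\times U}_{|\Gamma|\times Y}(q_X^\ast\Gamma|_{X\times U})$, which lives in cohomology with supports and not in ordinary cohomology, genuinely commutes with the residue $\partial$ of the localization sequence. This requires an argument analogous to \cite[Lemma 2.4]{JS03} adapted to cohomology with supports, combined with careful sign-tracking as in the big diagram appearing in the proof of \Cref{prop-exact sequence is compatible with pullback}. Matching Rost's cross product on the $E_2$-page with the supported-cup-product description on the refined unramified side will be the delicate point, but once that is done, the rest is a diagram chase over the squares (1)--(8) of that proof.
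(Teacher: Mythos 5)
Your overall architecture is the same as the paper's: decompose $[\Gamma]\times-$ into a flat pull-back, a cup product with a supported cycle class (equivalently a Gysin map), excision and $\iota_\ast$, and then check compatibility with $\partial$ and with the Gysin push-forward square by square, using the projection formula of \Cref{lem-Gysin homomorphism} and the fact that cupping with a class defined on the whole ambient space commutes with the residue of the localization sequence. That part of your plan is sound and matches the paper's Diagram~I and the left-hand square of its argument.

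The genuine gap is the step you yourself flag and then defer: identifying Rost's cycle-module cross product on $E_2^{p,q}$ with your cohomological description. This is not a routine diagram chase but the bulk of the paper's proof. The issue is that $E_2^{p,q}$ is computed from the complexes $C_\ast(\cdot,M)=\coprod_x M(x)$, so the vertical map $[\Gamma]\times$ in the middle column is defined pointwise, via the generic points of $|\Gamma|\times\overline{\{y\}}$ weighted by the lengths of the local rings there; neither $|\Gamma|$ nor $\overline{\{y\}}$ is smooth, so one cannot simply quote purity or write $E_2^{p,q}$ as a quotient of refined unramified groups as you suggest. The paper handles this by passing to the smooth loci $\Gamma_{\rm sm}$, $Z_{\rm sm}$, $\overline{\{y\}}\setminus Z$ (after checking the discarded loci have large enough codimension), applying purity there, and explicitly matching the supported cycle class (resp.\ Gysin map) with the sum $\sum_i m_i p_i^\ast$ (resp.\ $\sum_i n_i\pi_i^\ast$) over generic points with multiplicities --- this is the content of its Diagrams~II and~IV. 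Moreover, for the $\iota_\ast$-square the paper deliberately switches to the Gysin-based \Cref{def-external/cross product II} and works at the $E_1$ level with classes supported at a single point $y$, because $\iota_\ast$ is only defined there; your plan to run everything through \Cref{def-external/cross product I} and the projection formula would still need that reduction to page~$1$ and the multiplicity bookkeeping. Until that matching is written out, the commutativity of both the middle vertical arrow's squares remains unproved.
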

\begin{proof}
Suppose that $\Gamma$ is an effective prime cycle with generic point $\gamma$. For any $[\alpha]\in H^{p+q-1}_{p-2,\nr}(Y,n)$, there exists an open subset $U\subset Y$ with complement $Z$ such that $Z$ has pure codimension $p$ in $Y$ and $\alpha\in H^{p+q-1}(U,n)$. 

First, let us show the commutativity of the left square. Taking $V\coloneqq Y\setminus Z_{\rm sing}$, then $F_pY\subset V$. And we have the commutative \ref{diagram-exact sequence is compatible with cross product1}, where the morphisms $r$ are the restriction morphisms for open subsets. On the other hand, let $z_i(1\leq i\leq r)$ be the generic/maximal points of $X\times Z$, $m_i\coloneqq {\rm length}(\mathcal{O}_{X\times Z,z_i})$ and let $p_i$ be the induced morphism $\overline{\{z_i\}}\to Z$ by the morphism $p$. Then we get another commutative \ref{diagram-exact sequence is compatible with cross product2}, where $i\colon  \Gamma_{\rm sm}\times Z_{\rm sm}\hookrightarrow (X\setminus \Gamma_{\rm sing})\times Z_{\rm sm}$, and here the commutativity of the last square is given by projection formula for smooth schemes. Combining these two diagrams, after applying the purity isomorphism and recalling the definition of $[\Gamma]\times\colon  E_2^{p,q}(Y,n)\to E_2^{p+c,q+c}(X\times Y,n+c)$ in \cite[Section 14]{Ros96}, we get that the left square commutes.
   \begin{equation*} \label{diagram-exact sequence is compatible with cross product1}
   \adjustbox{width=\textwidth}{
    \xymatrix{H^{p+q-1}(U,n)\ar[r]^{\partial}\ar[d]^{p^*}&H^{p+q}_Z(Y,n)\ar[d]^{p^*}\ar[r]^{r}&H^{p+q}_{Z_{\rm sm}}(V,n)\ar[d]^{p^*}\\
    H^{p+q-1}(X\times U,n)\ar[r]^{\partial}\ar[d]^{\cl^{X\times U}(\Gamma\times Y|_{U\times Y})}&H^{p+q}_{X\times Z}(X\times Y,n)\ar[d]^{\cl^{X\times Y}(\Gamma\times Y)}\ar[r]^{r}&H^{p+q}_{X\times Z_{\rm sm}}(X\times V,n)\ar[d]^{\cl^{X\times V}(\Gamma\times Y)}\\
    H^{p+q-1+2c}_{\Gamma\times Y}(X\times U,n+c)\ar[d]^{\rm exc}_{\sim}&H^{p+q+2c}_{\Gamma\times Z}(X\times Y,n+c)\ar@{=}[d]\ar[r]^{r}&H^{p+q+2c}_{\Gamma\times Z_{\rm sm}}(X\times V,n+c)\ar@{=}[d]\\
    H^{p+q-1+2c}_{\Gamma\times Y}(X\times Y \setminus (\Gamma\times Z),n+c)\ar[r]^{\hspace{10mm}\partial}\ar[d]^{\iota_*}&H^{p+q+2c}_{\Gamma\times Z}(X\times Y,n+c)\ar@{=}[d]\ar[r]^{r}&H^{p+q+2c}_{\Gamma\times Z_{\rm sm}}(X\times V,n+c)\ar@{=}[d]\\
    H^{p+q-1+2c}(X\times Y \setminus (\Gamma\times Z),n+c)\ar[r]^{\hspace{10mm}\partial}&H^{p+q+2c}_{\Gamma\times Z}(X\times Y,n+c)\ar[r]^{r}&H^{p+q+2c}_{\Gamma\times Z_{\rm sm}}(X\times V,n+c)
 }\tag{Diagram \ref{prop-exact sequence is compatible with cross product} I}
 }
\end{equation*}
 \begin{equation*}\label{diagram-exact sequence is compatible with cross product2}
\adjustbox{scale=.7}{
\xymatrix{H^{p+q}_{Z_{\rm sm}}(V,n)\ar[d]^{p^*}&H^{q-p}(Z_{\rm sm},n-p)\ar[l]_{\sim}\ar[d]^{\sum_i (m_ip_i^*)}\ar@{=}[r]&H^{q-p}(Z_{\rm sm},n-p)\ar[d]_{\sum_i (m_ip_i^*)}\ar@{=}[r]&H^{q-p}(Z_{\rm sm},n-p)\ar[d]_{\sum_i (m_ip_i^*)}\\
    H^{p+q}_{X\times Z_{\rm sm}}(X\times V,n)\ar[d]^{\cl^{X\times V}(\Gamma\times Y)}&H^{q-p}(X\times Z_{\rm sm},n-p)\ar[d]^{\cl^{X\times Z_{\rm sm}}(\Gamma\times Z_{\rm sm})}\ar[l]_{\sim}\ar[r]^{r}&H^{q-p}(X\setminus \Gamma_{\rm sing}\times Z_{\rm sm},n-p)\ar[d]_{\cl^{X\setminus \Gamma_{\rm sing}\times Z_{\rm sm}}(\Gamma\times Z_{\rm sm})}\ar[r]^{i^*}&H^{q-p}(\Gamma_{\rm sm}\times Z_{\rm sm},n-p)\ar[d]^{i_*}\\
    H^{p+q+2c}_{\Gamma\times Z_{\rm sm}}(X\times V,n+c)&H^{q-p+2c}_{\Gamma\times Z_{\rm sm}}(X\times Z_{\rm sm},n+c-p)\ar[l]_{\hspace{-4mm}\sim}\ar[r]^{\hspace{-5mm}r}&H^{q-p+2c}_{\Gamma_{\rm sm}\times Z_{\rm sm}}(X\setminus \Gamma_{\rm sing}\times Z_{\rm sm},n+c-p)\ar@{=}[r]&H^{q-p+2c}_{\Gamma_{\rm sm}\times Z_{\rm sm}}(X\setminus \Gamma_{\rm sing}\times Z_{\rm sm},n+c-p)
}
}\tag{Diagram \ref{prop-exact sequence is compatible with cross product} II}
\end{equation*}
\begin{equation*}    \label{diagram-exact sequence is compatible with cross product3}
\adjustbox{scale=.9}{
\xymatrix{H^{p+q}_{\overline{\{y\}}\cap U}(U,n)\ar[r]^{\iota_*}\ar[d]^{\pi^*}&H^{p+q}(U,n)\ar[d]^{\pi^*}\\
H^{p+q}_{\Gamma\times ({\overline{\{y\}}\cap U})}(\Gamma\times U,n)\ar[r]^{\iota_*}\ar[d]^{\rm Gysin}&H^{p+q}(\Gamma\times U,n)\ar[d]^{\rm Gysin}\\
H^{p+q+2c}_{\Gamma\times ({\overline{\{y\}}\cap U})}(X\times U,n+c)\ar[r]^{\iota_*}\ar[d]^{\rm exc}_{\sim}&H^{p+q+2c}_{\Gamma\times U}(X\times U,n+c)\ar[d]^{\rm exc}_{\sim}\\
H^{p+q+2c}_{\Gamma\times ({\overline{\{y\}}\cap U})}(X\times Y\setminus(\Gamma\times Z),n+c)\ar[r]^{\iota_*}\ar@{=}[d]&H^{p+q+2c}_{\Gamma\times U}(X\times Y\setminus(\Gamma\times Z),n+c)\ar[d]^{\iota_*}\\
 H^{p+q+2c}_{\Gamma\times ({\overline{\{y\}}\cap U})}(X\times Y\setminus(\Gamma\times Z),n+c)\ar[r]^{\iota_*}&H^{p+q+2c}(X\times Y\setminus(\Gamma\times Z),n+c),
    }\tag{Diagram \ref{prop-exact sequence is compatible with cross product}III}
    }
\end{equation*}
Now, we are going to show the right one is commutative as well. For this purpose, let us apply our second definition for external/cross product, i.e., \Cref{def-external/cross product II}. Noting that $\iota_*$ is defined on the level of the first page $E_1^{p,q}$ of the Bloch--Ogus spectral sequence (see \cite[Proposition 7.35]{schreieder2020infinite}) and the middle vertical arrow is defined on the level of page $1$ as well, it suffices to show that the correspondence products at the first page are compatible. For any $\alpha\in E_1^{p,q}(Y,n)$ which only has non-zero component at $y\in Y^{(p)}$, we can find a closed subset $Z\subset \overline{\{y\}}$, containing the singularities of $\overline{\{y\}}$, such that $\alpha\in H^{q-p}(\overline{\{y\}}\setminus Z,n-p)\cong H^{p+q}_{\overline{\{y\}}\cap U}(U,n)$ with $U\coloneqq Y\setminus Z$ by the purity isomorphism. For the theories of cohomology with supports, we indeed have the commutative \ref{diagram-exact sequence is compatible with cross product3}, where $\pi\colon  \Gamma\times Y\to Y$ is the projection. Now, change the cohomology of supports on the left hand side to cohomology via the Gysin homomorphism (which is the purity isomorphism for smooth pair) after taking limits; we get the following commutative \ref{diagram-exact sequence is compatible with cross product4}, where $u_i(1\leq i\leq s)$ is the generic/maximal points of $\Gamma\times ({\overline{\{y\}}\cap U})$, $n_i\coloneqq {\rm length}(\mathcal{O}_{\Gamma\times ({\overline{\{y\}}\cap U}),u_i})$ and $\pi_*$ is the induced map $\overline{\{u_i\}}\to U$ along $\pi$. 

Recalling the definition of cross product of cycle modules in \cite[Section 14]{Ros96}, this commutative diagram tells us that for such $\alpha\in H^{q-p}(y,n-p)\subset E_1^{p,q}(Y,n)$, the element $[\Gamma]\times \alpha$ can be represented by an element in $H^{p+q+2c}_{\Gamma\times (\overline{\{y\}}\cap U)}(X\times U,n+c)$, which can be also viewed as an element in $H^{p+q+2c}_{\Gamma\times ({\overline{\{y\}}\cap U})}(X\times Y\setminus(\Gamma\times Z),n+c)$ (note that excision does not change $E_1^{p,q}$); say $\beta$. And by \ref{diagram-exact sequence is compatible with cross product3},  the $\alpha$ maps as follows
\begin{equation*}
\xymatrix{\alpha\ar@{|->}[rr]^{\iota_*\phantom{dddd}}\ar@{|->}[d]&&\omega\coloneqq \iota_*\alpha\ar@{|->}[d]\\
    \beta\ar@{|->}[rr]^{\iota_*\phantom{dddd}}&&\theta\coloneqq \iota_*\beta
    }
\end{equation*}
where the right arrow is given by \Cref{def-external/cross product II}, hence it is commutative by the commutativity of \ref{diagram-exact sequence is compatible with cross product3}. Therefore, since the cross product is bi-additive, we get the commutativity of the right square as well.

\begin{equation*}\label{diagram-exact sequence is compatible with cross product4}
\adjustbox{scale=.9}{
\xymatrix{H^{q-p}({\overline{\{y\}}\cap U},n-p)\ar[rr]^{\sim}_{\rm Gysin}\ar[d]^{\sum_i (n_i\pi_i^*)}&&H^{p+q}_{\overline{\{y\}}\cap U}(U,n)\ar[d]^{\pi^*}\ar@{=}[r]&H^{p+q}_{\overline{\{y\}}\cap U}(U,n)\ar[dd]^{\pi^*}\\
H^{q-p}(\Gamma\times ({\overline{\{y\}}\cap U}),n-p)\ar[d]^{\rm restr.}&&H^{p+q}_{\Gamma\times (\overline{\{y\}}\cap U)}(\Gamma\times U,n)\ar[d]^{\rm restr.}&\\
H^{q-p}(\Gamma_{\rm sm}\times ({\overline{\{y\}}\cap U}),n-p)\ar[d]^{\rm limit}&&H^{p+q}_{\Gamma_{\rm sm}\times (\overline{\{y\}}\cap U)}(\Gamma_{\rm sm}\times U,n+c)\ar[ll]^{\sim}_{\rm (purity/Gysin)^{-1}}\ar[d]^{\sim}_{\rm purity/Gysin}&H^{p+q}_{\Gamma\times (\overline{\{y\}}\cap U)}(\Gamma\times U,n)\ar[l]^{\phantom{dddd}\rm restr.}\ar[d]^{\rm Gysin}\\
\bigoplus_{u_i}H^{q-p}(u_i,n-p)&&H^{p+q+2c}_{\Gamma_{\rm sm}\times (\overline{\{y\}}\cap U)}(X\setminus \Gamma_{\rm sing}\times U,n
+c)\ar[ll]^{\hspace{-6mm}\text{limit }\circ \text{ purity}^{-1}}&H^{p+q+2c}_{\Gamma\times (\overline{\{y\}}\cap U)}(X\times U,n+c)\ar[l]^{\hspace{8mm}\rm restr.}
}
}\tag{Diagram \ref{prop-exact sequence is compatible with cross product}IV}
\end{equation*}
\end{proof}

Next we show that the cross-product is compatible with pull-back and push-forward maps.
\begin{lem}\label{lem-cross product for proper push}
Let $f\colon X\to Y$ and $g\colon Z\to W$ be proper morphisms between smooth schemes. Then $(f\times g)_*([\Gamma]\times [\alpha])=f_*[\Gamma]\times g_*[\alpha]$ for any $[\Gamma]\in \CH^c(X)$ and any $[\alpha]\in H^i_{j,\nr}(Z,n)$.
\end{lem}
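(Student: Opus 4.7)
The plan is to decompose $f\times g=(f\times \id_W)\circ(\id_X\times g)$ as a composition of proper morphisms between smooth schemes and, by functoriality of proper push-forward, reduce to the two statements $(\id_X\times g)_\ast(\Gamma\times\alpha)=\Gamma\times g_\ast\alpha$ and $(f\times\id_W)_\ast(\Gamma\times\beta)=f_\ast\Gamma\times\beta$. These two cases are completely parallel, so I would focus on the first. By additivity of the cross product I would further reduce to the case where $\Gamma$ is an effective prime cycle.

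Working with \Cref{def-external/cross product I}, I would choose an open $V\subset Z$ with $F_{j+1}Z\subset V$ such that $\alpha\in H^i(V,n)$, and an open $V'\subset W$ with $V=g^{-1}(V')$, so that $g$ restricts to a proper morphism $V\to V'$. The cross product $\Gamma\times\alpha$ is then represented on $(X\times Z)\setminus(|\Gamma|\times(Z\setminus V))$ by the class $p^\ast(\alpha)\cup\cl(\Gamma\times V)$, with $p\colon X\times V\to V$ the projection. The heart of the proof is then the identity
\[
(\id_X\times g)_\ast\bigl(p^\ast(\alpha)\cup\cl(\Gamma\times V)\bigr)=p'^\ast(g_\ast\alpha)\cup\cl(\Gamma\times V'),
\]
where $p'\colon X\times V'\to V'$. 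This combines three ingredients: flat base change for the cartesian square $g\circ p=p'\circ(\id_X\times g)$ gives $(\id_X\times g)_\ast\circ p^\ast=p'^\ast\circ g_\ast$; the projection formula for the proper morphism $\id_X\times g$ and the cup product gives $(\id_X\times g)_\ast(\xi\cup(\id_X\times g)^\ast\eta)=(\id_X\times g)_\ast\xi\cup\eta$; and the compatibility $(\id_X\times g)^\ast\cl(\Gamma\times V')=\cl(\Gamma\times V)$, which follows from the product form of the cycle together with \Cref{lem-Gysin homomorphism} (alternatively, one can run the same argument using \Cref{def-external/cross product II}, replacing the cup product with the Gysin homomorphism $\tilde\imath_\ast\colon H^i(|\Gamma|\times V,n)\to H^{i+2c}_{|\Gamma|\times V}(X\times V,n+c)$ and invoking proper base change for the square $|\Gamma|\times V\to X\times V$ over $|\Gamma|\times V'\to X\times V'$). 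A final step checks that the excision isomorphism and the forgetting-supports map commute with proper push-forward, allowing the identity to descend from cohomology with supports to the refined unramified level.

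The case $(f\times\id_W)_\ast(\Gamma\times\beta)=f_\ast\Gamma\times\beta$ runs along the same lines; the key input that replaces the compatibility of cycle classes is the identity $(f\times\id_W)_\ast\cl(\Gamma\times W)=\cl(f_\ast\Gamma\times W)$, reducing to the fact that proper push-forward on Chow groups is compatible with the cycle class under the (flat) projection to $W$.

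The main obstacle, in my view, is not any single step but the careful bookkeeping of supports: one must verify that proper push-forward of a class whose support meets only the codimension-$(j+c+2)$ strata of $X\times Z$ lands in a support condition on $X\times W$ that still yields an element of the refined unramified group, which requires arranging the opens $V, V'$ compatibly and controlling how codimension behaves under the proper map $g$. A secondary subtlety is the identity $(\id_X\times g)^\ast\cl(\Gamma\times V')=\cl(\Gamma\times V)$ when $g$ is not flat, which relies on a refined Gysin pull-back in the transverse product situation rather than on naive flat pull-back of cycles.
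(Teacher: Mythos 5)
Your proposal is correct and follows essentially the same route as the paper: the paper's single commutative diagram is exactly your two reductions run in sequence, with the same three ingredients (proper base change for the push--pull square, the projection formula for cup products with supports, and $(f\times\id)_*\cl(\Gamma\times -)=\cl(f_*\Gamma\times -)$). The one point you flag as the main obstacle --- producing $V'$ with $V=g^{-1}(V')$ --- is handled in the paper exactly as you anticipate, by first shrinking $V$ to $g^{-1}(W\setminus g(Z\setminus V))$ and using properness of $g$ to see that $g(Z\setminus V)$ is closed of codimension at least $j+2+d'$ in $W$; also note that $(\id_X\times g)^*\cl(\Gamma\times V')=\cl(\Gamma\times V)$ needs no refined Gysin pull-back, since both classes are flat pull-backs of $\cl^X_{|\Gamma|}(\Gamma)$ along the projections to $X$.
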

\begin{proof}
    For any $[\alpha]\in H^i_{j,\nr}(Z,n)$, we can find an open subset $U\subset Z$ such that $\alpha\in H^i(U,n)$ and $\codim(R)=j+2$, where $R\coloneqq Z\setminus U$. Then $g_*[\alpha]\in H^{i+2d'}(W\setminus g(R),n+d')$ is the image of $\alpha$ under the following 
    \begin{equation*}
      \phi \colon  H^i(U,n)\xrightarrow{\rm restr.}H^i(Z\setminus (g^{-1}(g(R))),n)\xrightarrow{g_*} H^{i+2d'}(W\setminus g(R),n+d'),
    \end{equation*}
    where $d'\coloneqq \dim(W)-\dim(Z)$. Similarly, we can define the morphism  $\xi$, such that $(f\times \id)_*([\beta])=[\xi(\beta)]\in H^{*+2d}_{\dagger+d,\nr}(Y\times W,\ddagger+d)$ for every $\beta\in H^*_{\dagger,\nr}(X\times W,\ddagger)$, where $d\coloneqq \dim(Y)-\dim(X)$. Write $|\Gamma|\coloneqq {\rm Supp}(\Gamma)$. Now, this lemma is given by the commutative diagram \ref{diagram-lem-cross product for proper push}, where $m\coloneqq c+d+d'$, $p\colon X\times Z\to Z$, $q\colon Y\times W\to W$ and $\pi\colon X\times W\to W$. Noting that $$(f\times \id)_*(\cl^{X\times (W\setminus g(R))}_{|\Gamma|\times (W\setminus g(R))}(\Gamma\times (W\setminus g(R))))=\cl^{Y\times (W\setminus g(R))}_{f|\Gamma|\times (W\setminus g(R))}(f_*(\Gamma)\times (W\setminus g(R))),$$
    every square is commutative by functorial properties and/or projection formulas for cup products of cohomology with supports.

    Note that the image of $\alpha$ under the composition of all left vertical arrows and all bottom horizontal arrows is the element representing $(\id\times g)_*\circ (f\times \id)_*([\Gamma]\times [\alpha])=(f\times g)_*([\Gamma]\times [\alpha])$, and the image of $\alpha$ under composition of all top arrows and all right arrows is the element representing $f_*[\Gamma]\times g_*[\alpha]$. Hence $(f\times g)_*([\Gamma]\times [\alpha])=f_*[\Gamma]\times g_*[\alpha]$ for any $[\Gamma]\in \CH^c(X)$ and any $[\alpha]\in H^i_{j,\nr}(Z,n)$.
      \begin{equation*}\label{diagram-lem-cross product for proper push}
    \adjustbox{width=\textwidth}{
      \hspace{-9pt}  \xymatrix{H^i(U,n)\ar[rr]^{\phi\phantom{ddxxxxd}}\ar[d]^{p^*}&&H^{i+2d'}(W\setminus g(R),n+d')\ar@{=}[rr]\ar[d]^{\pi^*}&&H^{i+2d'}(W\setminus g(R),n+d')\ar[d]^{q^*}\\
        H^i(X \times U,n)\ar[rr]^{\phi\phantom{ddd}}\ar[d]^{\cl^{X\times U}_{|\Gamma|\times U}(\Gamma\times U)}&&H^{i+2d'}(X\times (W\setminus g(R)),n+d')\ar[d]_{\cl^{X\times (W\setminus g(R))}_{|\Gamma|\times (W\setminus g(R))}(\Gamma\times (W\setminus g(R)))}&&H^{i+2d'}(Y\times (W\setminus g(R)),n+d')\ar[d]_{(f\times \id)_*(\cl^{X\times (W\setminus g(R))}_{|\Gamma|\times (W\setminus g(R))}(\Gamma\times (W\setminus g(R))))}\ar[ll]_{(f\times \id)^*}\\
        H^{i+2c}_{|\Gamma|\times U}(X \times U,n+c)\ar[rr]^{\phi\phantom{ddd}}\ar[d]^{\sim}_{\rm exc}&&H^{i+2d'+2c}_{|\Gamma|\times (W\setminus g(R))}(X\times (W\setminus g(R)),n+d'+c)\ar[rr]^{\xi}\ar[d]^{\sim}_{\rm exc}&&H^{i+2m}_{f(|\Gamma|)\times (W\setminus g(R))}(Y\times (W\setminus g(R)),n+m)\ar[d]^{\sim}_{\rm exc}\\
        H^{i+2c}_{|\Gamma|\times U}(X \times Z\setminus (|\Gamma|\times R),n+c)\ar[rr]^{\phi\phantom{ddddd}}\ar[d]^{\iota_*}&&H^{i+2d'+2c}_{|\Gamma|\times (W\setminus g(R))}(X\times W\setminus(|\Gamma|\times g(R)),n+d'+c)\ar[rr]^{\xi}\ar[d]^{\iota_*}&&H^{i+2m}_{f(|\Gamma|)\times (W\setminus g(R))}(Y\times W\setminus(f(|\Gamma|)\times g(R)),n+m)\ar[d]^{\iota_*}\\
         H^{i+2c}(X \times Z\setminus (|\Gamma|\times R),n+c)\ar[rr]^{\phi\phantom{dddddd}}&&H^{i+2d'+2c}(X\times W\setminus(|\Gamma|\times g(R)),n+d'+c)\ar[rr]^{\xi}&&H^{i+2m}(Y\times W\setminus(f(|\Gamma|)\times g(R)),n+m)
        }}\tag{Diagram \ref{lem-cross product for proper push}}
    \end{equation*}
\end{proof}
\begin{lem}\label{lem-cross product for pull back}
 Let $f\colon X\to Y$ and $g\colon Z\to W$ be morphisms between smooth schemes. Then $(f\times g)^*([\Gamma]\times [\alpha])=f^*[\Gamma]\times g^*[\alpha]$ for any $[\Gamma]\in \CH^c(Y)$ and any $[\alpha]\in H^i_{j,\nr}(W,n)$.   
\end{lem}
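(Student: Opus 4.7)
The plan is to reduce to elementary cases and then verify compatibility step by step, in analogy with the proof of \Cref{lem-cross product for proper push}. Using the factorization $f\times g=(f\times\id_W)\circ(\id_X\times g)$ together with the functoriality of pullback (\Cref{thm-functoriality}), the assertion splits into the two statements $(f\times\id_W)^\ast([\Gamma]\times[\alpha])=f^\ast[\Gamma]\times[\alpha]$ and $(\id_Y\times g)^\ast([\Gamma]\times[\alpha])=[\Gamma]\times g^\ast[\alpha]$; by symmetry it suffices to treat the first. Writing $f\colon X\to Y$ as the graph embedding $X\hookrightarrow X\times Y$ followed by the flat projection $X\times Y\to Y$ and invoking \Cref{thm-functoriality} once more, we reduce further to two sub-cases: $f$ flat and $f$ a closed immersion between smooth schemes (hence a regular embedding).

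The flat sub-case is a direct diagram chase. Writing out \Cref{def-external/cross product I} one obtains a diagram analogous to \ref{diagram-lem-cross product for proper push} with flat pullbacks in place of proper push-forwards, and every square commutes by the naturality of flat pullback with respect to cohomology with supports, cup product, excision and $\iota_\ast$, together with the identity $(f\times\id)^\ast\cl^{Y\times U}_{|\Gamma|\times U}(\Gamma\times U)=\cl^{X\times U}_{|f^\ast\Gamma|\times U}(f^\ast\Gamma\times U)$ valid for flat $f$.

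For the regular-embedding case, $f\times\id_W\colon X\times W\hookrightarrow Y\times W$ is again a regular embedding whose deformation space is $D(Y,X)\times W$ and whose normal bundle is $N_XY\times W$. By \Cref{lem-functorility for redular embedding}, $(f\times\id_W)^\ast$ can be computed directly as $r\circ(-\delta)\circ\{t\}\circ\pi^\ast$, where $r$ denotes the inverse of $\mathbb{A}^1$-homotopy invariance, $\pi\colon(Y\times W)\times\G_m\to Y\times W$ is the projection, and $\delta$ is the boundary attached to $N_XY\times W\hookrightarrow D(Y,X)\times W$. The strategy is to verify that the cross product with $[\Gamma]\in\CH^c(Y)$ on the $Y\times W$-side intertwines this four-step composition with the corresponding composition defining $f^\ast[\Gamma]\in\CH^c(X)$ (in the Chow group) crossed with $[\alpha]$: compatibility with $\pi^\ast$ reduces to the flat sub-case; compatibility with $\{t\}\cup$ is the cup-product bookkeeping already used in the proof of \Cref{prop-composition-cross}; and compatibility with the inverse of $\mathbb{A}^1$-homotopy invariance again reduces to the flat case via the bundle projection.

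The main obstacle is compatibility with the boundary morphism $-\delta$. This will be handled by a diagram chase formally identical to the one in the proof of \Cref{prop-exact sequence is compatible with cross product}, exploiting three points: (i) the support of the cross product with $\Gamma$ on $(Y\times W)\times\G_m$ is contained in $|\Gamma|\times W\times\G_m$, so that excision is natural with respect to specialising from the generic fiber $(Y\times W)\times\G_m$ to the zero fiber $N_XY\times W$ of the deformation space; (ii) cup product commutes with the localization residue up to the familiar sign (as in the proofs of \Cref{lem-lem11.7} and \Cref{prop-exact sequence is compatible with cross product}); and (iii) the cycle class $\cl(\Gamma\times W)$ extends to the total deformation space and restricts to $\cl(\Gamma\times W)$ on the generic fiber and to $\cl(f^\ast\Gamma\times W)$ on the zero fiber, which is precisely the translation of the fact that $f^\ast$ on Chow groups coincides with Fulton's specialization to the normal cone. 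Running these compatibilities through the four-step composition yields $(f\times\id_W)^\ast([\Gamma]\times[\alpha])=f^\ast[\Gamma]\times[\alpha]$, and combined with the symmetric statement for $\id_Y\times g$ this proves the lemma.
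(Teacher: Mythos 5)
Your proposal is correct and follows essentially the same route as the paper: reduce via \Cref{thm-functoriality} to the two statements for $f\times\id_W$ and $\id_Y\times g$, reduce further to the flat and regular-embedding cases, and then chase the deformation-to-the-normal-cone diagram through $\pi^*$, $\{t\}\cup$, the boundary $-\delta$ and excision, using the compatibility of cup product with localization residues and the fact that the closure of $\Gamma\times\G_m$ in the deformation space restricts on the zero fiber to (the pullback to the normal bundle of) $f^*\Gamma$, which is Fulton's specialization description of the Chow pullback. The one imprecision is the claim that the two sub-statements follow from each other ``by symmetry'': since $\Gamma$ is a Chow cycle and $\alpha$ a cohomology class the two cases are genuinely different and the paper proves each with its own diagram --- but you treat the harder one (where $\Gamma$ must be specialized to the normal cone), and the $\id_Y\times g$ case is an easier instance of the same diagram in which the cycle class is merely carried along, so this does not affect the substance of the argument.
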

\begin{proof}
Let $[\Gamma]\in \CH^c(Y)$ and $[\alpha]\in H^i_{j,\nr}(W,n)$. This lemma follows if we can show the following two equations:
\begin{equation*}
    \begin{split}
        (\id \times g)^*([\Gamma]\times [\alpha])=[\Gamma]\times g^*[\alpha],\\
        (f\times \id)^*([\Gamma]\times [\alpha])=f^*[\Gamma]\times [\alpha].
    \end{split}
\end{equation*}
After all, using \Cref{thm-functoriality}, these equations imply $(f\times g)^*([\Gamma]\times [\alpha])=(f\times \id)^*([\Gamma]\times g^*[\alpha])=f^*[\Gamma]\times g^*[\alpha]$. 

Furthermore, we may assume that both $f$ and $g$ are regular embeddings, as the equations hold true for flat pullbacks already.

First, let us show $(\id \times g)^*([\Gamma]\times [\alpha])=[\Gamma]\times g^*[\alpha]$. For the given $[\alpha]\in H^i_{j,\nr}(W,n)$, there exists an open subset $U\subset W$ such that $\codim(R)=j+2$ and $\alpha\in H^i(U,n)$, where $R\coloneqq W\setminus U$. Let $\overline{R}$ be the closure of $R\times \G_m$ in $D(W,Z)$. Write $|\Gamma|\coloneqq {\rm Supp}(\Gamma)$ and remember that for flat pullbacks this equation holds naturally. Now, we have the following commutative diagram 
\begin{equation*}
\adjustbox{scale=.6}{
    \xymatrix{H^i(U\times \G_m,n)\ar[rr]^{\{t\}}\ar[d]^{p^*}&&H^{i+1}(U\times \G_m,n+1)\ar[rr]^{-\partial}\ar[d]^{p^*}&&H^{i+2}_{N_ZW\setminus \overline{R}}(D(W,Z)\setminus \overline{R},n+1)\ar[d]^{p^*}\\
    H^i(Y\times U\times \G_m,n)\ar[rr]^{\{t\}}\ar[d]^{\cl^{Y\times U\times \G_m}_{|\Gamma|\times U\times \G_m}(\Gamma\times U\times \G_m)}&&H^{i+1}(Y\times U\times \G_m,n+1)\ar[rr]^{-\partial}\ar[d]^{\cl^{Y\times U\times \G_m}_{|\Gamma|\times U\times \G_m}(\Gamma\times U\times \G_m)}&&H^{i+2}_{Y\times N_ZW\setminus \overline{R}}(Y\times D(W,Z)\setminus \overline{R},n+1)\ar[d]^{\cl^{Y\times D(W,Z)\setminus \overline{R}}_{|\Gamma|\times N_ZW\setminus \overline{R}}(\Gamma\times N_ZW\setminus \overline{R})}\\
   H^{i+2c}_{|\Gamma|\times U\times \G_m}(Y\times U\times \G_m,n+c)\ar[rr]^{\{t\}}\ar[d]^{\sim}_{\rm exc}&&H^{i+1+2c}_{|\Gamma|\times U\times \G_m}(Y\times U\times \G_m,n+1+c)\ar[rr]^{-\partial}\ar[d]^{\sim}_{\rm exc}&&H^{i+2+2c}_{|\Gamma|\times N_ZW\setminus \overline{R}}(Y\times D(W,Z)\setminus \overline{R},n+1+c)\ar[d]^{\sim}_{\rm exc}\\
    H^{i+2c}_{|\Gamma|\times U\times \G_m}((Y\times W\setminus |\Gamma|\times R)\times \G_m,n+c)\ar[rr]^{\{t\}}\ar[d]^{\iota_*}&&H^{i+1+2c}_{|\Gamma|\times U\times \G_m}((Y\times W\setminus |\Gamma|\times R)\times \G_m,n+1+c)\ar[rr]^{-\partial}\ar[d]^{\iota_*}&&H^{i+2+2c}_{|\Gamma|\times N_ZW\setminus \overline{R}}(Y\times D(W,Z)\setminus |\Gamma|\times \overline{R},n+1+c)\ar[d]^{\iota_*}\\
    H^{i+2c}((Y\times W\setminus |\Gamma|\times R)\times \G_m,n+c)\ar[rr]^{\{t\}}&&H^{i+1+2c}((Y\times W\setminus |\Gamma|\times R)\times \G_m,n+1+c)\ar[rr]^{-\partial}&&H^{i+2+2c}_{Y\times N_ZW\setminus |\Gamma|\times \overline{R}}(Y\times D(W,Z)\setminus |\Gamma|\times \overline{R},n+1+c)\nospacepunct{,}
    }}
\end{equation*}
where $p^*$ is induced by projection from $Y$. The square in the second row and second column commutes due to the following well-known fact: let $Z\subset W\subset X$ be closed subsets, then we have the following commutative diagram
\begin{equation*}
 \xymatrix{H^i_{Z}(X,n)\ar[r]^{\iota_*}\ar[d]^{\cup \alpha}&H^i_W(X,n)\ar[r]^{\hspace{-7mm}\rm restr.}\ar[d]^{\cup \alpha}&H^i_{W\setminus Z}(X\setminus Z,n)\ar[r]^{\partial}\ar[d]^{\cup \alpha|_{X\setminus Z}}&H^{i+1}_Z(X,n)\ar[d]^{\cup \alpha}\\
H^{i+j}_{Z\cap Y}(X,n+m)\ar[r]^{\iota_*}&H^{i+j}_{W\cap Y}(X,n+m)\ar[r]^{\hspace{-7mm}\rm restr.}&H^{i+j}_{(W\setminus Z)\cap Y}(X\setminus Z,n+m)\ar[r]^{\partial}&H^{i+1+j}_{Z\cap Y}(X,n+m)\nospacepunct{,}
  }   
\end{equation*}
for any $\alpha\in H^j_{Y}(X,\Z/\ell (m))$. 

Combining the above diagram with the flat pullbacks $H^i(U,n)\to H^i(U\times \G_m,n)$ and $H^{i+2c}(Y\times W\setminus |\Gamma|\times R,n+c)\to H^{i+2c}((Y\times W\setminus |\Gamma|\times R)\times \G_m,n+c)$ and applying compatibility of the purity isomorphisms, we get the following commutative diagram
\begin{equation*}
    \xymatrix{H^i(U,n)\ar[rr]^{g^*}\ar[d]^{\Gamma \times}&&H^i(N_ZW\setminus \overline{R},n)\ar[d]^{\Gamma \times} \\
    H^{i+2c}(Y\times W\setminus |\Gamma|\times R,n+c)\ar[rr]^{(\id\times g)^*}&&H^{i+2c}(Y\times N_ZW\setminus |\Gamma|\times \overline{R},n+c)\nospacepunct{.}
    }
\end{equation*}
Therefore, we have $(\id \times g)^*([\Gamma]\times [\alpha])=[\Gamma]\times g^*[\alpha]$.

It remains to show $(f \times \id)^*([\Gamma]\times [\alpha])=f^*[\Gamma]\times [\alpha]$. Let us use the same notations as above. Let $[\overline{\Gamma}]\in \CH^c(D(X,Y))$ be the closure of $[\Gamma\times \G_m]\in \CH^c(Y\times \G_m)$ and write $\overline{|\Gamma|}$ as the closure of $|\Gamma|\times \G_m$ in $D(X,Y)$ so that $\overline{|\Gamma|}={\rm Supp}(\overline{\Gamma})$. In this case, we have the following commutative diagram
\begin{equation*}
\adjustbox{scale=.6}{
    \xymatrix{H^i(U\times \G_m,n)\ar[rr]^{\{t\}}\ar[d]^{p^*}&&H^{i+1}(U\times \G_m,n+1)\ar[rr]^{-\partial}\ar[d]^{p^*}&&H^{i+2}_{U\times \{0\}}(U\times \mathbb{A}^1,n+1)\ar[d]^{p^*}\\
    H^i(Y\times U\times \G_m,n)\ar[rr]^{\{t\}}\ar[d]^{\cl^{Y\times U\times \G_m}_{|\Gamma|\times U\times \G_m}(\Gamma\times U\times \G_m)}&&H^{i+1}(Y\times U\times \G_m,n+1)\ar[rr]^{-\partial}\ar[d]^{\cl^{Y\times U\times \G_m}_{|\Gamma|\times U\times \G_m}(\Gamma\times U\times \G_m)}&&H^{i+2}_{N_XY\times U}(D(X,Y)\times U,n+1)\ar[d]^{\cl^{D(X,Y)\times U}_{(\overline{|\Gamma|}\cap N_XY)\times U}((\overline{\Gamma}|_{N_XY})\times U)}\\
   H^{i+2c}_{|\Gamma|\times U\times \G_m}(Y\times U\times \G_m,n+c)\ar[rr]^{\{t\}}\ar[d]^{\sim}_{\rm exc}&&H^{i+1+2c}_{|\Gamma|\times U\times \G_m}(Y\times U\times \G_m,n+1+c)\ar[rr]^{-\partial}\ar[d]^{\sim}_{\rm exc}&&H^{i+2+2c}_{(\overline{|\Gamma|}\cap N_XY)\times U}(D(X,Y)\times U, n+1+c)\ar[d]^{\sim}_{\rm exc}\\
    H^{i+2c}_{|\Gamma|\times U\times \G_m}((Y\times W\setminus |\Gamma|\times R)\times \G_m,n+c)\ar[rr]^{\{t\}}\ar[d]^{\iota_*}&&H^{i+1+2c}_{|\Gamma|\times U\times \G_m}((Y\times W\setminus |\Gamma|\times R)\times \G_m,n+1+c)\ar[rr]^{-\partial}\ar[d]^{\iota_*}&&H^{i+2+2c}_{(\overline{|\Gamma|}\cap N_XY)\times U}(D(Y,X)\times W\setminus \overline{|\Gamma|}\times R,n+1+c)\ar[d]^{\iota_*}\\
    H^{i+2c}((Y\times W\setminus |\Gamma|\times R)\times \G_m,n+c)\ar[rr]^{\{t\}}&&H^{i+1+2c}((Y\times W\setminus |\Gamma|\times R)\times \G_m,n+1+c)\ar[rr]^{-\partial}&&H^{i+2+2c}_{N_XY\times W\setminus \overline{|\Gamma|}\times R}(D(Y,X)\times W\setminus \overline{|\Gamma|}\times R,n+1+c)\nospacepunct{,}
    }}
\end{equation*}
where the commutativity of square in the second row and second column is also due to the compatibility of cup product and localization sequences. Note that $[\overline{\Gamma}|_{N_XY}]$ is exactly the image of $f^*[\Gamma]$ under the flat pull-back $\pi^*\colon \CH^c(X)\to \CH^c(N_XY)$ by \cite[Chapter~5]{fulton2013intersection}; and the composition of $H^i(U,n)\to H^i(U\times \G_m,n)$ with the top arrows together with the purity isomorphism gives exactly the identity $\id\colon H^i(U,n)\to H^i(U,n)$. Similar to the above, we also get a commutative diagram
\begin{equation*}
    \xymatrix{H^i(U,n)\ar@{=}[rr]\ar[d]^{\Gamma\times}&&H^i(U,n)\ar[d]^{f^*\Gamma\times}\\
    H^{i+2c}(Y\times W\setminus |\Gamma|\times R,n+c)\ar[rr]^{(f\times \id)^*}&&H^{i+2c}(N_XY\times W\setminus \overline{|\Gamma|}\times R,n+c)\nospacepunct{,}
    }
\end{equation*}
and hence $(f \times \id)^*([\Gamma]\times [\alpha])=f^*[\Gamma]\times [\alpha]$.
\end{proof}
\begin{prop}\label{prop-commutes with external product}
  Let $X$ and $Y$ be smooth equi-dimensional quasi-projective schemes defined over a field $k$, and $M\coloneqq \Z/m\Z$, where $m$ is an integer invertible in $k$. Then there is a commutative diagram of exact sequences
    \begin{equation*}
    \adjustbox{scale=.75}{
        \xymatrix{\CH^b(Y,n;M)\ar[r]^{\mathrm{cl}}\ar[d]^{[\Gamma]\times }&H^{2b-n}(Y,M(b))\ar[r]\ar[d]^{[\Gamma]\times }& H^{2b-n}_{b-n-1,\nr}(Y,M(b))\ar[r]^{\theta}\ar[d]^{[\Gamma]\times }&\CH^b(Y,n-1;M)\ar[d]^{[\Gamma]\times }\\
        \CH^{b+c}(X\times Y,n;M)\ar[r]^{\mathrm{cl}}&H^{2b-n+2c}(X\times Y,M(b+c))\ar[r]& H^{2b-n+2c}_{b-n-1+c,\nr}(X\times Y,M(b+c))\ar[r]^{\theta}&\CH^{b+c}(X\times Y,n-1;M)
        }}
    \end{equation*}   
for every $[\Gamma]\in \CH^c(X)$.
\end{prop}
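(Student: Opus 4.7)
The plan is to verify each of the three squares in the diagram separately, treating the first two as essentially formal consequences of the constructions and concentrating the real work on the connecting map square.

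For the first square, the commutativity comes from the compatibility of the higher cycle class map with external products on Chow groups and on cohomology. More precisely, by \Cref{def-external/cross product I}, the cross-product $[\Gamma]\times[\alpha]$ on refined unramified cohomology is represented, after pullback to $X\times Y$, by the cup-product of $p_Y^\ast\alpha$ with $\cl^{X\times Y}(\Gamma\times Y)=p_X^\ast\cl(\Gamma)$. Since the higher cycle class map on $\CH^b(Y,n;M)$ sends products of cycles to cup-products of their cycle classes, the first square commutes. For the second square, the map $H^{2b-n}(Y,M)\to H^{2b-n}_{b-n-1,\nr}(Y,M)$ factors through restriction to $F_{b-n}Y$; and the definition of $[\Gamma]\times$ via pullback and cup-product with $\cl^{X\times U}_{|\Gamma|\times Y}(q^\ast\Gamma|_{X\times U})$ is manifestly compatible with restriction of $\alpha$ to smaller opens $U\subset Y$, so it commutes with the restriction $H^\ast(Y,M)\to H^\ast(F_jY,M)$ for any $j$.

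The third square, involving the connecting map $\delta\colon H^{2b-n}_{b-n-1,\nr}(Y,M)\to \CH^b(Y,n-1;M)$, is the main obstacle. The approach is to use the explicit description of $\delta$ recalled in the proof of the previous proposition, namely the composition
\[
H^{2b-n}(F_{b-n}Y,M)\xrightarrow{\partial}\varinjlim_{\codim Z=b-n+1}H^{n-1}(Z,M)\xleftarrow{\sim}\varinjlim \CH^{n-1}(Z,n-1;M)\xrightarrow{\iota_\ast}\CH^b(Y,n-1;M),
\]
and to check that $[\Gamma]\times$ commutes with each of the three pieces. The compatibility with the residue map $\partial$ is the analogue of \Cref{prop-exact sequence is compatible with cross product} at the level of the $E_1$-page of the Bloch--Ogus spectral sequence, and the diagram chase is essentially identical: one pulls back to $X\times Y$, cups with $\cl^{X\times Y}(\Gamma\times Y)$, excises $|\Gamma|\times Z$, and uses the standard compatibility of cup-product with boundary maps arising from localization triangles (as recorded in the commutative diagram in the proof of \Cref{prop-exact sequence is compatible with cross product}). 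The middle isomorphism with higher Chow groups, being implemented by the higher cycle class map, commutes with external products for the same reason as the first square. The push-forward $\iota_\ast$ along closed immersions of codimension-$(b-n+1)$ subvarieties $Z\hookrightarrow Y$ commutes with $[\Gamma]\times$ by the external product analogue of \Cref{lem-cross product for proper push}, applied to the product $\id_X\times\iota\colon X\times Z\hookrightarrow X\times Y$.

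Assembling (i), (ii), (iii), the third square commutes as well. The main technical issue will be (i): one must be careful that the supports involved in defining $[\Gamma]\times\delta(\alpha)$ and $\delta([\Gamma]\times\alpha)$ are compatible, i.e.\ that the choice of open $F_{b-n}Y\subset U\subset Y$ used to represent $\alpha$ matches the choice of open $F_{b-n-1+c}(X\times Y)\subset V\subset X\times Y$ used to represent $[\Gamma]\times\alpha$; this is handled, just as in the proof of \Cref{prop-exact sequence is compatible with cross product}, by working in the canonical open $(X\times Y)\setminus(|\Gamma|\times Z)$ with $Z=Y\setminus U$, whose codimension-$(b-n+c)$ complement contains $F_{b-n-1+c}(X\times Y)$ automatically. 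Once this bookkeeping is in place, the diagram chase reduces to the compatibility of cup-product with cycle classes, localization, and Gysin maps that is already invoked throughout Section~3.
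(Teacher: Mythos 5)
Your proposal is correct and follows essentially the same route as the paper: the paper also reduces to the last square, uses the same explicit decomposition of the connecting map through $\partial$, the Borel--Moore/higher-Chow identification, and $\iota_*$, and likewise handles the only nontrivial piece (compatibility of $\partial$ with $[\Gamma]\times$) by invoking the first commutative diagram from the proof of \Cref{prop-exact sequence is compatible with cross product}. The only difference is that you spell out the first two squares, which the paper dismisses as already known.
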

\begin{proof}
  It suffices to show that the last square commutes. For every $\alpha\in H^{2b-n}_{b-n-1,\nr}(Y,M(b))$, we can find an open subset $U$ such that $Z\coloneqq Y\setminus U$ has pure codimension $b-n+1$ and $\alpha\in H^{2b-n}(U,M(b))$. By the definition of $H^{2b-n}_{b-n-1,\nr}(Y,M(b))\to \CH^b(Y,n-1;M)$ (see \cite[(27)]{kok2023higher} or the proof of \Cref{prop-Prop 2.15}), $\alpha$ maps into $\CH^b(Y,n-1;M)$ under the following morphism
  \begin{equation*}
      H^{2b-n}(U,b)\xrightarrow{\partial} H^{2b-n+1}_Z(Y,b)\xleftarrow{\sim}H^{n-1}_{BM}(Z,n-1)\xleftarrow{\sim}\CH^{n-1}(Z,n-1;M)\xrightarrow{\iota_*}\CH^b(Y,n-1;M),
  \end{equation*}
  where the second isomorphism is given by \cite[Proposition 4.9]{kok2023higher}. Noting that except for the first arrow, the commutativity with external product is already known. We finish our proof as the first arrow is compatible with external product by \ref{diagram-exact sequence is compatible with cross product1}.
\end{proof}
To end this subsection, let us introduce the \emph{action of cycles}.
\begin{define}[Action of Cycles]\label{def-action of cycles on refined unramified cohomology}
    Let $X$ be a smooth equi-dimensional algebraic scheme. We have a well-defined pairing 
    \begin{equation*}
        \CH^c(X)\otimes H^i_{j,\nr}(X,n)\xrightarrow{\times }H^{i+2c}_{j+c,\nr}(X\times X,n+c)\xrightarrow{\Delta^*} H^{i+2c}_{j+c,\nr}(X,n+c),
    \end{equation*}
    where the first arrow is the cross product and the last one is the pull-back along the diagonal. Moreover, we write the image of $(\Gamma,\alpha)$ as $\Gamma\cdot \alpha$.
\end{define}
\begin{lem}[Projection Formula II]\label{lem-projection formula II}
   Let $f\colon X\to Y$ be a morphism between smooth equi-dimensional algebraic schemes. For any $\Gamma \in \CH^c(Y)$ and $\alpha\in H^i_{j,\nr}(Y)$, $f^*(\Gamma\cdot\alpha)=(f^*\Gamma)\cdot(f^*\alpha)$. Moreover, if $f$ is proper, then we have the projection formulas:
   \begin{itemize}
       \item[(i)] for any $\Gamma\in \CH^c(X)$ and $\alpha\in H^i_{j,\nr}(Y,n)$, $f_*(\Gamma\cdot f^*(\alpha))=f_*(\Gamma)\cdot\alpha$;
       \item[(ii)] for any $\Gamma\in \CH^c(Y)$ and $\alpha\in H^i_{j,\nr}(X,n)$, $f_*(f^*(\Gamma)\cdot \alpha)=\Gamma\cdot f_*(\alpha)$. 
   \end{itemize}  
\end{lem}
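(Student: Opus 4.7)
The plan is to reduce everything to the identity $\Gamma\cdot\beta=\Delta^{\ast}(\Gamma\times\beta)$ and then chase a few commutative squares, invoking the functoriality of pullback (Theorem~\ref{thm-functoriality}) together with the compatibilities of the cross product with pullback and with proper pushforward (Lemmas~\ref{lem-cross product for pull back} and \ref{lem-cross product for proper push}). The first statement is essentially formal; the projection formulas reduce to a single base-change identity in a Cartesian square, which is the only non-trivial input.

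For the first assertion, the plan is to compute directly:
\[
f^{\ast}(\Gamma\cdot\alpha)=f^{\ast}\Delta_Y^{\ast}(\Gamma\times\alpha)
=\Delta_X^{\ast}(f\times f)^{\ast}(\Gamma\times\alpha)
=\Delta_X^{\ast}(f^{\ast}\Gamma\times f^{\ast}\alpha)
=(f^{\ast}\Gamma)\cdot(f^{\ast}\alpha),
\]
where the second equality uses $\Delta_Y\circ f=(f\times f)\circ\Delta_X$ together with Theorem~\ref{thm-functoriality}, and the third equality is Lemma~\ref{lem-cross product for pull back}.

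For the projection formulas, I would handle (ii) (the argument for (i) is entirely symmetric). First rewrite the two sides using the previous lemmas. By Lemma~\ref{lem-cross product for pull back} and Theorem~\ref{thm-functoriality},
\[
f^{\ast}\Gamma\cdot\alpha=\Delta_X^{\ast}(f\times\id_X)^{\ast}(\Gamma\times\alpha)=(f,\id_X)^{\ast}(\Gamma\times\alpha),
\]
since $(f\times\id_X)\circ\Delta_X=(f,\id_X)$. By Lemma~\ref{lem-cross product for proper push},
\[
\Gamma\cdot f_{\ast}\alpha=\Delta_Y^{\ast}(\id_Y\times f)_{\ast}(\Gamma\times\alpha).
\]
Hence (ii) is equivalent to the base-change identity
\[
f_{\ast}\circ (f,\id_X)^{\ast}=\Delta_Y^{\ast}\circ(\id_Y\times f)_{\ast}
\]
applied to $\Gamma\times\alpha$, for the Cartesian square
\[
\xymatrix{
X\ar[r]^{(f,\id_X)}\ar[d]_{f}& Y\times X\ar[d]^{\id_Y\times f}\\
Y\ar[r]^{\Delta_Y}& Y\times Y\nospacepunct{.}
}
\]
The same unwinding, applied to the mirror square with $\Gamma_f$ on top and $f\times\id_Y$ on the right, reduces (i) to $f_{\ast}\circ\Gamma_f^{\ast}=\Delta_Y^{\ast}\circ(f\times\id_Y)_{\ast}$.

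The main obstacle is therefore this base change. The right vertical map $\id_Y\times f$ is proper (since $f$ is) but generally not smooth, so Lemma~\ref{lem-projection formual for refined} does not apply. The plan is to prove the base change by unravelling the Fulton-style construction of $\Delta_Y^{\ast}$ given in \Cref{def-pullback along lci}: it is the composition of (a)~the flat pullback along $\pi\colon Y\times Y\times\mathbb{G}_m\to Y\times Y$, (b)~cup product with $\{t\}$, (c)~the boundary map $-\delta$ along $N_Y(Y\times Y)\hookrightarrow D(Y\times Y,Y)$, and (d)~the inverse of the $\mathbb{A}^1$-homotopy isomorphism. Each of these four operations commutes with the proper pushforward induced by $\id_Y\times f$: (a) by flat base change (Lemma~\ref{lem-projection formual for refined} applied to $\pi$), (b) by the projection formula for cup products under proper pushforward, (c) by functoriality of the localization sequence under proper maps (together with the fact that $D(\id_Y\times f)$ is proper over $D(Y\times Y,Y)$ and restricts correctly to normal bundles), and (d) by compatibility of pushforward with the zero-section pullback of a vector bundle. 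Composing the four compatibilities yields the desired base-change identity, and the projection formulas follow. The technical heart of the argument is the compatibility of the boundary map $\delta$ with proper pushforward in step (c), which is handled exactly as in the diagram chase of the proof of \Cref{prop-exact sequence is compatible with pullback}.
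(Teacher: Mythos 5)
Your argument is the paper's argument: the first assertion, and the reduction of (i) and (ii) to the base-change identities $f_*\circ(f,\id_X)^*=\Delta_Y^*\circ(\id_Y\times f)_*$ and $f_*\circ\Gamma_f^*=\Delta_Y^*\circ(f\times\id_Y)_*$ via \Cref{thm-functoriality}, \Cref{lem-cross product for pull back} and \Cref{lem-cross product for proper push}, are exactly what the paper writes. Where you differ is that the paper settles the base change simply by citing \Cref{lem-projection formual for refined}, and you rightly observe that this lemma, as stated, requires the pushed-forward map to be smooth as well as proper, whereas $\id_Y\times f$ (resp.\ $f\times\id_Y$) is in general only proper; you then repair this by unwinding \Cref{def-pullback along lci} and checking that each of the four constituents of $\Delta_Y^*$ commutes with the proper pushforward. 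That repair is sound: all four corners of the Cartesian square are smooth, the graph embedding is regular with $N_X(Y\times X)\cong f^*N_Y(Y\times Y)$, and $D(Y\times X,X)\cong D(Y\times Y,Y)\times_{Y\times Y}(Y\times X)$ is proper over $D(Y\times Y,Y)$, so the boundary maps do commute with pushforward; this is in effect the proof of Rost's Proposition 12.5, to which \Cref{lem-projection formual for refined} already defers. The only quibble is that in your step (a) the same smoothness caveat applies to citing \Cref{lem-projection formual for refined} for $\pi$; one should instead invoke flat base change against proper pushforward as an axiom of the underlying cohomology theory, which holds at the level of the $F_j$-filtration without further argument.
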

\begin{proof}
   Let us first show that $f^*(\Gamma\cdot\alpha)=(f^*\Gamma)\cdot(f^*\alpha)$. Applying \Cref{thm-functoriality} and \Cref{lem-cross product for pull back}, we have 
   \begin{eqnarray*}
      f^*(\Gamma\cdot\alpha)\coloneqq f^*\Delta^*_Y(\Gamma\times \alpha)=\Delta_X^*(f\times f)^*(\Gamma\times \alpha)=\Delta_X^*(f^*\Gamma\times f^*\alpha)=(f^*\Gamma)\cdot(f^*\alpha).
   \end{eqnarray*}
   Now, suppose that $f$ is proper. Then (i) is given by the equation
   \begin{eqnarray*}
      f_*(\Gamma)\cdot\alpha=\Delta_Y^*(f_*\Gamma\times \alpha)=\Delta_Y^*(f\times \id_Y)_*(\Gamma\times \alpha)=f_*\Delta_X^*(\id_X\times f)^*(\Gamma\times \alpha)=f_*(\Gamma\cdot f^*(\alpha))
   \end{eqnarray*}
   and (ii) is given by 
   \begin{eqnarray*}
       \Gamma\cdot f_*(\alpha)=\Delta_Y^*(\Gamma\times f_*(\alpha))=\Delta_Y^*(\id_Y\times f)_*(\Gamma\times \alpha)=f_*\Delta_X^*(f\times \id_X)^*(\Gamma\times \alpha)=f_*(f^*(\Gamma)\cdot \alpha),
   \end{eqnarray*}
   by applying \Cref{lem-projection formual for refined}, \Cref{lem-cross product for proper push} and \Cref{lem-cross product for pull back}.
\end{proof}

\subsection{Correspondence Action}
In this section, we define the correspondence action on refined unramified cohomology and show this action is compatible with the action on the Bloch--Ogus spectral sequences (\ref{equ-corr actions 1}). This section is inspired by \cite{fulton2013intersection} and \cite{schreieder2022moving}.
\begin{define}\label{new-def-corr-action}
Let $X$, $Y$ and $Z$ be smooth proper equi-dimensional $k$-schemes. We define the correspondence action
\[
\CH^c(X\times Y)\otimes H^p_{q,\nr}(Y\times Z,n) \to H^{p+2c-2d_Y}_{q+c-d_Y,\nr}(X\times Z, n+c-d_Y)\colon[\Gamma]\otimes\alpha\mapsto[\Gamma]_\ast\alpha,
\]
as the following composition
\begin{align*}
\CH^c(X\times Y)\otimes H^p_{q,\nr}(Y\times Z,n)\xrightarrow{p_{XY}^\ast\otimes p_{YZ}^\ast}\CH^c(X\times Y\times X)\otimes H^p_{q,\nr}(X\times Y\times Z,n)\\
\xrightarrow{\times}H^{p+2c}_{q+c}(X\times Y\times Z\times X\times Y\times Z,n+c)\xrightarrow{\Delta_{XYZ}^\ast}H^{p+2c}_{q+c,\nr}(X\times Y\times Z,n+c)\\
\xrightarrow{(p_{XZ})_\ast}H^{p+2c-2d_Y}_{q+c-d_Y,\nr}(X\times Z,n+c-d_Y).
\end{align*}
\end{define}

\begin{rem}
\rm{As in \Cref{remark compatibility}, now using \Cref{lem-cross product for proper push} and \Cref{lem-cross product for pull back}, the above construction is the same as the composition
\begin{align*}
&\CH^c(X\times Y)\otimes H^p_{q,\nr}(Y\times Z,n)\xrightarrow{\times}H^{p+2c}_{q+c,\nr}(X\times Y\times Y\times Z,n+c)\\
&\xrightarrow{(\id_X\times \Delta_Y\times\id_Z)^\ast}H^{p+2c}_{q+c,\nr}(X\times Y\times Z,n+c)\xrightarrow{(p_{XZ})_\ast}H^{p+2c-2d_Y}_{q+c-d_Y,\nr}(X\times Z,n+c-d_Y).
\end{align*}
}
\end{rem}

\begin{prop}[{cf.~\cite[Corollary 6.8]{schreieder2022moving}}]\label{prop-actions of corr}
The above construction defines a bi-additive pairing with the following properties:
\begin{itemize}
    \item[(1)] If $q'\leq q$, then the following diagram commutes    
    \begin{equation*}
        \xymatrix{\CH^c(X\times Y)\times H^p_{q,\nr}(Y\times Z,n)\ar[r]\ar[d]& H^{p+2c-2d_Y}_{q+c-d_Y,\nr}(X\times Z,n+c-d_Y)\ar[d]\\
        \CH^c(X\times Y)\times H^p_{q',\nr}(Y\times Z,n)\ar[r]& H^{p+2c-2d_Y}_{q'+c-d_Y,\nr}(X\times Z,n+c-d_Y)\nospacepunct{,}
        }
    \end{equation*}
    where the vertical maps are induced by the restriction maps.
    \item[(2)] If $q\geq \lceil p/2\rceil$, this pairing is the same as the usual action of correspondence on cohomology groups. 
    \item[(3)] Let $W$ be a smooth projective equi-dimenisonal $k$-scheme of dimension $d_W$ and let $[\Gamma']\in \CH^{c'}(W\times X)$. For any $[\alpha]\in H^p_{q,\nr}(Y\times Z,n)$,
    \begin{equation*}
        [\Gamma']_*([\Gamma]_*[\alpha])=([\Gamma']\circ[\Gamma])_*[\alpha]\in H^{p+2c+2c'-2d_X-2d_Y}_{q+c+c'-d_X-d_Y}(W\times Z,n+c+c'-d_X-d_Y),
    \end{equation*}
    where $[\Gamma']\circ[\Gamma]$ is the composition of the correspondences.
\end{itemize}
\end{prop}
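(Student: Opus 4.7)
The plan is to verify bi-additivity and then the three properties in order. Bi-additivity is automatic: all four ingredients of \Cref{new-def-corr-action} (the flat pullbacks $p_{XY}^\ast$ and $p_{YZ}^\ast$, the cross product from \Cref{def-external/cross product I}/\Cref{def-external/cross product II}, the diagonal pullback $\Delta_{XYZ}^\ast$, and the proper pushforward $(p_{XZ})_\ast$) are additive in the relevant arguments.

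For (1), the restriction map $H^p_{q,\nr}\to H^p_{q',\nr}$ is induced by enlarging the codimension of the bad locus allowed in a representative. Each operation entering \Cref{new-def-corr-action} is manifestly natural with respect to such enlargement, since pullbacks, cross products, and proper pushforwards are all defined by the same construction on the chain of opens $F_\ast$, so the diagram commutes termwise.

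For (2), in the range $q\geq\lceil p/2\rceil$ the refined unramified cohomology groups in question coincide, via the canonical comparison map, with the corresponding ordinary cohomology groups, and \Cref{lem-the same pullback} together with \Cref{pull-back ordinary cohomology} identifies our pullback with the usual cohomological pullback in this stable range. Unwinding \Cref{new-def-corr-action} using the identification of the cross product with $(q_{XY}^\ast[\Gamma])\cup(q_{YZ}^\ast[\alpha])$ coming from the projection formula \Cref{lem-Gysin homomorphism}, the composite reduces to $(p_{XZ})_\ast\bigl((q_{XY}^\ast[\Gamma])\cup(q_{YZ}^\ast[\alpha])\bigr)$, which is precisely the standard correspondence action on ordinary cohomology.

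Part (3) is the main content, and the strategy is to mimic the classical calculation of \cite[Chapter~16]{fulton2013intersection} using the toolkit built up earlier in this section. Using the alternative description from the remark preceding the proposition, namely
\[
[\Gamma]_\ast[\alpha]=(p_{XZ})_\ast\bigl((\id_X\times\Delta_Y\times\id_Z)^\ast([\Gamma]\times[\alpha])\bigr),
\]
one expands $[\Gamma']_\ast([\Gamma]_\ast[\alpha])$ into an iterated composition of cross products, diagonal pullbacks along $\Delta_X$ and $\Delta_Y$, and proper pushforwards, all of which may be viewed as operations on $W\times X\times Y\times Z$. The key identities used to shuffle these past one another are functoriality of pullback (\Cref{thm-functoriality}), compatibility of the cross product with pullback (\Cref{lem-cross product for pull back}) and with proper pushforward (\Cref{lem-cross product for proper push}), and the base change/projection formula \Cref{lem-projection formual for refined}. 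On the cycle side, the very same shuffling rewrites the pair $([\Gamma'],[\Gamma])$ as $[\Gamma']\circ[\Gamma]=(p_{WY})_\ast\bigl((p_{WX}^\ast[\Gamma'])\cdot(p_{XY}^\ast[\Gamma])\bigr)$, so after these manipulations both sides coincide. The main obstacle is purely bookkeeping: tracking all the projections from $W\times X\times Y\times Z$ and the two diagonals as they are moved past one another and invoking the compatibilities in the correct order. Once this is arranged the identity $[\Gamma']_\ast\circ[\Gamma]_\ast=([\Gamma']\circ[\Gamma])_\ast$ is formal, mirroring exactly the analogous verification for Chow groups.
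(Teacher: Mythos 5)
Your overall strategy matches the paper's for all three items: (1) is naturality of each constituent operation with respect to restriction, (2) is the identification of the pull-back and cross product with their ordinary cohomological counterparts in the stable range, and (3) is the Fulton-style shuffling of projections and diagonals using \Cref{thm-functoriality}, \Cref{lem-cross product for pull back}, \Cref{lem-cross product for proper push} and \Cref{lem-projection formual for refined}. However, in part (3) you dismiss what remains after the shuffling as ``purely bookkeeping,'' and that is where the one genuinely non-formal input hides. After rewriting both $[\Gamma']_\ast([\Gamma]_\ast[\alpha])$ and $([\Gamma']\circ[\Gamma])_\ast[\alpha]$ as push-forwards from $W\times X\times Y\times Z$ of expressions built from pulled-back cycles acting on a pulled-back class, the two expressions differ precisely by an associativity: one is of the form $(\Gamma_1'\cdot\Gamma_1)\cdot\alpha_1$ and the other of the form $\Gamma_1'\cdot(\Gamma_1\cdot\alpha_1)$. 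Via \Cref{lem-cross product for pull back} this reduces to the associativity $(\Gamma'\times\Gamma)\times\alpha=\Gamma'\times(\Gamma\times\alpha)$ of the external product, which is not a consequence of the compatibilities you list; it is a separate statement (\Cref{prop-composition-cross}) whose proof requires the cup-product compatibilities (C4e)/(C4f) and the careful choice of supports. This is exactly the point where the Chow-group analogue also needs the (nontrivial) associativity of the intersection product, so ``mirroring the verification for Chow groups'' does not make it free. Your proof is correct once you add an explicit appeal to \Cref{prop-composition-cross} (or an equivalent argument) at that step.
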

\begin{proof}
Item \emph{(1)} follows from an argument like the proof of  \Cref{lem-cross product for pull back}.

Now \emph{(2)} follows because on ordinary cohomology, the pull-back map defined in \Cref{def-pullback along lci} is the same as the ordinary pull-back (see \Cref{pull-back ordinary cohomology}). And the cross-product of \Cref{def-external/cross product I} boils down to $\Gamma \times\alpha=p_X^\ast\Gamma\cup p_Y^\ast\alpha$, which distributes along pull-back maps.

Lastly, \emph{(3)} follows from the following computations. First of all, we have
\[
\Gamma'\circ\Gamma = (p_{WY})_\ast(p_{WX}^\ast\Gamma'\cdot p_{XY}^\ast\Gamma),
\]
where we write the map $p_\dagger$ for the projections from $W\times X\times Y$. Then if we write $q_\dagger$ for the projections from $W\times Y\times Z$ and $g_1$ and $g_2$ for the projections $W\times X\times Y\times Z\to W\times X\times Y$ and  $W\times X\times Y\times Z\to W\times Y\times Z$ respectively, we compute
\begin{align*}
(\Gamma'\circ\Gamma)_\ast\alpha&=(q_{WZ})_\ast(q_{WY}^\ast(\Gamma'\circ\Gamma)\cdot q_{YZ}^\ast\alpha)\\
&=(q_{WZ})_\ast((g_2)_\ast g_1^\ast(p_{WX}^\ast\Gamma'\cdot p_{XY}^\ast\Gamma)\cdot q_{YZ}^\ast\alpha)\\
&=(q_{WZ})_\ast (g_2)_\ast(g_1^\ast(p_{WX}^\ast\Gamma'\cdot p_{XY}^\ast\Gamma)\cdot g_2^\ast q_{YZ}^\ast\alpha)\\
&=(q_{WZ})_\ast (g_2)_\ast((g_1^\ast p_{WX}^\ast\Gamma'\cdot g_1^\ast p_{XY}^\ast\Gamma)\cdot g_2^\ast q_{YZ}^\ast\alpha),
\end{align*}
using the projection formula for the action of cycles \Cref{lem-projection formula II} and proper flat base-change for Chow groups freely.

On the other hand, we compute similarly
\[
\Gamma'_\ast(\Gamma_\ast\alpha)=(s_{WZ})_\ast(s_{WX}^\ast\Gamma'\cdot s_{XZ}^\ast(\Gamma_\ast\alpha)),
\]
where we write $s_\dagger$ for the projections from $W\times X\times Z$. Now plugging in $\Gamma_\ast\alpha=(r_{XZ})_\ast(r_{XY}^\ast\Gamma\cdot r_{YZ}^\ast\alpha)$, where $r_\dagger$ is the projection from $X\times Y\times Z$, we obtain
\begin{align*}
\Gamma'_\ast(\Gamma_\ast\alpha)&=(s_{WZ})_\ast(s_{WX}^\ast\Gamma'\cdot (f_1)_\ast f_2^\ast(r_{XY}^\ast\Gamma\cdot r_{YZ}^\ast\alpha))\\
&=(s_{WZ})_\ast(f_1)_\ast(f_1^\ast s_{WX}^\ast\Gamma'\cdot(f_2^\ast r_{XY}^\ast\Gamma\cdot f_2^\ast r_{YZ}^\ast\alpha)),
\end{align*}
where $f_1$ and $f_2$ are the projections $W\times X\times Y\times Z\to W\times X\times Z$ and $W\times X\times Y\times Z\to X\times Y\times Z$ respectively.

By the functorialities of Chow groups and refined unramified cohomology with respect to proper push-forwards and pull-backs between smooth schemes (cf. \Cref{thm-functoriality}), we have the following equalities:
\begin{itemize}
    \item let $d\coloneqq d_X+d_Y$ and $m\coloneqq c+c'$, and then we have $(q_{WZ})_\ast (g_2)_\ast=(s_{WZ})_\ast(f_1)_\ast$, where $H^{p+2m}_{q+m,\nr}(W\times X\times Y\times Z,n+m)\to H^{p+2m-2d}_{q+m-d}(W\times Z,n+m-d)$;
    \item $g_2^\ast q_{YZ}^\ast=f_2^\ast r_{YZ}^\ast\colon H^p_{q,\nr}(Y\times Z,n)\to H^p_{q,\nr}(W\times X\times Y\times Z,n)$;
    \item $g_1^\ast p_{XY}^\ast=f_2^\ast r_{XY}^\ast\colon\CH^c(X\times Y)\to \CH^c(W\times X\times Y\times Z)$;
    \item $g_1^\ast p_{WX}^\ast=f_1^\ast s_{WX}^\ast\colon\CH^{c'}(W\times X)\to \CH^{c'}(W\times X\times Y\times Z)$.
\end{itemize}
So to conclude $(\Gamma'\circ\Gamma)_\ast\alpha=\Gamma'_\ast(\Gamma_\ast\alpha)$, it suffices to show the equality $(\Gamma'\cdot\Gamma)\cdot\alpha=\Gamma'\cdot(\Gamma\cdot\alpha)$ for $\Gamma'\in\CH^{c'}(X)$, $\Gamma\in\CH^c(X)$ and $\alpha\in H^p_{q,\nr}(X,n)$. Using the compatibilities of the pull-back with the cross product \Cref{lem-cross product for pull back} again, we see that we are left to show the equality $(\Gamma'\times\Gamma)\times\alpha=\Gamma'\times(\Gamma\times\alpha)$. This is precisely \Cref{prop-composition-cross}.
\end{proof}

\begin{cor}\label{rem-compatible action}
Let $X$, $Y$ and $Z$ be smooth proper $k$-schemes with dimension $d_X$, $d_Y$ and $d_Z$ respectively. For any $[\Gamma]\in \CH^c(X\times Y)$, we have the following commutative diagram
\begin{equation}
\adjustbox{max width=\textwidth}{
    \xymatrix{H^{p+q-1}_{p-2,\nr}(Y\times Z,n)\ar[r]\ar[d]^{[\Gamma]_*}&E^{p,q}_2(Y\times Z,n)\ar[r]\ar[d]^{[\Gamma]_*}&H^{p+q}_{p,\nr}(Y\times Z,n)\ar[d]^{[\Gamma]_*}\\
    H^{p+q-1+2c-2d_Y}_{p-2+c-d_Y,\nr}(X\times Z,n+c-d_Y)\ar[r]&E^{p+c-d_Y,q+c-d_Y}_2(X\times Z,n+c-d_Y)\ar[r]&H^{p+q+2c-2d_Y}_{p+c-d_Y,\nr}(X\times Z,n+c-d_Y)\nospacepunct{.}
    }
}
\end{equation}
Moreover, this commutative diagram is compatible with composition of correspondences.
\end{cor}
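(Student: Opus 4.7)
The plan is to decompose the correspondence action from \Cref{new-def-corr-action} into its three constituent operations---cross product with $[\Gamma]$, pullback along the diagonal embedding $\id_X\times\Delta_Y\times\id_Z$, and proper pushforward along $p_{XZ}$---and verify that each of them commutes with the long exact sequence (\ref{equ-the canonical long exact sequence}) from \Cref{prop-schreieder's prop 7.35}. Stacking the three resulting commutative diagrams then yields the required compatibility for $[\Gamma]_\ast$ as a whole.

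The first step is the cross-product compatibility, which is precisely \Cref{prop-exact sequence is compatible with cross product}, applied with the outer smooth variety being $X$ and the target $Y\times Z$; this gives a commutative ladder between the long exact sequences of $Y\times Z$ and $X\times Y\times Y\times Z$ whose vertical maps are $[\Gamma]\times-$ (and, on the $E_2$-level, Rost's cross product on cycle modules). The second step is pullback along the regular embedding $\id_X\times\Delta_Y\times\id_Z\colon X\times Y\times Z\hookrightarrow X\times Y\times Y\times Z$ of smooth schemes, which commutes with the long exact sequence by \Cref{prop-exact sequence is compatible with pullback}, inducing Rost's pullback $f^\bullet$ on the $E_2$-terms. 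The third step is pushforward along the smooth proper projection $p_{XZ}\colon X\times Y\times Z\to X\times Z$: on the $E_2$-level this matches Rost's proper pushforward on cycle modules from \cite[Section~4]{Ros96}, and on refined unramified cohomology the pushforward commutes with the open restriction maps by flat base change (in the form of \Cref{lem-projection formual for refined}) and with the residue maps by compatibility of proper pushforward with boundary maps in the six-functor formalism. Composing the three compatible ladders yields the required commutativity for $[\Gamma]_\ast$.

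The compatibility with composition of correspondences is then immediate from \Cref{prop-actions of corr}(3): since $[\Gamma']_\ast\circ[\Gamma]_\ast=([\Gamma']\circ[\Gamma])_\ast$ as maps on refined unramified cohomology and, by the above, each individual correspondence acts compatibly with the long exact sequence, the ladders for $[\Gamma]$ and $[\Gamma']$ paste vertically into the ladder for $[\Gamma']\circ[\Gamma]$.

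The main obstacle is step three, the pushforward compatibility for the smooth proper projection, since it is not spelled out in the excerpt. The cleanest route is to check it at the $E_1$-level of Rost's cycle complex, where Rost's proper pushforward is known to commute with the differentials; combined with the compatibility of smooth proper pushforward on étale cohomology with supports with the boundary maps of the localization sequence, this yields compatibility on the Bloch--Ogus spectral sequence and hence, through \Cref{prop-schreieder's prop 7.35}, on refined unramified cohomology. Care must be taken with the degree shifts $p+2c-2d_Y$ and $q+c-d_Y$, but these arise uniformly from the three steps (contributions $(+2c,+c)$ from cross product, $(0,0)$ from the diagonal pullback, and $(-2d_Y,-d_Y)$ from the projection), and match the indices appearing in the Corollary.
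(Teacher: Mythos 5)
Your proof takes essentially the same route as the paper: the paper's own argument is a two-line remark that the correspondence action of \Cref{new-def-corr-action} is assembled from flat pull-back, exterior product, diagonal pull-back and proper push-forward, and then cites \Cref{prop-exact sequence is compatible with pullback} and \Cref{prop-exact sequence is compatible with cross product}, exactly as you do. You are in fact slightly more careful than the paper, which leaves the compatibility of the proper push-forward $(p_{XZ})_\ast$ with the long exact sequence implicit; your sketch of that step via Rost's push-forward on the cycle complex and the compatibility of proper push-forward with localization boundaries correctly supplies what the paper omits.
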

\begin{proof}
As the correspondence action defined in \Cref{new-def-corr-action} is induced by the flat pull-back, exterior product, pull-back and proper push-forward, the result follows from \Cref{prop-exact sequence is compatible with pullback} and \Cref{prop-exact sequence is compatible with cross product}.
\end{proof}
\begin{rem}
\rm{ We remark here that every morphism mentioned in this section (e.g., pull-back between schemes schemes and exterior product) is natural with respect to the restriction maps $H^p_{q,\nr}(X,n)\to H^p_{q-1,\nr}(X,n)$. Therefore, every morphism mentioned is compatible with the long exact sequence (\ref{equ-the canonical long exact sequence}).
    
    }
\end{rem}
We end this section with a more explicit construction of \Cref{new-def-corr-action}, under the extra assumption that we work with smooth \emph{projective} $k$-schemes.
\begin{lem}\label{lem-concrete description of corr actions}
Let $X$, $Y$ and $Z$ be smooth projective $k$-schemes with dimension $d_X$, $d_Y$ and $d_Z$ respectively. Let $[\Gamma]\in \CH^c(X\times Y)$. The bi-additive pairing from \Cref{new-def-corr-action}
\begin{eqnarray*}
    \CH^c(X\times Y)\times H^p_{q,\nr}(Y\times Z,n)&\to& H^{p+2c-2d_Y}_{q+c-d_Y,\nr}(X\times Z,n+c-d_Y)\\
    ([\Gamma],[\alpha])&\mapsto&[\Gamma]_*[\alpha]
\end{eqnarray*}
can be described as follows:
\begin{itemize}
    \item [(i)] for any $[\Gamma]\in \CH^c(X\times Y)$, we can find a representive $\Gamma\in Z^c(X\times Y)$ and closed subsets $|\Gamma|$ and $W$ in $X\times Y$, where $|\Gamma|\coloneqq{\rm Supp }(\Gamma)\subset W$ and $\codim(W)=c'\in\{c,c-1\}$; 
    \item[(ii)] applying \cite[Corollary 6.5]{schreieder2022moving} to the morphism $f\colon W'\times Z\to X\times Y\times Z\to Y\times Z$ (here $W'$ is the disjoint union of the irreducible components of $W$), for any $[\alpha]\in H^p_{q,\nr}(Y\times Z,n)$, there is an open subset $U\subset Y\times Z$ with complement $R$ such that $R$ has pure codimension $q+2$ in $Y\times Z$, $f^{-1}(R)\subset W'$ has locally codimension at least $q+2$ (i.e., $W\times Z$ intersects $X\times R$ properly) and $\alpha\in H^p(U,n)$;
    \item[(iii)] let $W_R\coloneqq  (W\times Z)\cap(X\times R)$ and $R'\coloneqq  p_{XZ}^{XYZ}(W_R)\subset (X\times Z)$, and $U'\coloneqq  (X\times Z)\setminus R'$ be the complements of $R'$, then $W_R$ (\emph{resp.} $R'$) has codimension at least $c'+q+2$ (\emph{resp.} $c'+q+2-d_Y$). Moreover, $F_{q+c-d_Y}(X\times Z)\subset U'$ as $c'\in\{c,c-1\}$;
    \item[(iv)] the element $[\Gamma]_*[\alpha]$ can be represented as the image of $\alpha$ under the morphism $\Gamma(W)_*$
\begin{equation*}
\xymatrix{H^p(U,n)\ar[r]^{{p^{XYZ}_{YZ}}^*}\ar[rrrrddd]^{\Gamma(W)_*}&
H^p(X\times U,n)\ar[rrr]^{\cl^{X\times U}_{(W\times Z)\setminus W_R}(p_{XY}^*\Gamma|_{X\times U})}&&&
H^{p+2c}_{W\times Z}(X\times U,n+c)\ar[d]^{\mathrm{exc}}_{\cong}\\
&&&&H^{p+2c}_{W\times Z}((X\times Y\times Z)\setminus W_R ,n+c)\ar[d]^{\mathrm{restr.}}\\
&&&&H^{p+2c}_{W\times Z}(Y\times U',n+c)\ar[d]^{{p^{XYZ}_{XZ}}_*}\\
&&&&H^{p+2c-2d_Y}(U',n+c-d_Y)\nospacepunct{.}
  }  
\end{equation*}
\end{itemize} 
In particular, if $Z$ is a point, the correspondence action 
\begin{eqnarray*}
    \CH^c(X\times Y)\times H^p_{q,\nr}(Y,n)&\to& H^{p+2c-2d_Y}_{q+c-d_Y,\nr}(X,n+c-d_Y)\\
    ([\Gamma],[\alpha])&\mapsto&[\Gamma]_*[\alpha]
\end{eqnarray*}
coincides with Schreieder's construction in \cite[Corollary 6.8]{schreieder2022moving}.
\end{lem}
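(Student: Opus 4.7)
The plan is to unwind the definition in \Cref{new-def-corr-action} (using the equivalent description from the subsequent remark) and identify each step with the corresponding piece of the explicit formula. First I would verify that the choices in items (i)-(iii) are possible: item (i) is immediate, with $W=|\Gamma|$ and $c'=c$ (the flexibility $c'=c-1$ is included to accommodate moving within a rational-equivalence class if needed). For (ii), apply \cite[Corollary 6.5]{schreieder2022moving} to the morphism $f\colon W'\times Z\to Y\times Z$, which produces an open $U\subset Y\times Z$ with complement $R$ of pure codimension $q+2$ such that $W\times Z$ meets $X\times R$ properly. For (iii), proper intersection gives $\codim_{X\times Y\times Z}W_R\geq c'+q+2$; projecting to $X\times Z$ (which decreases codimension by at most $d_Y$) yields $\codim_{X\times Z}R'\geq c'+q+2-d_Y\geq q+c+1-d_Y$, so $F_{q+c-d_Y}(X\times Z)\subset U'$.

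Next, by the remark following \Cref{new-def-corr-action}, one has
\[
[\Gamma]_\ast[\alpha]=(p_{XZ})_\ast\circ(\id_X\times\Delta_Y\times\id_Z)^\ast([\Gamma]\times[\alpha]),
\]
where the cross product lives in $H^{p+2c}_{q+c,\nr}((X\times Y)\times(Y\times Z),n+c)$. Unwinding \Cref{def-external/cross product I} (applied to the smooth schemes $X\times Y$ and $Y\times Z$), the cross product is represented by the image of $\alpha\in H^p(U,n)$ under the successive operations: pull-back to $(X\times Y)\times U$, cup product with the cycle class of $q^\ast\Gamma$ for the projection $q\colon(X\times Y)\times(Y\times Z)\to X\times Y$, excision, and inclusion. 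Applying $(\id_X\times\Delta_Y\times\id_Z)^\ast$ then produces, via the compatibility of pull-back along a regular embedding with cup products (\Cref{lem-functorility for redular embedding} combined with the projection formula in \Cref{lem-Gysin homomorphism}), the cup product of $p_{YZ}^\ast\alpha$ with $\cl^{X\times U}_{(W\times Z)\cap(X\times U)}(p_{XY}^\ast\Gamma|_{X\times U})$ on the open $X\times U\subset X\times Y\times Z$. The final proper push-forward $(p_{XZ})_\ast$ then yields the formula in (iv), and the codimension estimates from Step 1 guarantee that the resulting class lies in the refined unramified cohomology with the indicated index.

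For the special case $Z=\mathrm{Spec}(k)$, the formula reduces directly to Schreieder's construction in \cite[Corollary 6.8]{schreieder2022moving}: his formula is likewise obtained as pull-back, cup with $\cl(\Gamma)$, excision, restriction and proper push-forward. The only thing to check is that the choices of open subset and support in the two constructions can be matched, which is exactly what \cite[Corollary 6.5]{schreieder2022moving} (invoked in Step 1) provides.

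The main obstacle is the middle step: translating the diagonal pull-back of the cross product into the cup product on $X\times Y\times Z$ at the level of cohomology with supports. This requires handling the pull-back along a regular embedding for cup products and cycle classes simultaneously, while carefully tracking excision isomorphisms and restrictions to opens with the correct codimension conditions. Without the codimension bounds of Step 1, the push-forward $(p_{XZ})_\ast$ would land in a cohomology group whose support is too small for the refined unramified cohomology $H^{p+2c-2d_Y}_{q+c-d_Y,\nr}(X\times Z,n+c-d_Y)$ to be computable, so these estimates are not merely technical but essential.
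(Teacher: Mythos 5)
Your overall strategy matches the paper's: verify the codimension estimates in (i)--(iii), unwind the cross product of \Cref{def-external/cross product I}, identify the diagonal pull-back with an explicit cup product against the cycle class of $\Gamma$ on an open subset of $X\times Y\times Z$, and push forward along $p_{XZ}$. (The paper works with the full diagonal $\Delta\colon X\times Y\times Z\hookrightarrow (X\times Y\times Z)\times(X\times Y\times Z)$ from \Cref{new-def-corr-action} rather than the partial diagonal of the remark, but that difference is cosmetic.)

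The genuine issue is in your justification of the middle step. The pull-back $\Delta^*$ (or $(\id_X\times\Delta_Y\times\id_Z)^*$) on refined unramified cohomology is defined via deformation to the normal cone (\Cref{def-pullback along lci}); the fact that it can be computed by literally restricting a representative to the diagonal is exactly \Cref{lem-the same pullback}, and that lemma applies here only because the representative of the cross product is defined on the open set $\mathcal{X}\setminus(W\times Z\times X\times R)$, which contains enough of the codimension filtration precisely because $W\times Z\times X\times R$ meets the diagonal properly --- this is where the proper-intersection condition of item (ii) actually enters the argument, not merely in guaranteeing that the output lands in the right refined unramified group. The lemmas you cite for this step, \Cref{lem-functorility for redular embedding} (well-definedness of $i^*\circ p^*$ for regular embeddings) and \Cref{lem-Gysin homomorphism} (the projection formula for Gysin maps), do not deliver this identification. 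Once $\Delta^*$ has been replaced by honest restriction, the remaining compatibility $\Delta^*(\cl(\pi^*\Gamma)\cup p^*\beta)=\cl(p_{XY}^*\Gamma|_{X\times U})\cup\beta$ is supplied by \cite[Lemma A.21]{schreieder2022moving}. With these two ingredients substituted for the ones you name, your argument coincides with the paper's.
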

\begin{proof}
First, taking $\mathcal{X}\coloneqq X\times Y\times Z\times X\times Y\times Z$, note that $W\times Z$ intersects $X\times R$ properly in $X\times Y\times Z$ if and only if $W\times Z\times X\times R$ intersects the diagonal $\Delta$ properly in $X\times Y\times Z\times X\times Y\times Z$. Therefore, for any $[\alpha]\in H^{p+2c}_{q+c,\nr}(\mathcal{X},n+c)$ such that $[\alpha]$ can be represented by  $\alpha\in H^{p+2c}(\mathcal{X}\setminus (W\times Z\times X\times R),n+c)$, $\Delta^*([\alpha])=[\Delta^*(\alpha)]$ by \Cref{lem-the same pullback}. Let $\pi\colon \mathcal{X}\to X\times Y$ be the projection of $\mathcal{X}$ onto its first two factors, and consider the commutative diagram \ref{diagram-concrete description of corr actions}. Here the second square commutes since for every $\beta\in H^p(X\times U,n)$ we have 
\begin{align*}
    \begin{split}
       &\Delta^*(\cl^{X\times Y\times Z\times X\times U}_{W\times Z\times X\times U}(\pi^*\Gamma|_{X\times Y\times Z\times X\times U})\cup {p^{XYZXYZ}_{XYZ}}^*(\beta))\\
       =&\Delta^*\circ \cl^{X\times Y\times Z\times X\times U}_{W\times Z\times X\times U}(\pi^*\Gamma|_{X\times Y\times Z\times X\times U}) \cup \Delta^*\circ {p^{XYZXYZ}_{XYZ}}^*(\beta)\\
       =&\cl^{X\times U}_{(W\times Z)\setminus W_R}(p_{XY}^*\Gamma|_{X\times U})\cup \beta
    \end{split}
\end{align*}
by \cite[Lemma A.21]{schreieder2022moving}; and the last square commutes by functorialities of cohomology theory with supports. Therefore, by \Cref{new-def-corr-action}, we get a concrete description for projective cases. 

Finally, suppose $Z$ is a point. Then one can see that the construction here is the same as the one in \cite[Section 4]{schreieder2022moving}.

\begin{equation*}\label{diagram-concrete description of corr actions}
\xymatrix{H^p(U,n)\ar[d]^{{p^{XYZ}_{YZ}}^*}\ar@{=}[rr]&&H^p(U,n)\ar[d]^{{p_{YZ}^{XYZ}}^*}\\
H^p(X\times U,n)\ar[d]^{{p_{XYZ}^{XYZXYZ}}^*}\ar@{=}[rr]&&H^p(X\times U,n)\ar[dd]^{\cl^{X\times U}_{(W\times Z)\setminus W_R}(p_{XY}^*\Gamma|_{X\times U})}\\
H^p(X\times Y\times Z\times X\times U,n)\ar[d]^{\cl^{X\times Y\times Z\times X\times U}_{W\times Z\times X\times U}(\pi^*\Gamma|_{X\times Y\times Z\times X\times U})}&\\
H^{p+2c}_{W\times Z\times X\times U}(X\times Y \times Z\times X\times U,n+c)\ar[d]^{\mathrm{exc}}_{\cong}\ar[rr]^{\Delta^*}&&H^{p+2c}_{W\times Z}(X\times U,n+c)\ar[d]^{\cong}_{\rm exc}\\
H^{p+2c}_{W\times Z\times X\times U}(\mathcal{X}\setminus (W\times Z\times X\times R) ,n+c)\ar[d]^{\iota_*}\ar[rr]^{\Delta^*}&&H^{p+2c}_{W\times Z}((X\times Y\times Z)\setminus W_R ,n+c)\ar[d]^{\rm restr.}\\
H^{p+2c}(\mathcal{X}\setminus (W\times Z\times X\times R) ,n+c)\ar[d]^{\Delta^*}&&H^{p+2c}_{W\times Z}(Y\times U',n+c)\ar[d]^{{p^{XYZ}_{XZ}}_*}\\
H^{p+2c}((X\times Y\times Z)\setminus W_R,n+c)\ar[rr]^{ {p^{XYZ}_{XZ}}_*\circ\hspace{1mm} {\rm restr.} }&&H^{p+2c-2d_Y}(U',n+c-d_Y)\nospacepunct{.}
 }\tag{Diagram \ref{lem-concrete description of corr actions}}
\end{equation*}  
\end{proof}

\section{Applications}
In this section, we give some applications related to the aforementioned results. In particular, we generalise some results in \cite[Section 3.4]{kok2023higher}. 
\subsection{Projective Bundle and Blow-Up Formula}\label{application formulas}
Let $X$ be a smooth scheme and $E\to X$ be a vector bundle of rank $r+1$ with the sheaf of sections $\mathcal{E}$. Let $\pi\colon \mathbf{P}(E)\to X$ be the corresponding projective bundle 
\begin{equation*}
    \mathbf{P}(E)=\mathrm{Proj}\bigoplus_k\mathrm{Sym}^k\mathcal{E}^*.
\end{equation*}
The relative line bundle $\mathcal{O}_{\mathbf{P}(E)}(1)$ defines an element of $\CH^1(\mathbf{P}(E))$, say $h$. For any $1\leq n\leq r$, we have $h^n\in \CH^n(\mathbf{P}(E))$. Moreover, we get a morphism $$(H^*)^n\colon H^p_{q,\nr}(X,m)\xrightarrow{\pi^*}H^p_{q,\nr}(\mathbf{P}(E),m)\xrightarrow{h^n\cdot} H^{p+2n}_{q+n,\nr}(\mathbf{P}(E),m+n),$$ using the action of cycles \Cref{def-action of cycles on refined unramified cohomology}.
\begin{prop}[Projective Bundle Formula]\label{prop-projective bundle formula}
We have an isomorphism
\begin{align}\label{equ-projective bundle formula}
    I\colon  \bigoplus_{k=0}^{r} H^{p-2k}_{q-k,\nr}(X,n-k)\xrightarrow{\sim}H^p_{q,\nr}(\P(E),n), \quad
  (\alpha_0,\dots,\alpha_r)\mapsto\sum_{k=0}^r (H^*)^k(\alpha_k),
\end{align}
where we define $(H^*)^0$ as $\pi^*$.
\end{prop}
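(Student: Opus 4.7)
The plan is to argue by descending induction on $q$, exploiting the canonical long exact sequence (\ref{equ-the canonical long exact sequence}) together with the five lemma. Applying this long exact sequence on $X$ to each shifted triple $(p-2k,q-k,n-k)$ for $0\leq k\leq r$ and taking direct sums, and comparing with the long exact sequence applied to $\mathbf{P}(E)$ at $(p,q,n)$, one obtains a commutative ladder of long exact sequences whose vertical arrows are built from $I$ and the analogous direct-sum map $I_{E_2}$ on the $E_2$-terms. Commutativity of this ladder follows from the compatibility of $\pi^\ast$ with the long exact sequence (\Cref{prop-exact sequence is compatible with pullback}) together with the compatibility of the cross product (\Cref{prop-exact sequence is compatible with cross product}); since by \Cref{def-action of cycles on refined unramified cohomology} the cycle action is $h^k\cdot(-)=\Delta^\ast\circ(h^k\times(-))$, the required compatibility of the action of $h^k$ is a formal consequence of these two.

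For the base of the induction, choose $q\geq\dim\mathbf{P}(E)$, so that $F_q\mathbf{P}(E)=\mathbf{P}(E)$ and $F_{q-k}X=X$ for all relevant $k$; the refined unramified cohomology groups then coincide with ordinary étale cohomology. Since on ordinary cohomology the cross product reduces to the external cup product (cf.\ the proof of \Cref{prop-actions of corr}(2)), the cycle action $h^k\cdot\pi^\ast\alpha$ agrees with $\pi^\ast\alpha\cup h^k$, and $I$ becomes the classical projective bundle isomorphism for étale cohomology. For the inductive step, by the five lemma it suffices to show that the induced map on $E_2$-terms
\[
I_{E_2}\colon\bigoplus_{k=0}^{r}E_2^{q-k,p-q-k}(X,n-k)\longrightarrow E_2^{q,p-q}(\mathbf{P}(E),n)
\]
is an isomorphism. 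Using the identification $E_2^{j,i}(X,n)\cong H^j(X,\mathcal{H}^i_X(n))$ and \Cref{remark compatibility}, this map agrees with the pull-back and $h^k$-action on cycle-module cohomology constructed by Rost, so the desired isomorphism is the projective bundle formula for cycle modules (\cite{Ros96}).

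The main obstacle will be the bookkeeping needed to line up, term by term, the direct sum (over $k$) of the long exact sequences on the $X$-side with the single long exact sequence on the $\mathbf{P}(E)$-side; in particular one must verify that each of the pull-back, cross product, and hence cycle-action compatibilities respects the triple shift $(p,q,n)\mapsto(p-2k,q-k,n-k)$ dictated by $h^k$. All three compatibilities are already established in the preceding sections, so no new compatibility result is required, but care is needed to assemble them consistently and to invoke the five lemma along a correctly matched pair of long exact sequences.
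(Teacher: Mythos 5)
Your proposal is correct and follows essentially the same route as the paper: a descending induction on $q$ via the five lemma applied to the ladder of long exact sequences (\ref{equ-the canonical long exact sequence}), with commutativity supplied by \Cref{prop-exact sequence is compatible with pullback} and \Cref{prop-exact sequence is compatible with cross product}, the base case given by the classical projective bundle formula on ordinary cohomology for $q$ large, and the $E_2$-term isomorphism given by the projective bundle formula for cycle modules (the paper cites \cite[Scholium 7.3]{viale1997ℋ︁} for this step).
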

\begin{proof} 
Consider the following diagram of exact sequences
\begin{equation}\label{equ-diagram of projective bundle formula}
\adjustbox{scale=.8}{
    \xymatrix{\bigoplus\limits_{k=0}^{r}E_2^{q+1-k,p-q-k-1}(X)\ar[r]\ar[d]^{\Tilde{I}}_{\sim}&\bigoplus\limits_{k=0}^{r} H^{p-2k}_{q+1-k,\nr}(X)\ar[r]\ar[d]^{I}&\bigoplus\limits_{k=0}^{r} H^{p-2k}_{q-k,\nr}(X)\ar[r]\ar[d]^{I}&\bigoplus\limits_{k=0}^{r}E_2^{q+2-k,p-q-k-1}(X)\ar[r]\ar[d]^{\Tilde{I}}_{\sim}&\bigoplus\limits_{k=0}^{r} H^{p-2k+1}_{q+2-k,\nr}(X)\ar[d]^{I}\\ 
    E_2^{q+1,p-q-1}(\mathbf{P}(E))\ar[r]&H^p_{q+1,\nr}(\mathbf{P}(E))\ar[r]&H^p_{q,\nr}(\mathbf{P}(E))\ar[r]&E_2^{q+2,p-q-1}(\mathbf{P}(E))\ar[r]&H^{p+1}_{q+2,\nr}(\mathbf{P}(E)),
    }
    }
\end{equation}
where we write $\Tilde{I}$ as the morphism defined in \cite[Scholium 7.3]{viale1997ℋ︁}. Note that (\ref{equ-diagram of projective bundle formula}) is commutative by \Cref{prop-exact sequence is compatible with pullback} and \Cref{prop-exact sequence is compatible with cross product}. 

Suppose that $q+1\geq\dim(X)+r=\dim(\mathbf{P}(E))$, then for $k=0,\dots,r$, we have $q+1-k\geq \dim(X)$, so $H^{p-2k}_{q+1-k,\nr}(X)=H^{p-2k}(X)$ and $H^p_{q+1,\nr}(\mathbf{P}(E))=H^p(\mathbf{P}(E))$. So for this $q+1$, the morphism $I$ is an isomorphism by the projective bundle formula for ordinary cohomology. Using the fact that $\tilde I$ is an isomorphism by \cite[Scholium 7.3]{viale1997ℋ︁} and an application of the five-lemma on the above diagram (\ref{equ-diagram of projective bundle formula}), we see that $I$ is an isomorphism for $q$ as well. We conclude by induction on $q$.
\end{proof}

Following the exact same argument as above but now using \cite[Proposition 7.9]{viale1997ℋ︁}, we also obtain the following \emph{blow-up formula}. Let $Y\subset X$ be a smooth closed subscheme of codimension $r+1$ and let $\tau\colon\tilde X\to X$ be the blow-up of $X$ with centre $Y$. Write $i\colon E\hookrightarrow \tilde X$ for the inclusion of the exceptional divisor, so that $\tau|_E\colon E\to Y$ becomes a projective bundle of rank $r$. 
\begin{prop}[Blow-Up Formula]\label{prop-blow-up bundle}
There is an isomorphism
\begin{align*}
H^p_{q,\nr}(X,n)\oplus\bigoplus_{k=0}^{r-1}H^{p-2k-2}_{q-k-1,\nr}(Y,n-k-1)&\xrightarrow{\sim}H^p_{q,\nr}(\tilde X,n)\colon\\
(\alpha,\alpha_0,\dots,\alpha_{r-1})&\mapsto\tau^\ast\alpha+\sum_{k=0}^{r-1}(\tilde H^k)^\ast(\alpha_k),
\end{align*}
where $(\tilde H^k)^\ast$ is defined as the composition 
\begin{equation*}
 H^p_{q,\nr}(Y,n)\xrightarrow{(H^\ast)^k}H^{p+2k}_{q+k,\nr}(E,n+k)\xrightarrow{i_\ast}H^{p+2k+2}_{q+k+1,\nr}(\tilde X,n+k+1).   
\end{equation*}
\end{prop}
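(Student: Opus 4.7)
The plan is to mirror the proof of \Cref{prop-projective bundle formula}, replacing the projective bundle formula on the $E_2$-page with the blow-up formula from \cite[Proposition 7.9]{viale1997ℋ︁}. Let $I_{\rm BU}$ denote the proposed map, and set $d\coloneqq\dim X=\dim\tilde X$. I would first check that $I_{\rm BU}$ is well defined (the index shifts match: the component $H^{p-2k-2}_{q-k-1,\nr}(Y,n-k-1)$ maps via $(H^\ast)^k$ to $H^{p-2}_{q-1,\nr}(E,n-1)$, and then the codimension-one push-forward $i_\ast$ lands in $H^p_{q,\nr}(\tilde X,n)$).

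Next I would assemble the commutative ladder of long exact sequences coming from \Cref{prop-schreieder's prop 7.35}: the top row is the direct sum of the sequence for $X$ in bidegree $(p,q)$ with the sequences for $Y$ in bidegrees $(p-2k-2,q-k-1)$ for $k=0,\dots,r-1$, and the bottom row is the sequence for $\tilde X$ in bidegree $(p,q)$. On the refined unramified positions the vertical map is $I_{\rm BU}$; on the $E_2$-positions it is the corresponding Viale map $\tilde I_{\rm BU}$, which is an isomorphism by \cite[Proposition 7.9]{viale1997ℋ︁}. Commutativity of each square reduces to \Cref{prop-exact sequence is compatible with pullback} for the summand $\tau^\ast$, to \Cref{prop-exact sequence is compatible with cross product} for the cycle action by powers of the hyperplane class $h$ on $E$, and to the analogous compatibility of the proper push-forward
\[
i_\ast\colon H^{p-2}_{q-1,\nr}(E,n-1)\to H^{p}_{q,\nr}(\tilde X,n)
\]
with \eqref{equ-the canonical long exact sequence}, which I would establish separately by arguing that $i_\ast$ is compatible with the localization sequence/niveau filtration and the purity isomorphism (equivalently, by working with the Gysin push-forward on Borel–Moore homology and transferring via the duality of \Cref{def-BM homology and etale cohomology with supports}).

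For the base case, take $q+1\geq d$; then every refined unramified cohomology group appearing in the ladder collapses to ordinary étale cohomology (as in \Cref{pull-back ordinary cohomology}), and $I_{\rm BU}$ specialises to the classical blow-up formula for étale cohomology of smooth varieties, which is an isomorphism. The five-lemma then propagates the isomorphism from $q+1$ to $q$, and downward induction on $q$ concludes.

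The main technical obstacle I anticipate is the compatibility of $i_\ast$ with the long exact sequence \eqref{equ-the canonical long exact sequence}, which is the only input not already recorded earlier in the paper. A clean way to handle it is to factor $i_\ast$ through the exterior product with the fundamental class $[E]\in\CH^1(\tilde X)$ followed by a pull-back to the diagonal on $\tilde X$, reducing the compatibility to the already-established \Cref{prop-exact sequence is compatible with pullback} and \Cref{prop-exact sequence is compatible with cross product}; after that, matching the induced map on $E_2$ with Viale's and tracking the index shifts summand by summand is routine bookkeeping.
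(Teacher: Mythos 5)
Your proposal matches the paper's proof, which is literally ``following the exact same argument'' as \Cref{prop-projective bundle formula} with \cite[Proposition 7.9]{viale1997ℋ︁} replacing the projective-bundle computation on the $E_2$-page: the same ladder of long exact sequences from \Cref{prop-schreieder's prop 7.35}, the collapse to ordinary cohomology for $q+1\geq\dim X$, and the five-lemma with downward induction on $q$. You also correctly single out the compatibility of $i_\ast$ with \eqref{equ-the canonical long exact sequence} as the one input the paper leaves implicit; of your two suggested routes, the direct one (compatibility of the proper push-forward with the localization sequences and purity) is the one to use, since factoring through $[E]\times(-)$ followed by $\Delta^\ast$ only yields the cycle action $[E]\cdot(-)$ on classes already defined on $\tilde X$ rather than the push-forward from $E$.
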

\begin{rem}
 {\rm One can also prove \Cref{prop-projective bundle formula} and \Cref{prop-blow-up bundle} by applying localization sequences \cite[Theorem 1.3]{kok2023higher} and projection formulas \Cref{lem-projection formual for refined} and \Cref{lem-projection formula II}, without using \cite{viale1997ℋ︁}. Furthermore, after defining the Picard group's action on refined unramified cohomology, one can establish a more general projective bundle formula without the need for smoothness conditions.  We leave these for interested readers.
}
\end{rem}
\subsection{Rost Nilpotence Principle}\label{application RNP}
Let $X$ be a smooth projective scheme over a field $k$, then the so-called Rost nilpotence principle (RNP for short) is the following.
\begin{conj}[Rost Nilpotence Principle.]\label{conj-Rost}
Let $X$ be a smooth projective equi-dimensional scheme over a field $k$. Then for every $\Gamma\in \mathrm{End}_k(X)=\CH^{\dim(X)}(X\times X)$ such that $\Gamma_E=0$ for some field extension $E/k$, $\Gamma$ is nilpotent as a correspondence.
\end{conj}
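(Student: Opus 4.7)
The plan is to combine the reduction in \Cref{prop-RNP} with a Galois descent argument. A standard specialization argument reduces to the case of a finite extension (spread out the rational equivalence $\Gamma_E\sim 0$ over a finitely generated $k$-subalgebra of $E$ and specialize to a closed point), and then passing to the Galois closure lets us assume $E/k$ is finite Galois with group $G$. By \Cref{prop-RNP}, proving \Cref{conj-Rost} for $X$ is then equivalent to showing that $[\Gamma]_\ast$ acts nilpotently on each $H^{2d-2}_{d-3,\nr}(X\times_k X,\Q_\ell/\Z_\ell(d))$ with nilpotence index bounded independently of the prime $\ell$.

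For the descent, consider the base-change morphism
\begin{equation*}
\pi : (X\times_k X)_E\;\cong\;X_E\times_E X_E\;\longrightarrow\;X\times_k X,
\end{equation*}
which is finite étale of degree $|G|$, so that $\pi_\ast\pi^\ast=|G|\cdot\id$ on refined unramified cohomology with $\Q_\ell/\Z_\ell$-coefficients (the standard trace identity for finite étale maps on étale cohomology propagates through the colimit defining refined unramified cohomology). Unwinding \Cref{new-def-corr-action} using \Cref{thm-functoriality}, \Cref{lem-cross product for pull back}, and \Cref{lem-projection formual for refined} shows that the correspondence action commutes with flat base-change, yielding $\pi^\ast\circ[\Gamma]_\ast=[\Gamma_E]_\ast\circ\pi^\ast$. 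Since $\Gamma_E=0$ in $\CH^d(X_E\times_E X_E)$, the action $[\Gamma_E]_\ast$ vanishes, and hence
\begin{equation*}
|G|\cdot[\Gamma]_\ast=\pi_\ast\pi^\ast\circ[\Gamma]_\ast=\pi_\ast\circ[\Gamma_E]_\ast\circ\pi^\ast=0.
\end{equation*}
By \Cref{prop-actions of corr}(3), $[\Gamma]_\ast^N=[\Gamma^N]_\ast$, and the same computation applied to each iterate gives $|G|\cdot[\Gamma^N]_\ast=0$ for all $N\geq 1$. For every prime $\ell\nmid|G|$ this immediately forces $[\Gamma]_\ast=0$, already resolving those primes with nilpotence index one.

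The main obstacle, and the reason \Cref{conj-Rost} remains open in this generality, is the handling of primes $\ell\mid|G|$. At such $\ell$ the descent only confines each $[\Gamma^N]_\ast$ to land in the $|G|$-torsion subgroup of $H^{2d-2}_{d-3,\nr}(X\times_k X,\Q_\ell/\Z_\ell(d))$, which does not by itself produce nilpotence. To close the argument one would need either a uniform-in-$\ell$ bound on the $\ell$-primary torsion of this refined unramified cohomology group, or a weight-type filtration on which $[\Gamma]_\ast$ acts strictly decreasingly. Neither is accessible from the functorial machinery developed in the present paper, and supplying it would constitute substantial genuinely new input; this is precisely why the reduction \Cref{prop-RNP} only translates into a proof of \Cref{conj-Rost} in those cases (such as smooth proper linear varieties treated in Section~4) in which the target refined unramified cohomology group vanishes or is explicitly computable, collapsing the descent step to its immediate conclusion.
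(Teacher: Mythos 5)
The statement you are trying to prove is \Cref{conj-Rost}, which the paper states as a \emph{conjecture} and does not prove; its results only reduce RNP (in characteristic zero) to a nilpotence statement on $H^{2d-2}_{d-3,\nr}(X\times_k X,\Q_\ell/\Z_\ell(d))$ via \Cref{prop-RNP}, and verify that statement for the special class of varieties with completely decomposable diagonal in \Cref{prop-surjective}. So there is no proof in the paper to compare against, and your write-up is, correctly, not a proof either: you say so yourself in the final paragraph. Judged as a partial argument, what you do is sound and consistent with the paper's strategy. The reduction to a finite Galois extension by spreading out and specializing is standard (in characteristic zero; separability would need care otherwise), the restriction--corestriction identity $\pi_\ast\pi^\ast=|G|\cdot\id$ for the finite \'etale base change $\pi$ does pass to refined unramified cohomology since $\pi^{-1}(F_jW)=F_j(W_E)$, the compatibility $\pi^\ast\circ[\Gamma]_\ast=[\Gamma_E]_\ast\circ\pi^\ast$ follows from the functoriality package of Section 2 and \Cref{prop-actions of corr}, and the conclusion $[\Gamma]_\ast=0$ for $\ell\nmid|G|$ is correct.

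The genuine gap is exactly where you place it: for $\ell\mid|G|$ the transfer argument only shows that $\im([\Gamma^{\circ N}]_\ast)$ is killed by $|G|$, and the $\ell$-primary torsion of $H^{2d-2}_{d-3,\nr}(X\times_k X,\Q_\ell/\Z_\ell(d))$ is in general large, so no nilpotence (let alone a bound on the index uniform over the finitely many bad $\ell$) follows. One small caveat you should make explicit: \Cref{prop-RNP} is proved only over fields of characteristic zero, whereas \Cref{conj-Rost} is stated over an arbitrary field, so even the equivalence you invoke at the outset is not available in full generality. In short, your attempt is a reasonable and honest analysis of why the conjecture resists the machinery of this paper, but it is not, and does not claim to be, a proof; the paper itself offers none.
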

\begin{prop}\label{prop-exact sequence for torsion cycles}
 Let $X$ be a smooth equi-dimensional projective scheme defined over a field $k$. For any prime $\ell$ invertible in $k$, there is an exact sequence
 \begin{equation}\label{equ-exact sequence for torsion cycles}
H^{2i-2}_{i-3,\nr}(X,\mathbb{Q}_{\ell}/\mathbb{Z}_{\ell}(i))\xrightarrow{f} \CH^i(X)[\ell^{\infty}]\xrightarrow{\lambda_X^i}H^{2i-1}(X,\mathbb{Q}_{\ell}/\mathbb{Z}_{\ell}(i))/M^{2i-1}(X)\to H^{2i-1}_{i-2,\nr}(X,\mathbb{Q}_{\ell}/\mathbb{Z}_{\ell}(i)),
\end{equation}
where $\lambda_X^i$ is the defined in \cite{alexandrou2023bloch} for non-algebraically closed fields and is defined in \cite{bloch1979torsion} for algebraically closed fields.
\end{prop}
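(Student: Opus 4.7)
The plan is to obtain the four-term sequence by taking the direct limit in $r$ of the exact sequence of Proposition \ref{prop-exact sequence for higher chow groups over any fields} with $M=\Z/\ell^r$, and then identifying each of the resulting terms. Filtered colimits are exact, and both \'etale cohomology of the finite type scheme $X$ and the refined unramified cohomology groups---being themselves filtered colimits of \'etale cohomologies of open subsets---commute with the directed system $\{\Z/\ell^r(i)\}_r$. Passing to $\varinjlim_r$ in \eqref{equ-exact sequence for higher chow groups over any fields} therefore yields the exact sequence
\[
H^{2i-2}_{i-3,\nr}(X,\mathbb{Q}_\ell/\mathbb{Z}_\ell(i))\to\varinjlim_r\CH^i(X,1;\Z/\ell^r)\xrightarrow{\cl}H^{2i-1}(X,\mathbb{Q}_\ell/\mathbb{Z}_\ell(i))\to H^{2i-1}_{i-2,\nr}(X,\mathbb{Q}_\ell/\mathbb{Z}_\ell(i))\to\varinjlim_r\CH^i(X,\Z/\ell^r).
\]

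Next I would invoke the universal coefficient short exact sequences for Bloch's higher Chow groups coming from $0\to\Z\xrightarrow{\ell^r}\Z\to\Z/\ell^r\to 0$. In degree $n=0$ this degenerates to $\varinjlim_r\CH^i(X,\Z/\ell^r)=\CH^i(X)\otimes\mathbb{Q}_\ell/\mathbb{Z}_\ell$ (since $\CH^i(X,-1)=0$), replacing the last term above. In degree $n=1$ it produces, after $\varinjlim_r$, the short exact sequence
\[
0\to\CH^i(X,1)\otimes\mathbb{Q}_\ell/\mathbb{Z}_\ell\to\varinjlim_r\CH^i(X,1;\Z/\ell^r)\xrightarrow{q}\CH^i(X)[\ell^\infty]\to 0.
\]
By the constructions in \cite{bloch1979torsion,alexandrou2023bloch}, the Bloch/Alexandrou map $\lambda_X^i$ is obtained by lifting a torsion class along $q$, applying $\cl$, and reducing modulo the image $M^{2i-1}(X)$ of $\CH^i(X,1)\otimes\mathbb{Q}_\ell/\mathbb{Z}_\ell$ in $H^{2i-1}(X,\mathbb{Q}_\ell/\mathbb{Z}_\ell(i))$. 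Defining $f$ as the composite of $q$ with the connecting map from $H^{2i-2}_{i-3,\nr}(X,\mathbb{Q}_\ell/\mathbb{Z}_\ell(i))$ into $\ker(\cl)$ of the previous display, a short diagram chase using the identities $\ker(\lambda_X^i)=q(\ker(\cl))$ and $M^{2i-1}(X)\subseteq\im(\cl)=\ker(\mathrm{restr.})$ yields exactness at $\CH^i(X)[\ell^\infty]$ and at $H^{2i-1}(X,\mathbb{Q}_\ell/\mathbb{Z}_\ell(i))/M^{2i-1}(X)$.

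The main obstacle is the identification appearing in the second paragraph: verifying that the map $\CH^i(X)[\ell^\infty]\to H^{2i-1}(X,\mathbb{Q}_\ell/\mathbb{Z}_\ell(i))/M^{2i-1}(X)$ induced by the higher cycle class through $q$ agrees with the Bloch/Alexandrou map $\lambda_X^i$, and that the subgroup $M^{2i-1}(X)$ used in \cite{alexandrou2023bloch} is precisely $\cl(\CH^i(X,1)\otimes\mathbb{Q}_\ell/\mathbb{Z}_\ell)$. This is a comparison between two standard presentations of Bloch's map---one via the Bockstein of $0\to\mu_{\ell^r}^{\otimes i}\to\mathbb{Q}_\ell/\mathbb{Z}_\ell(i)\xrightarrow{\ell^r}\mathbb{Q}_\ell/\mathbb{Z}_\ell(i)\to 0$ applied to the integral cycle class, and one via the universal coefficient sequence for higher Chow groups---but must be carried out by chasing both definitions through the \'etale cycle class map on Bloch's complex so that $M^{2i-1}(X)$ is matched precisely.
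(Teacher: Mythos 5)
Your proposal follows essentially the same route as the paper: pass to the colimit over $r$ of the finite-coefficient sequence of \Cref{prop-exact sequence for higher chow groups over any fields}, use the universal-coefficient/divisibility structure to produce the surjection $q\colon \CH^i(X,1;\Q_\ell/\Z_\ell)\twoheadrightarrow\CH^i(X)[\ell^\infty]$, and then chase the diagram. The one step you flag as the main obstacle---that $q$ followed by $\cl$ computes $\lambda_X^i$ modulo $M^{2i-1}(X)$, and that $M^{2i-1}(X)$ is the image of the divisible part---is exactly what the paper does not verify by hand but instead imports from \cite[Theorem 1.1]{alexandrou2024two}, which identifies $\CH^i(X,1;\Q_\ell)$ with $H^{2i-1}_L(X,\Q_\ell(i))$, shows $M^{2i-1}(X)=I^{2i-1}(X)=\im\bigl(H^{2i-1}_L(X,\Q_\ell(i))\to H^{2i-1}_L(X,\Q_\ell/\Z_\ell(i))\bigr)$, and realizes $-\lambda_X^i$ as the map induced by the Lichtenbaum cycle class on $\CH^i(X)[\ell^\infty]$; with that citation supplied, your argument is complete.
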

\begin{proof}
This extends \cite[Corollary 4.20]{kok2023higher}. Noting that if $k=\Bar{k}$, $M^{2i-1}(X)=0$, we only consider the case that $k$ is non-algebraically closed. By \cite[Theorem 1.1]{alexandrou2024two}, we have the following commutative diagram
\begin{equation*}
\adjustbox{width=\textwidth}{
\xymatrix{&\CH^i(X,1;\Q_{\ell})\ar[r]^{\sim}\ar[d]&H^{2i-1}_L(X,\Q_{\ell}(i))\ar[d]&\\
H^{2i-2}_{i-3,\nr}(X,\Q_{\ell}/\Z_{\ell}(i))\ar[r]\ar[rd]^{f}&\CH^i(X,1;\Q_{\ell}/\Z_{\ell})\ar[r]\ar@{->>}[d]&H^{2i-1}_L(X,\Q_{\ell}/\Z_{\ell}(i))\ar[r]\ar@{->>}[d]&H^{2i-1}_{i-2,\nr}(X,\Q_{\ell}/\Z_{\ell}(i))\\
&\CH^i(X)[\ell^{\infty}]\ar[r]^{-\lambda_X^i}&\frac{H^{2i-1}_L(X,\Q_{\ell}/\Z_{\ell}(i))}{M^{2i-1}(X)},&
    }}
\end{equation*}
where the subscript `$L$' denotes the so-called Lichtenbaum (or motivic \'etale) cohomology (cf. \cite[Lecture 10]{mazza2006lecture}). Note that $H^{2i-1}_L(X,\Q_\ell/\Z_\ell(i))\cong H^{2i-1}_{\e t}(X,\Q_\ell/\Z_\ell(i))$ by \cite[Theorem~10.2]{mazza2006lecture}, implying that the middle row in the diagram is exact by \cite[Theorem 1.1]{kok2023higher}. The columns are exact since $$M^{2i-1}(X)=I^{2i-1}(X)=\im(H^{2i-1}_L(X,\Q_{\ell}(i))\to H^{2i-1}_{L}(X,\Q_{\ell}/\Z_{\ell}(i)))$$ by \cite[Theorem 1.1]{alexandrou2024two}. Note that a diagram chasing implies that $M^{2i-1}(X)$ maps to zero in $H^{2i-1}_{i-2,\nr}(X,\Q_{\ell}/\Z_{\ell}(i))$ so that we can naturally define the morphism 
\[
H^{2i-1}(X,\mathbb{Q}_{\ell}/\mathbb{Z}_{\ell}(i))/M^{2i-1}(X)\to H^{2i-1}_{i-2,\nr}(X,\mathbb{Q}_{\ell}/\mathbb{Z}_{\ell}(i)).
\]
The exactness of (\ref{equ-exact sequence for torsion cycles}) follows again from a diagram chasing.
\end{proof}
\begin{lem}\label{lem-compatible on torsion chow groups}
Let $X$, $Y$ and $Z$ be smooth projective equi-dimensional schemes with $d\coloneqq\dim(Y)$, and $[\Gamma]\in \CH^c(X\times Y)$. The exact sequence (\ref{equ-exact sequence for torsion cycles}) is compatible with correspondence action $[\Gamma]_*$.
\end{lem}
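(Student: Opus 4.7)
The plan is to reduce the lemma to compatibilities already established for each constituent map in the construction of \eqref{equ-exact sequence for torsion cycles}. By \Cref{new-def-corr-action}, the correspondence action
\[
[\Gamma]_{\ast}\colon H^{p}_{q,\nr}(Y\times Z,\Q_{\ell}/\Z_{\ell}(i)) \longrightarrow H^{p+2c-2d}_{q+c-d,\nr}(X\times Z,\Q_{\ell}/\Z_{\ell}(i+c-d))
\]
is defined, and on higher Chow groups, as well as on (Lichtenbaum and \'etale) cohomology of $Y\times Z$ and $X\times Z$, it agrees with the classical correspondence action built from pull-back, cup product with a cycle class, and proper push-forward.

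First, I would verify compatibility of $[\Gamma]_{\ast}$ with the higher-Chow sequence \eqref{equ-exact sequence for higher chow groups over any fields} from \Cref{prop-exact sequence for higher chow groups over any fields}. Its construction hinges on the diagram \eqref{equ-diagram for main thm2}, whose arrows are cycle-class maps, restrictions to codimension filtrations $F_{j}X$, boundaries $\partial$ from Bloch--Ogus--type localization sequences, and Gysin push-forwards $\iota_{\ast}$ from codimension-$(i-1)$ points. Equivariance of the cycle-class maps is built into their construction, and for the restrictions, boundaries, and $\iota_{\ast}$, equivariance with respect to the correspondence action follows by combining \Cref{rem-compatible action} and \Cref{prop-commutes with external product}, which together say that the action on refined unramified cohomology, on $E_{2}$-pages of Bloch--Ogus, and on Rost's cycle modules are mutually compatible via the long exact sequence \eqref{equ-the canonical long exact sequence}.

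Second, I would transport compatibility to \eqref{equ-exact sequence for torsion cycles} via the commutative diagram used in the proof of \Cref{prop-exact sequence for torsion cycles}. The middle row of that diagram is exactly \eqref{equ-exact sequence for higher chow groups over any fields} specialised to $M = \Q_{\ell}/\Z_{\ell}$, so it is $[\Gamma]_{\ast}$-equivariant by the previous step. By \cite[Theorem 1.1]{alexandrou2024two}, the subgroup
\[
M^{2i-1}(X\times Z) \;=\; \mathrm{Im}\bigl(H^{2i-1}_{L}(X\times Z,\Q_{\ell}(i)) \to H^{2i-1}_{L}(X\times Z,\Q_{\ell}/\Z_{\ell}(i))\bigr)
\]
is the image of a correspondence-equivariant map of Lichtenbaum cohomology groups, hence $[\Gamma]_{\ast}$-stable; moreover, the canonical identification $H^{2i-1}_{L}(-,\Q_{\ell}/\Z_{\ell}(i)) \cong H^{2i-1}_{\e t}(-,\Q_{\ell}/\Z_{\ell}(i))$ respects the correspondence action. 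A diagram chase reproducing the definitions of $\lambda_{(-)}^{i}$ and $f$ will then yield equivariance of all four terms of \eqref{equ-exact sequence for torsion cycles} together with the three horizontal morphisms between them.

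The main technical hurdle is the first step, and within it the verification that the connecting morphism $H^{2i-2}_{i-3,\nr}(-,M(i)) \longrightarrow \CH^{i}(-,1;M)$ commutes with the correspondence action. Once one unravels that this map is the composition $\iota_{\ast} \circ \cl^{-1} \circ \partial$, the equivariance of each factor is already present in Section 3, so the step is essentially bookkeeping rather than genuinely new input; the remainder of the proof is then a direct diagram chase.
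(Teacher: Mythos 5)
Your proposal is correct and follows essentially the same route as the paper: the paper likewise reduces everything to the compatibility of the map $f$ (equivalently, of the connecting morphism $H^{2i-2}_{i-3,\nr}\to \CH^i(-,1;M)$) with the correspondence action and settles this by invoking \Cref{prop-commutes with external product}, with the remaining terms handled by \Cref{prop-actions of corr} and the standard equivariance of the cycle-class and comparison maps. Your additional remarks on the $[\Gamma]_*$-stability of $M^{2i-1}$ and the decomposition $\iota_*\circ\cl^{-1}\circ\partial$ are consistent with, and slightly more explicit than, what the paper writes.
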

\begin{proof}
  Applying \Cref{prop-actions of corr} and recalling the constructions of correspondence action $[\Gamma]_*$ on higher Chow groups, Lichtenbaum cohomology and \'etale cohomology with finite coefficients, it suffices to show $f$ is compatible with correspondence action. In other words, it is enough to show the following diagram commutes
  \begin{equation*}
      \xymatrix{ H^{2i-2}_{i-3,\nr}(Y\times Z,\mathbb{Q}_{\ell}/\mathbb{Z}_{\ell}(i))\ar[r]\ar[d]& \CH^i(Y\times Z,1;M)\ar[d] \\
      H^{2i-2+2c-2d}_{i-3+c-d,\nr}(X\times Z,\mathbb{Q}_{\ell}/\mathbb{Z}_{\ell}(i+c-d))\ar[r]&\CH^{i+c-d}(X\times Z,1;M).
      }
  \end{equation*}
 This is essentially \Cref{prop-commutes with external product}.
\end{proof}
\begin{prop}\label{prop-RNP}
    Let $X$ be a smooth projective equi-dimensional $d$-dimensional scheme defined over a field $k$ of characteristic zero. Let $\Gamma\in {\rm End}_k(X)$ such that $\Gamma_E=0$ for some finite Galois field extension $E/k$. The following are equivalent:
    \begin{itemize}
        \item[(1)] $\Gamma$ is nilpotent as a correspondence;
        \item[(2)] the action of the correspondence $\Gamma$ is nilpotent on the refined unramified cohomology groups $H^{2d-2}_{d-3,\nr}(X\times _kX,\Q_{\ell}/\Z_{\ell}(d))$ for any prime $\ell$, and the nilpotence index of $[\Gamma]_*$ on $H^{2d-2}_{d-3,\nr}(X\times _kX,\Q_{\ell}/\Z_{\ell}(d))$ has a common upper bound for all $\ell$;
        \item[(3)] the action of the correspondence $\Gamma$ is nilpotent on the refined unramified cohomology groups $H^{2d-1}_{d-2,\nr}(X\times _kX,\Z_{\ell}(d))$ for any prime $\ell$, and the nilpotence index of $[\Gamma]_*$ on $H^{2d-1}_{d-2,\nr}(X\times _kX,\Z_{\ell}(d))$ has a common upper bound for all $\ell$.
      \end{itemize}
Here $H^i(*,\Z_{\ell}(n))$ is Jannsen's continuous \'etale cohomology (see \cite{jannsen1988continuous}) with coefficients $(\mu_{\ell^{r}}^{\otimes  n})_r$.
  
\end{prop}
\begin{proof}
 For a standard argument like \cite[Lemma 2.3]{diaz2019rost}, we know that $\Gamma$ is torsion. By the Chinese remainder theorem, we may further assume $\Gamma\in \CH^d(X\times X)[\ell^{\infty}]$ for some prime $\ell$. 
 
 The equivalence of (2) and (3) is given by the Bockstein sequence of refined unramified cohomology by \cite[Proposition 3.16]{kok2023higher}, since the map \[[\Gamma]_*:H^{p}_{q,\nr}(X\times _kX,\Q_{\ell}(d))\to H^{p}_{q,\nr}(X\times _kX,\Q_{\ell}(d))\] is zero if $[\Gamma]$ is torsion.

Next, (1) implies (2). If there exists a positive integer $N$ such that $[\Gamma^{\circ N}]=0\in \CH^d(X\times X)$, for any prime $\ell$, the action induced by $[\Gamma^{\circ N}]_*=[\Gamma]_*^{\circ N}$ is trivial on the refined unramified cohomology groups $H^{2d-2}_{d-3,\nr}(X\times _kX,\Q_{\ell}/\Z_{\ell}(d))$ by \Cref{prop-actions of corr}(3).

Finally, we show that (2) implies (1) conversely. Assume that there exists a positive integer $N$ such that the action $[\Gamma^{\circ N}]_*=[\Gamma]_*^{\circ N}$ is trivial on $H^{2d-2}_{d-3,\nr}(X\times _kX,\Q_{\ell}/\Z_{\ell}(d))$ for any prime~$\ell$. By an argument of \emph{Hochschild--Serre spectral sequence} as in \cite[Remark 3.2]{rosenschon2018rost}, there exists an integer $N'\coloneqq N'(d)$ only depending on $d$ such that $[\Gamma^{\circ N'}]_*=[\Gamma]_*^{\circ N'}$ acts trivially on $H^{2d-1}(X\times X,\Q_{\ell}/\Z_{\ell}(d))$ and $H^{2d-1}(X\times X,\Q_{\ell}/\Z_{\ell}(d))/M^{2d-1}(X\times X)$. Applying \Cref{lem-compatible on torsion chow groups} and \Cref{prop-exact sequence for torsion cycles}, the morphism $[\Gamma^{\circ (N'+N)}]_*:\CH^d(X\times X)[\ell^{\infty}]\to \CH^d(X\times X)[\ell^{\infty}]$ is zero, which implies $[\Gamma^{\circ (N'+N+1)}]=0$.\qedhere

\end{proof}
\begin{rem}
\rm{We would like to remark here that there is actually a simpler perspective on \Cref{prop-RNP}, and we appreciate Schreieder for sharing us. For the `(2) implies (1)' part, arguments as above showing that we can find an integer $M\coloneqq N'(d)+1$ such that $[\Gamma^{\circ M}]\in \ker(\lambda_{X\times X}^d)$. Noting that $H^{2d-2}_{d-3,\nr}(X\times _kX,\Q_{\ell}/\Z_{\ell}(d))\twoheadrightarrow \ker(\lambda_{X\times X}^d)$ by \cite[Theorem 1.6]{schreieder2020refined} and is compatible with correspondence actions as \Cref{lem-compatible on torsion chow groups}. 
}
\end{rem}
\subsection{Varieties with Complete Decomposition of the Diagonal}\label{section-linear vars}
In this subsection, we study smooth projective $k$-varieties that have a \emph{complete decomposition of the diagonal}. 
\begin{define}
We say that a smooth projective $k$-variety $X$ has a complete decomposition of the diagonal if the class of the diagonal $[\Delta_X]\in \CH^d(X\times X)$ can decompose as
\begin{equation}
[\Delta_X]=\sum_{i,j}(\alpha_{i,j}\times \beta_{i,j}), \quad \text{with } \alpha_{i,j}\in \CH^{d-i}(X) \text{ and } \beta_{i,j}\in \CH^{i}(X),
\end{equation}
where $d\coloneqq\dim(X)$ and $i=0,\dots,d$.
\end{define}
A typical example of a class of varieties having this property are \emph{linear varieties}; see \cite{joshua2001algebraic,totaro2014chow}.
\begin{prop}\label{prop-surjective}
Let $X$ be a non-singular projective $k$-variety such that $\Delta_X$ has a complete decomposition. Then the restriction map $H^{p}(X,M(n))\to H^{p}_{q,\nr}(X,M(n))$ is surjective for all $p,q\geq 0$ and all $n$, where  $m$ is invertible in $k$ and $M\coloneqq \Z/m$. In particular, RNP holds for linear varieties in characteristic zero.
\end{prop}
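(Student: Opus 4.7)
The plan is to exploit the diagonal decomposition together with the identity $[\Delta_X]_\ast=\id$ on refined unramified cohomology, by showing that each summand's action $[\alpha_{i,j}\times\beta_{i,j}]_\ast$ factors through the cohomology of ${\rm Spec}(k)$, so its image lies in the image of the restriction map $H^p(X,M(m))\to H^p_{q,\nr}(X,M(m))$. Concretely, for any $\xi\in H^p_{q,\nr}(X,M(m))$, I would write
\[
\xi \;=\; [\Delta_X]_\ast\xi \;=\; \sum_{i,j}[\alpha_{i,j}\times\beta_{i,j}]_\ast\xi,
\]
and then argue that every summand is the restriction of a class in ordinary cohomology. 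The identity $[\Delta_X]_\ast=\id$ on $H^p_{q,\nr}(X,M(m))$ is standard on ordinary cohomology (the diagonal is the identity correspondence) and propagates to refined unramified cohomology through the compatibility in \Cref{rem-compatible action} applied to the long exact sequence (\ref{equ-the canonical long exact sequence}), since $[\Delta_X]_\ast$ is also the identity on the cycle module terms $E_2^{p,q}$.

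For the key factorization, let $f\colon X\to{\rm Spec}(k)$ be the structure map and let $\alpha\in\CH^{d-i}(X)$, $\beta\in\CH^{i}(X)$. Unpacking \Cref{new-def-corr-action} via the alternative description in the remark that follows it, and applying the projection formula (\Cref{lem-projection formula II}) and smooth proper base change (\Cref{lem-projection formual for refined}) to the cartesian square
\[
\xymatrix{X\times X\ar[r]^{p_2}\ar[d]_{p_1}&X\ar[d]^{f}\\X\ar[r]^{f}&{\rm Spec}(k)\nospacepunct{,}}
\]
I expect to arrive at the clean identity
\[
[\alpha\times\beta]_\ast\xi \;=\; \alpha\cdot f^\ast f_\ast(\beta\cdot\xi).
\]
Since ${\rm Spec}(k)$ is zero-dimensional, $H^\ast_{j,\nr}({\rm Spec}(k),M(\ast))$ coincides with ordinary Galois cohomology when $j\geq0$ (and vanishes otherwise), so $f_\ast(\beta\cdot\xi)$ is either zero or lies in ordinary \'etale cohomology and its $f^\ast$-pullback is an ordinary cohomology class on $X$. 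Multiplying by $\alpha$ through the action of cycles \Cref{def-action of cycles on refined unramified cohomology}, and using the naturality of cross product and diagonal pullback with respect to the restriction map (cf.~\Cref{prop-commutes with external product} and the remark following \Cref{rem-compatible action}), the class $[\alpha\times\beta]_\ast\xi$ is exhibited as the image under restriction of the ordinary cohomology class $\alpha\cup f^\ast f_\ast(\beta\cdot\xi)\in H^p(X,M(m))$. Summing over $(i,j)$ gives the surjectivity statement.

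For RNP, linear varieties are stable under products, so $X\times X$ is again linear and the first part applies to give a surjection $H^{2d-2}(X\times X,\Q_\ell/\Z_\ell(d))\twoheadrightarrow H^{2d-2}_{d-3,\nr}(X\times X,\Q_\ell/\Z_\ell(d))$ uniformly in $\ell$. A Hochschild--Serre spectral sequence argument (in the spirit of the proof of \Cref{prop-RNP}, following Rosenschon--Sawant) produces an integer $M=M(d)$, independent of $\ell$, such that whenever $\Gamma_E=0$ for some finite Galois $E/k$, the iterate $[\Gamma]_\ast^{\circ M}$ vanishes on $H^{2d-2}(X\times X,\Q_\ell/\Z_\ell(d))$ for every $\ell$. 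Naturality of the correspondence action with respect to restriction combined with the surjectivity above forces $[\Gamma]_\ast^{\circ M}=0$ on $H^{2d-2}_{d-3,\nr}(X\times X,\Q_\ell/\Z_\ell(d))$ uniformly in $\ell$, and \Cref{prop-RNP} finishes.

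The main obstacle, apart from verifying $[\Delta_X]_\ast=\id$ cleanly, will be establishing the factorization $[\alpha\times\beta]_\ast\xi=\alpha\cdot f^\ast f_\ast(\beta\cdot\xi)$ from \Cref{new-def-corr-action} and confirming that the resulting ordinary-cohomology class genuinely lifts $[\alpha\times\beta]_\ast\xi$ through the restriction map. Both reduce to assembling the already-proven projection formulas, proper base change, and the naturality of cross product, diagonal pullback and restriction; no new ingredient is required.
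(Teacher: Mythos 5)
Your proposal is correct in substance, but the key step is carried out by a genuinely different route from the paper's own proof of this proposition. The paper argues directly on representatives: it invokes \cite[Corollary 6.8(4)]{schreieder2022moving} to kill the summands with $i>q$, and for $i\leq q$ it chooses the open set $U$ with $F_{q+1}X\subset U$ so that the support of $\alpha_{i,j}$ meets $X\setminus U$ in the expected (empty) locus, whence $(\alpha_{i,j}\times\beta_{i,j})_*x$ extends to all of $X$ by the explicit description of the action in \cite[Section 4]{schreieder2022moving}. Your factorization $[\alpha\times\beta]_\ast\xi=\alpha\cdot f^\ast f_\ast(\beta\cdot\xi)$ instead routes each summand through $H^\ast_{j,\nr}(\mathrm{Spec}(k))$, using only the projection formulas (\Cref{lem-projection formula II}), base change (\Cref{lem-projection formual for refined}) and naturality under restriction; this is exactly the mechanism the paper uses one lemma later, in the proof of \Cref{lem-computation for algebraically closed field}, and it does establish surjectivity here as well: the summands with $q+i-d<0$ vanish because the refined groups of a point vanish in negative filtration degree, and the remaining ones are restrictions of ordinary classes because the refined groups of a point are independent of the filtration index for $j\geq 0$. (The vanishing range you get, $i<d-q$, differs from the paper's $i>q$ only because you absorb the other factor of the product cycle into the point; since $\Delta_X$ is symmetric this is immaterial.) Your approach buys a representative-free argument and avoids the support/codimension bookkeeping; the paper's is shorter because it leans on Schreieder's ready-made description of the action. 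The RNP deduction is the same as the paper's.

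One step you should repair: your justification of $[\Delta_X]_\ast=\id$ on $H^p_{q,\nr}(X,M(m))$. Propagating the identity from ordinary cohomology and from the $E_2$-terms through the long exact sequence (\ref{equ-the canonical long exact sequence}) does not work: a map of long exact sequences that equals the identity on two out of every three terms is an isomorphism on the third by the five lemma, but it need not be the identity there (for $x$ in the middle term you only learn that $([\Delta_X]_\ast-\id)(x)$ lifts to the preceding term). Either cite \cite[Lemma 4.4]{schreieder2022moving} together with \Cref{lem-concrete description of corr actions} (which identifies the two actions when $Z$ is a point), as the paper does, or prove it directly from the paper's tools: write $[\Delta_X]=\Delta_\ast[X]$ and compute $(p_1)_\ast(\Delta_\ast[X]\cdot p_2^\ast\xi)=(p_1)_\ast\Delta_\ast([X]\cdot\Delta^\ast p_2^\ast\xi)=(p_1\circ\Delta)_\ast(p_2\circ\Delta)^\ast\xi=\xi$ using \Cref{lem-projection formula II}(i) and \Cref{thm-functoriality}, after checking that the fundamental class $[X]\in\CH^0(X)$ acts as the identity.
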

\begin{proof}
For every $x\in H^{p}_{q,\nr}(X,M(n))$, we have $[\Delta_X]_*x=x$ by \cite[Lemma 4.4]{schreieder2022moving}, which means that $\sum_{i,j}(\alpha_{i,j}\times \beta_{i,j})_*x=x$. On the one hand, by \cite[Corollary 6.8(4)]{schreieder2022moving}, for all $i>q$, $(\alpha_{i,j}\times \beta_{i,j})_*x=0$. On the other hand, if $i\leq q$, for such $x$, there exists an open subset $U\subset X$ such that $\codim(X\setminus U)=q+2$ with $x\in H^p(U,M(n))$ and the support of $\alpha_{i,j}$ is contained in $U$, since $q+2+d-i=d+(q-i)+2>d$. Then by \cite[Section 4]{schreieder2022moving}, $(\alpha_{i,j}\times \beta_{i,j})_*x$ comes from $H^p(X,M(n))$. Thus the restriction map is surjective.

   Let $X$ be a smooth projective $d$-dimensional linear variety defined over a characteristic zero field $k$. Then $Y\coloneqq X\times X$ is also a smooth projective linear variety and $\Delta_Y$ has a complete decomposition by \cite[Corollary 4.6]{joshua2001algebraic}. Therefore, $H^p(Y,M(n))\to H^p_{q,\nr}(Y,M(n))$ is always surjective. Suppose that $\Gamma\in \CH^d(Y)$ such that $\Gamma_E=0$ for some finite Galois field extension $E/k$, then there exists $N$ independent of $\ell$ such that the action $\Gamma_*^{\circ N}$ is zero on $H^{2d-2}(Y,\Q_{\ell}/\Z_{\ell}(d))$ from a \emph{Hochschild--Serre spectral sequence} argument as in \cite[p.~427]{rosenschon2018rost}. The surjectivity above now implies that $\Gamma$ acts nilpotently with index $N$ on the refined unramified cohomology group $H^{2d-2}_{d-3,\nr}(X\times _kX,\Q_{\ell}/\Z_{\ell}(d))$ for any prime $\ell$, hence \Cref{prop-RNP} applies and we conclude that RNP holds for $X$.
\end{proof}
\begin{lem}\label{lem-computation for algebraically closed field}
   Let $k$ be an algebraically closed field and $X$ be a non-singular projective $d$-dimensional $k$-variety such that $\Delta_X$ has a complete decomposition. Let $M\coloneqq \Z/m$ where  $m$ is invertible in $k$, then
   \begin{equation*}
  H^p_{q,\nr}(X,M(b))=\left\{\begin{array}{ll}
      H^p(X,M(b)) & \text{if } p\leq 2q \text{ and } p \text{ is even; } \\
      0 & \text{otherwise. }
  \end{array} \right.
\end{equation*}
In particular, the higher cycle class map $\CH^b(X,n;M)\to H^{2b-n}(X,M(b))$ is an isomorphsim for $b,n\geq 0$.
\end{lem}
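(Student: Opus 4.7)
The plan is to combine \Cref{prop-surjective} (the surjectivity $H^p(X,M(m))\twoheadrightarrow H^p_{q,\nr}(X,M(m))$) with a rigidity argument coming from the complete diagonal decomposition, and then use semi-purity to get injectivity in the remaining range. The key identity is the following: applying the correspondence action $[\Delta_X]_\ast=\id$ to any $\bar x\in H^p(X,M(m))$ and using the projection formula of \Cref{lem-projection formula II} together with proper flat base-change for the Cartesian square relating $p_1\colon X\times X\to X$ and $\pi_X\colon X\to \mathrm{Spec}(k)$, each summand of $[\Delta_X]_\ast$ simplifies to
\[
(\alpha_{i,l}\times\beta_{i,l})_\ast\bar x \;=\; \alpha_{i,l}\cdot \pi_X^\ast (\pi_X)_\ast (\beta_{i,l}\cdot\bar x),
\]
so that $\bar x=\sum_{i,l}\alpha_{i,l}\cdot \pi_X^\ast(\pi_X)_\ast(\beta_{i,l}\cdot\bar x)$ in $H^p(X,M(m))$.

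For the vanishing cases: if $p$ is odd, then $(\pi_X)_\ast(\beta_{i,l}\cdot\bar x)$ lies in $H^{p+2i-2d}(\mathrm{Spec}(k),M(m+i-d))$, which has odd degree and therefore vanishes over the algebraically closed field $k$; hence every summand is zero, $\bar x=0$, so $H^p(X,M(m))=0$, and \Cref{prop-surjective} forces $H^p_{q,\nr}(X,M(m))=0$ for all $q$. If $p$ is even, only summands with $i=d-p/2$ can survive, and each contributes $\lambda_l\cdot \cl(\alpha_l)$ for some $\lambda_l\in M$ and $\alpha_l\in\CH^{p/2}(X)$; thus $H^p(X,M(m))$ is $M$-spanned by codimension-$p/2$ cycle classes. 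When moreover $p>2q$, we have $\codim|\alpha_l|=p/2\geq q+1$, so each $|\alpha_l|$ is disjoint from $F_qX$; taking the open $U=X\setminus\bigcup_l|\alpha_l|\supset F_qX$, each $\cl(\alpha_l)|_U$ vanishes (cycle classes are supported on their cycles), so $\bar x$ restricts to $0$ on $F_qX$ and its image in $H^p_{q,\nr}(X,M(m))$ is zero; \Cref{prop-surjective} then gives $H^p_{q,\nr}(X,M(m))=0$.

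For the remaining case $p$ even with $p\leq 2q$, surjectivity is \Cref{prop-surjective}, so it suffices to check injectivity of $H^p(X,M(m))\to H^p_{q,\nr}(X,M(m))$. This map factors through the restriction $H^p(X,M(m))\to \varinjlim_{U\supset F_qX}H^p(U,M(m))$, whose kernel at each $U$ is the image of $H^p_{X\setminus U}(X,M(m))$. Since $\codim(X\setminus U)\geq q+1$, semi-purity (Gabber/Bloch--Ogus) yields $H^p_{X\setminus U}(X,M(m))=0$ for $p<2(q+1)$, hence for $p\leq 2q$, proving injectivity. The main obstacle I anticipate is rigorously deriving the product-type identity $(\alpha\times\beta)_\ast\bar x=\alpha\cdot\pi_X^\ast(\pi_X)_\ast(\beta\cdot\bar x)$ from the definitions of \Cref{new-def-corr-action} and \Cref{lem-concrete description of corr actions}; once that is in hand, the remaining cohomological bookkeeping is straightforward. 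Finally, the \emph{in particular} statement drops out by substituting the vanishings $H^{2i-2}_{i-3,\nr}(X,M(i))=H^{2i-1}_{i-2,\nr}(X,M(i))=H^{2i}_{i-1,\nr}(X,M(i))=0$ (each falling into the odd-$p$ or $p>2q$ regime) into the exact sequence of \Cref{prop-exact sequence for higher chow groups over any fields}, which collapses to the desired cycle-class isomorphisms for $n=0,1,2$; higher $n$ follow analogously by extending the same long exact sequence, as in the framework of \cite{kok2023higher}.
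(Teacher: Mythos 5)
Your proposal is correct, but it takes a somewhat different route from the paper. The paper's proof is a single uniform argument carried out directly on $H^p_{q,\nr}(X,M(m))$: writing $x=[\Delta_X]_*x=\sum_{i,j}(\alpha_{i,j}\times\beta_{i,j})_*x$, each summand factors through ${\pi_1}_*\colon H^{p+2d-2i}_{q+d-i,\nr}(X,m+d-i)\to H^{p-2i}_{q-i,\nr}(k,m-i)$, and the refined unramified cohomology of the point (\cite[Lemma 5.1]{kok2023higher}) vanishes unless $p=2i$ and $q\geq i$; this kills the group in one stroke whenever $p$ is odd or $p>2q$, without ever invoking \Cref{prop-surjective}, and the remaining case $p\leq 2q$, $p$ even, is the general semi-purity statement \cite[Corollary 5.10]{schreieder2020refined}, exactly as in your injectivity step. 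You instead run the diagonal decomposition on ordinary cohomology, use \Cref{prop-surjective} as a bridge, and then handle the even case $p>2q$ by an explicit support argument ($|\alpha_l|$ of codimension $p/2\geq q+1$ misses $F_qX$, so the cycle classes die on $F_qX$). Both arguments are sound and rest on the same two pillars (the factorization $(\alpha\times\beta)_*=\beta\cdot\pi_2^*{\pi_1}_*(\alpha\cdot-)$, which follows from \Cref{lem-projection formula II} and \Cref{lem-cross product for proper push}/\Cref{lem-cross product for pull back}, and semi-purity); yours trades the computation of $H^*_{*,\nr}(k)$ for more case analysis and an extra dependence on \Cref{prop-surjective}. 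For the \emph{in particular} clause your treatment matches the paper's: since $k$ is algebraically closed and $X$ is quasi-projective, the full long exact sequence of \cite[Theorem 1.1]{kok2023higher} applies for all $n$, and the groups $H^{2b-n}_{b-n-1,\nr}(X,M(b))$ always fall in the vanishing range $p-2q=n+2>0$.
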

\begin{proof}
    Let $x$ be an element of $H^p_{q,\nr}(X,M(b))$, then we have  $\sum_{i,j}(\alpha_{i,j}\times \beta_{i,j})_*x=x$. By the projection formula \Cref{lem-projection formual for refined}, $(\alpha_{i,j}\times \beta_{i,j})_*x$ is the image of $x$ under the following compositions of morphisms
    \begin{equation*}
        H^p_{q,\nr}(X,b)\xrightarrow{{\alpha_{i,j}\cdot}}H^{p+2d-2i}_{q+d-i,\nr}(X,b+d-i)\xrightarrow{{\pi_1}_*}H^{p-2i}_{q-i,\nr}(k,b-i)\xrightarrow{{\pi_2}^*}H^{p-2i}_{q-i,\nr}(X,b-i)\xrightarrow{\beta_{i,j}\cdot}H^p_{q,\nr}(X,b),
    \end{equation*}
    where $\pi_1$ (resp. $\pi_2$) is the projection to the first (resp. second) factor. 
    
    Note that $H^{p-2i}_{q-i,\nr}(k,b-i)=0$ unless $q-i\geq 0$ and $p-2i=0$ by \cite[Lemma 5.1]{kok2023higher}. So unless when $2q\geq 2i=p$ and $p$ is even, we have non-trivial $H^p_{q,\nr}(X,M(b))$, and we finish our proof by \cite[Corollary 5.10]{schreieder2020refined} and \cite[Theorem 1.1]{kok2023higher}.
\end{proof}
Applying \cite[Theorem 1.6]{schreieder2020refined}, we can directly obtain that smooth projective complex linear varieties have trivial Griffiths groups.
\begin{rem}
\rm{The conclusion of \Cref{lem-computation for algebraically closed field} is the same as \cite[Theorem 5.2]{joshua2001algebraic}. Moreover, if one follows the same argument of loc.~cit.~and noting that the higher cycle class map $\CH^b(k,n;M)\xrightarrow{\cl} H^{2b-n}(k,M(b))$ is injective, the higher cycle class map $\CH^b(X,n;M)\to H^{2b-n}(X,M(b))$ is always injective for such $X$ and any field $k$. Then the long exact sequence in \cite[Theorem 1.1]{kok2023higher} splits as 
\begin{equation*}
    0\to \CH^b(X,n;M)\to H^{2b-n}(X,M(b))\to H^{2b-n}_{b-n-1,\nr}(X,M(b))\to 0,
\end{equation*}
and we also get \Cref{prop-surjective}. 
    }
\end{rem}
\bibliographystyle{alpha}
\bibliography{ref}
\end{document}